\documentclass[a4paper]{amsart}
\usepackage{mathrsfs}
\usepackage{bbm}
\usepackage{latexsym, amssymb,amsmath,amsthm}
\usepackage[all, knot]{xy}
\xyoption{arc}

\theoremstyle{plain}
 \textwidth=36pc
\textheight=53pc 
 \topmargin=0mm
\oddsidemargin=0.4cm \evensidemargin=0.4cm   \parskip=6pt
\setlength{\parindent}{0pt}

\def \To{\longrightarrow}
\def \dim{\operatorname{dim}}
\def \Q{\operatorname{Q}}
\def \P{\operatorname{P}}
\def \S{\operatorname{S}}
\def \B{\operatorname{B}}
\def \Soc{\operatorname{Soc}}
\def \id{\operatorname{id}}
\def \End{\operatorname{End}}

\def \q{\mathbbm{q}}
\def \dim{\operatorname{dim}}

\def \k{\kappa}

\def \D{\Delta}

\def \e{\varepsilon}

\def\Quslz{{\Q}\mathbf{u}_{q}(\mathfrak{sl}_{2})}
\def\uqslz{\mathbf{u}_{q}(\mathfrak{sl}_{2})}
\def\Uqslz{\mathfrak{U}_{q}(\mathfrak{sl}_{2})}

\numberwithin{equation}{section}

\newtheorem{theorem}{Theorem}[section]
\newtheorem{lemma}[theorem]{Lemma}
\newtheorem{proposition}[theorem]{Proposition}
\newtheorem{corollary}[theorem]{Corollary}

\newtheorem{remark}[theorem]{Remark}

\begin{document}
\title{Representations of the small quasi-quantum group $\Quslz$}
\author{Gongxiang Liu}
\address{Department of Mathematics, Nanjing University, Nanjing 210093, China} 
\email{gxliu@nju.edu.cn}
\author{Fred Van Oystaeyen}
\address{Department of Mathematics and Computer Science, University of Antwerp, Antwerp, Belgium}
 \email{fred.vanoystaeyen@ua.ac.be}
\author{Yinhuo Zhang}
\address{Department of Mathematics and Statistics, University of Hasselt, Hasselt, Belgium}
\email{yinhuo.zhang@uhasselt.be}

\begin{abstract}  The quasi-Frobenius-Lusztig kernel $\Quslz$
associated with $\mathfrak{sl}_{2}$ has been constructed in \cite{Liu}. In this paper we study the representations of this small quasi-quantum group. We give a complete list of non-isomorphic indecomposables and the tensor product decomposition rules for simples and projectives. A description of the Grothendieck ring is provided.
\end{abstract}
\maketitle

\section{Introduction}
In \cite{Liu} the first author introduced a quasi-Hopf version of the small quantum group $\uqslz$ and denoted it by $\Quslz$, where $q$ is an $n^2$-th primitive root of unity for some natural number $n$. It is proved in loc.cit. (see also Lemma 2.1 here) that for odd $n$, $\Quslz$ is twisted equivalent  to the Hopf algebra $\uqslz$, but for even $n$ it is not! So for even $n$ the $\Quslz$ is a new quasi-Hopf algebra. The purpose of this paper is to study the representations of this new algebra.  We restrict, for convenience, to the case $4|n$ while the general case $2|n$ will be remarked.

The representation theory of the small quantum group $\uqslz$ and the restricted quantum universal enveloping algebra $\mathfrak{U}_{q}(\mathfrak{sl}_{2})$ associated to $\mathfrak{sl}_{2}$  have been studied extensively, cf. \cite{Lus,Su,Xiao,KS}.
So for us the problem to study the finite dimensional representations of $\Quslz$ arises naturally. We will provide in this paper a complete list of non-isomorphic indecomposable modules. Here a new phenomenon appears: $\Quslz$ has no Steinberg modules (i.e. simple projective modules). Moreover, the dimensions of all simple modules are \emph{odd}. Furthermore, $\Quslz$ has some interesting ``symmetry" properties: all the blocks have the same dimension and they are Morita equivalent to one-another. Another new phenomenon of interest appears here:  the basic algebra of $\Quslz$ can be equipped with a Hopf algebra structure, whereas we can prove that this does not happen for $\uqslz$ and $\Uqslz$.

To understand $\Quslz$ further, it is helpful to study the decomposition rules of tensor products of modules, i.e.,  a version of the Clebsch-Gordan formula for $\Quslz$. After proving that the direct summands of the tensor products of two simple modules are either simple or projective, the decomposition rules for the tensor products of simples and projectives, as well as simples and simples, are given explicitly. From this, the structure of its Grothendieck ring is derived. 

The paper is organized as follows.  In Section 2, we describe all simple modules and projective modules of  $\Quslz$, and consequently the basic algebra of each block may be given by using quiver and relations. In Proposition \ref{prop2.15}, we establish a Hopf algebra structure on the basic algebra of $\Quslz$, and prove in Proposition \ref{prop2.16}  that this is not the case for $\uqslz$ and $\Uqslz$. 

Section 3 is devoted to finding a complete list of non-isomorphic indecomposable modules (Theorem \ref{thm3.1}). Finally in Section 4, we provide the decomposition rules of tensor products (Theorem \ref{thm4.8}) and determine the Grothendieck ring $K_0$ of $\Quslz$ (Theorem \ref{thm4.9}). Thus we obtained  a fairly complete description of the representation theory of $\Quslz$, highlighting new interesting phenomena when compared to the existing theory for $\uqslz$ and $\Uqslz$.

Throughout this paper, we work over a fixed algebraically closed field $k$ of characteristic zero.
\section{Simples, projectives and basic algebra }
 We recall the definition of $\Quslz$ from \cite{Liu}. Let $n$ be a natural number and
 $q$ an $n^{2}$-th primitive root of unity. Let $\q: =q^{n}$. The quasi-Hopf algebra
 $\Quslz$ is defined as follows. As an associative algebra, it is generated by four
    elements $\k,\hat{\k},E,F$ satisfying
    \begin{gather} \k^{n}=1,\;\;\hat{\k}^{n}=\k^{-2},\;\;\k\hat{\k}=\hat{\k}\k,\\
    \k E\k^{-1}=\q E,\;\;\;\;\k F\k^{-1}=\q^{-1} F,\\
    \hat{\k}E\hat{\k}^{-1}=\q q^{-2}E,\;\;\;\;\hat{\k}F\hat{\k}^{-1}=\q^{-1}q^{2}F,\\
    E^{n^{2}}=F^{n^{2}}=0,\\
    FE-q^{-1}EF=1-\k^{-1}\hat{\k}.
    \end{gather}
 Define
    \begin{equation}1_{i}:=\frac{1}{n}\sum_{j=0}^{n-1}(\q^{n-i})^{j}\k^{j},\;\;\;\;\flat=\sum_{i=0}^{n-1}q^{-i}1_{i}.\end{equation}
    The reassociator $\phi_{s}$, the comultiplication $\D$, the counit $\e$, the elements $\alpha,\beta$ and the antipode $S$
    are given as follows:
    \begin{gather} \phi_{s}=\sum_{i,j,k=0}^{n-1}\q^{-i[\frac{j+k}{n}]}1_{i}\otimes
    1_{j}\otimes 1_{k},\\
    \D(\k)=\k\otimes \k,\;\;\;\;\D(\hat{\k})=\hat{\k}\otimes \hat{\k},\\
    \D(E)=E\otimes \flat^{-1}+ \k^{-1}\otimes 1_{0}E+1\otimes \sum_{i=1}^{n-1}1_{i}E,\\
    \D(F)=F\otimes \flat+\k^{-1}\hat{\k}\otimes F\sum_{i=1}^{n-1}1_{i}+\hat{\k}\otimes F1_{0},\\
    \e(\k)=\e(\hat{\k})=1,\;\;\;\;\e(E)=\e(F)=0,\\
    \alpha=\k,\;\;\;\;\beta=1\\
    S(\k)=\k^{-1},\;\;\;\;S(\hat{\k})=\hat{\k}^{-1},\\
    S(x)=-(\k\sum_{i=1}^{n-1}1_{i}E+\k^{2}1_{0}E)\flat \k^{-1},\\
    \;\;S(F)=-(\k^{2}\hat{\k}^{-1}F\sum_{i=1}^{n-1}1_{i}
    +\k\hat{\k}^{-1}F1_{0})\flat^{-1}\k^{-1}.
    \end{gather}

    Combining \cite[Thm. 4.3]{EG} and \cite[Thm. 4.1]{Liu}, we have the following.
    
\begin{lemma}\label{l2.1} \emph{(1)} If $n$ is odd, then $\Quslz$ is twist equivalent to the Hopf algebra $\uqslz$.

\emph{(2)} If $n$ is even, then $\Quslz$ is not twist equivalent to
    any Hopf algebra.    
\end{lemma}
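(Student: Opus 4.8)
The plan is to reduce the twist-equivalence question to a single cohomology class attached to the group of one-dimensional representations of $\Quslz$, and then to invoke the two cited results to turn this into the stated dichotomy. The key observation is that the group $\hat G$ of one-dimensional $\Quslz$-modules, equipped with the associativity constraint inherited from $\phi_{s}$, is a twist invariant. Indeed, twisting $\Quslz$ by an invertible $J\in\Quslz\otimes\Quslz$ leaves the algebra unchanged, hence fixes both the set of characters $\chi\colon\Quslz\to k$ and their pointwise product, while it alters $\phi_{s}$ only by the coboundary of the $2$-cochain $\eta(\chi,\chi')=(\chi\otimes\chi')(J)$. Therefore the class $[\omega]\in H^{3}(\hat G,k^{*})$ of the $3$-cocycle $\omega(\chi_{1},\chi_{2},\chi_{3})=(\chi_{1}\otimes\chi_{2}\otimes\chi_{3})(\phi_{s})$ is unchanged under twisting, and it must vanish whenever $\Quslz$ is twist equivalent to a Hopf algebra, since a Hopf algebra has trivial associator and hence a trivial associativity constraint on its invertible objects.

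First I would determine $\hat G$ explicitly. A character $\chi$ must annihilate $E$ and $F$ because $E^{n^{2}}=F^{n^{2}}=0$; feeding this into the relation $FE-q^{-1}EF=1-\k^{-1}\hat{\k}$ forces $\chi(\hat{\k})=\chi(\k)$, and then $\k^{n}=1$ together with $\hat{\k}^{n}=\k^{-2}$ gives $\chi(\k)^{n}=1$ and $\chi(\k)^{n+2}=1$, so $\chi(\k)^{\gcd(n,2)}=1$. Thus for odd $n$ the only character is the counit and $\hat G=1$, whereas for even $n$ one has $\chi(\k)\in\{1,-1\}$ and $\hat G\cong\Z/2$, the nontrivial character being $\k\mapsto\q^{n/2}=-1$.

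For part (2) I would evaluate $\omega$ on $\hat G\cong\Z/2$ using $\phi_{s}=\sum\q^{-i[\frac{j+k}{n}]}1_{i}\otimes1_{j}\otimes1_{k}$ and the fact that $1_{i}$ projects onto the $\q^{i}$-eigenspace of $\k$. Identifying the generator of $\Z/2$ with the $\k$-eigenvalue $\q^{n/2}$, every value of $\omega$ is $1$ except on the triple $(\q^{n/2},\q^{n/2},\q^{n/2})$, where $j+k=n$ forces $[\frac{j+k}{n}]=1$ and one gets $\q^{-n/2}=-1$; this is precisely the normalized nontrivial $3$-cocycle generating $H^{3}(\Z/2,k^{*})\cong\Z/2$. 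Hence $[\omega]\neq0$, so $\Quslz$ cannot be twist equivalent to any Hopf algebra, which is the substance of \cite[Thm.~4.3]{EG}. For part (1) the vanishing $\hat G=1$ removes this obstruction, and the explicit Drinfeld twist produced in \cite[Thm.~4.1]{Liu} identifies the twisted algebra with $\uqslz$; combining the two references yields the lemma.

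The step I expect to be the main obstacle is not the cocycle computation but the \emph{converse} direction in the odd case: the vanishing of the invertible-object class is only a necessary condition, and one still has to exhibit a trivializing twist and verify that it converts the quasi-Hopf structure $(\D,\phi_{s},S,\alpha,\beta)$ into the genuine Hopf structure of $\uqslz$. That construction is exactly the technical content of \cite[Thm.~4.1]{Liu}, which is why the statement is phrased as a combination of that theorem with the Etingof--Gelaki obstruction rather than proved from scratch here.
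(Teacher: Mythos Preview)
Your argument is correct and in fact spells out more than the paper does: the paper simply records the lemma as a direct consequence of \cite[Thm.~4.3]{EG} and \cite[Thm.~4.1]{Liu} with no further proof. Your computation of the $3$-cocycle on the group of one-dimensional modules, together with your observation that the odd case still requires the explicit twist supplied by \cite{Liu}, matches the division of labour between those two references.
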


By Lemma \ref{l2.1}, we only need to consider the case where $n$ is even. For convenience, we assume $4|n$ throughout the paper. The results for $2|n$ case will be stated as remarks.

Let $\mathbf{u}^{+},\mathbf{u}^{-}$ and $\mathbf{u}^{0}$ be the subalgebras of $\Quslz$ generated by
$E,F$ and $\{\k,\hat{\k}\}$ respectively. Then $\Quslz$ has a triangle decomposition
$$\Quslz=\mathbf{u}^{-}\mathbf{u}^{0}\mathbf{u}^{+}.$$
Let $m=\frac{n}{2}$. Define
$$\varphi_{2i}:=\frac{1}{mn}\sum_{k=0}^{mn-1}q^{-2ik}(\k^{-1}\hat{\k})^{k},\;\;\mathbbm{1}_{0}:=\frac{1}{2}(1+\k\hat{\k}^{\frac{n}{2}}),\;\;
\mathbbm{1}_{1}:=\frac{1}{2}(1-\k\hat{\k}^{\frac{n}{2}}).$$
Let $e_{2i,0}=\varphi_{2i}\mathbbm{1}_{0}$ and $e_{2i,1}=\varphi_{2i}\mathbbm{1}_{1}$.

\begin{lemma}\label{l2.2} The set
$\{e_{2i,0},e_{2i,1}|1\leq i\leq \frac{n^{2}}{2}\}$ is a complete set of primitive idempotents of $\mathbf{u}^{0}$.
\end{lemma}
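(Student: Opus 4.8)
The plan is to identify $\mathbf{u}^{0}$ with the group algebra $k[G]$ of the finite abelian group $G=\langle \k,\hat{\k}\rangle$ and then to recognize the given elements as the standard character idempotents of $k[G]$. First I would pin down the structure of $G$ from the relations $\k^{n}=1$, $\hat{\k}^{n}=\k^{-2}$, $\k\hat{\k}=\hat{\k}\k$. Using $\hat{\k}^{n}=\k^{-2}$ to reduce the exponent of $\hat{\k}$ modulo $n$, every element is a monomial $\k^{a}\hat{\k}^{b}$ with $0\le a,b<n$, so $\mathbf{u}^{0}$ is spanned by $n^{2}$ monomials; the relation matrix $\bigl(\begin{smallmatrix} n&0\\ 2&n\end{smallmatrix}\bigr)$ has determinant $n^{2}$, so $|G|=n^{2}$, and by the PBW-type basis from the construction in \cite{Liu} these monomials are independent. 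Hence $\mathbf{u}^{0}\cong k[G]$ with $\dim_{k}\mathbf{u}^{0}=n^{2}$. Since $\operatorname{char}k=0$, this algebra is commutative and semisimple, so it is isomorphic to $k^{n^{2}}$; consequently a complete set of primitive idempotents is exactly a family of $n^{2}$ nonzero pairwise orthogonal idempotents summing to $1$.

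Next I would analyse two distinguished cyclic factors. Put $g=\k^{-1}\hat{\k}$ and $z=\k\hat{\k}^{m}$ with $m=n/2$. Writing $g^{k}=\k^{-b-2a}\hat{\k}^{b}$ for $k=an+b$ with $0\le b<n$, one checks that $g^{k}=1$ forces $b=0$ and $n\mid 2a$, so the order of $g$ is exactly $mn=n^{2}/2$; and $z^{2}=\k^{2}\hat{\k}^{n}=1$ with $z\neq 1$, so $z$ has order $2$. The crucial point is that $z\notin\langle g\rangle$: solving $g^{k}=z=\k\hat{\k}^{m}$ requires $b=m$ and $-m-2a\equiv 1\pmod n$, i.e. $2a\equiv -m-1\pmod n$, which is impossible since $4\mid n$ makes $m$ even and hence the right-hand side odd while the left-hand side is even. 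Therefore $\langle g\rangle\cap\langle z\rangle=1$, and comparing orders ($mn\cdot 2=n^{2}=|G|$) yields the internal direct product $G=\langle g\rangle\times\langle z\rangle\cong\Z/(mn)\times\Z/2$, so $\mathbf{u}^{0}=k[\langle g\rangle]\otimes k[\langle z\rangle]$.

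Finally I would identify the idempotents inside each factor and multiply. Because $n$ is even, $q^{2}$ has order $n^{2}/2=mn$, so $q^{-2}$ is a primitive $mn$-th root of unity; the discrete Fourier elements $\varphi_{2i}=\tfrac{1}{mn}\sum_{k=0}^{mn-1}(q^{-2})^{ik}g^{k}$, for $1\le i\le mn$, are then precisely the $mn$ primitive idempotents of $k[\langle g\rangle]$ (orthogonal, idempotent, and summing to $1$), while $\mathbbm{1}_{0}=\tfrac12(1+z)$ and $\mathbbm{1}_{1}=\tfrac12(1-z)$ are the two primitive idempotents of $k[\langle z\rangle]$. Under the tensor decomposition the products $e_{2i,\epsilon}=\varphi_{2i}\mathbbm{1}_{\epsilon}$ (for $1\le i\le mn=n^{2}/2$ and $\epsilon\in\{0,1\}$) are nonzero, pairwise orthogonal, and idempotent, with $\sum_{i,\epsilon}e_{2i,\epsilon}=\sum_{i}\varphi_{2i}(\mathbbm{1}_{0}+\mathbbm{1}_{1})=1$; being $n^{2}=\dim_{k}\mathbf{u}^{0}$ in number, they constitute a complete set of primitive idempotents.

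The main obstacle is the structural step of the second paragraph: verifying that $g$ attains the full order $mn$ and that $z$ escapes $\langle g\rangle$, since these two facts together guarantee that the $n^{2}$ products $e_{2i,\epsilon}$ are genuinely distinct and exhaust the idempotents. It is exactly here that the hypothesis $4\mid n$ enters, through the parity argument; for $2\mid n$ with $4\nmid n$ the computation collapses and $G$ becomes cyclic, which is why that case must be treated separately in the remarks.
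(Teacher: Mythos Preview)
Your proof is correct. Both you and the paper ultimately establish that $g=\k^{-1}\hat{\k}$ and $z=\k\hat{\k}^{n/2}$ together generate all of $G=\langle\k,\hat{\k}\rangle$, and both invoke $4\mid n$ through a parity obstruction, but the packaging differs. The paper argues directly on the span $V$ of the $e_{2i,j}$: it notes $gz=\hat{\k}^{n/2+1}\in V$, checks that $\gcd(n/2+1,\,n^{2}/2)=1$ (the hypothesis $4\mid n$ forces $n/2+1$ to be odd), concludes $\hat{\k}\in V$ and hence $\k\in V$, and then relies implicitly on the fact that $n^{2}$ orthogonal idempotents spanning an $n^{2}$-dimensional commutative algebra must be a complete primitive system. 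Your argument is more structural: you compute $|g|=mn$, $|z|=2$, show $z\notin\langle g\rangle$ by the parity obstruction, deduce $G=\langle g\rangle\times\langle z\rangle$, and read off the $e_{2i,\epsilon}$ as tensor products of character idempotents. Your version makes primitivity and completeness self-evident, at the cost of a slightly longer setup; the paper's version is terser but leaves the idempotent bookkeeping to the reader.

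One minor slip in your closing remark: for $2\mid n$, $4\nmid n$, the group $G$ does \emph{not} become cyclic. The Smith normal form of the relation matrix still gives $G\cong\Z/2\times\Z/(n^{2}/2)$, and $n^{2}/2$ is even whenever $n$ is even, so $G$ is never cyclic in the regime under discussion. What actually breaks is that $z$ now lies in $\langle g\rangle$, so $\langle g,z\rangle=\langle g\rangle$ is a proper (index-$2$) subgroup of $G$; consequently the $e_{2i,\epsilon}$ only span a proper subalgebra of $\mathbf{u}^{0}$, which is why a different choice of idempotents is needed in that case. This does not affect your proof of the lemma itself.
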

\begin{proof} Let $V$ be the space spanned by $e_{2i,0},e_{2i,1}$ for $1\leq i\leq \frac{n^{2}}{2}$. It is enough to show that $\k$ and $\hat{\k}$ both
belong to $V$. By the definition of $e_{2i,0},e_{2i,1}$, we know $\k^{-1}\hat{\k},\k\hat{\k}^{\frac{n}{2}}\in V$ and thus $\hat{\k}^{\frac{n}{2}+1}\in V$. Clearly, the order of $\hat{\k}$ is $\frac{n^{2}}{2}$. To show $\hat{\k}\in V$, it suffices to show that $\frac{n}{2}+1$ is coprime to $\frac{n^{2}}{2}$. From the identity: $(\frac{n}{2}-1)(\frac{n}{2}+1)=\frac{n^{2}}{4}-1$, we know that $l.c.m(\frac{n}{2}+1,\frac{n^{2}}{2})=1$ or $2$. If
$l.c.m(\frac{n}{2}+1,\frac{n^{2}}{2})=2$, we have $2|(\frac{n}{2}+1)$ which is absurd by our assumption that $4|n$. Therefore, $\hat{\k}\in V$
and hence $\k\in V$.
\end{proof}

\begin{remark}\label{r2.3} \emph{If we only assume that $2|n$, we can not assure that Lemma \ref{l2.2} is always true. However, if we define
$\varphi'_{2i}:=\frac{1}{mn}\sum_{k=0}^{mn-1}q^{-2ik}\hat{\k}^{k}$ and $e'_{2i,0}:=\varphi'_{2i}\mathbbm{1}_{0},e'_{2i,1}:=\varphi'_{2i}\mathbbm{1}_{1}$.
Then one can show that the set $\{e'_{2i,0},e'_{2i,1}|1\leq i\leq \frac{n^{2}}{2}\}$ is always a complete set of primitive idempotents of $\mathbf{u}^{0}$.} \end{remark}

\begin{lemma}\label{l2.3} The following identities hold in $\Quslz$.
\begin{gather*} \k^{-1}\hat{\k}E=q^{-2}E\k^{-1}\hat{\k},\;\;
\k^{-1}\hat{\k}F=q^{2}F\k^{-1}\hat{\k},\\
\k\hat{\k}^{\frac{n}{2}}E=-E\k\hat{\k}^{\frac{n}{2}},\;\;
\k\hat{\k}^{\frac{n}{2}}F=-F\k\hat{\k}^{\frac{n}{2}},\\
\k^{-1}\hat{\k}e_{2i,0}=q^{2i}e_{2i,0},\;\;\k^{-1}\hat{\k}e_{2i,1}=q^{2i}e_{2i,1},\\
\k\hat{\k}^{\frac{n}{2}}e_{2i,0}=e_{2i,0},\;\;\k\hat{\k}^{\frac{n}{2}}e_{2i,1}=-e_{2i,1}.
\end{gather*}
\end{lemma}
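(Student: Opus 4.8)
The plan is to separate the eight identities into two groups: the four commutation relations of the elements $\k^{-1}\hat{\k}$ and $\k\hat{\k}^{\frac{n}{2}}$ with the generators $E,F$, and the four eigenvalue statements for these same elements acting on the idempotents $e_{2i,0},e_{2i,1}$. The first group will come directly from the defining relations, and the second group will follow from the first together with the explicit shape of the idempotents and the commutativity of $\mathbf{u}^{0}$.

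For the commutation identities I would first extract from the relations the single-generator rules $\k^{-1}E=\q^{-1}E\k^{-1}$ and $\hat{\k}E=\q q^{-2}E\hat{\k}$, together with their $F$-analogues. Multiplying the two $E$-rules, the scalars $\q^{-1}$ and $\q$ cancel and leave $\k^{-1}\hat{\k}E=q^{-2}E\k^{-1}\hat{\k}$, and the $F$-computation is identical with $q^{-2}$ replaced by $q^{2}$. For the two sign identities I would iterate $\hat{\k}E=\q q^{-2}E\hat{\k}$ to obtain $\hat{\k}^{\frac{n}{2}}E=(\q q^{-2})^{\frac{n}{2}}E\hat{\k}^{\frac{n}{2}}$ and then commute $\k$ through, giving $\k\hat{\k}^{\frac{n}{2}}E=(\q q^{-2})^{\frac{n}{2}}\q\,E\k\hat{\k}^{\frac{n}{2}}$; using $\q=q^{n}$ the accumulated scalar collapses to $q^{n^{2}/2}$, and the $F$-version produces $q^{-n^{2}/2}$. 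The single non-formal point — and the main obstacle — is the identification $q^{n^{2}/2}=-1$: since $q$ is a primitive $n^{2}$-th root of unity, $q^{n^{2}/2}$ is a square root of $1$ that cannot equal $1$ (else the order of $q$ would be at most $n^{2}/2$), so it must be $-1$. This delivers the two sign identities at once.

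For the action on idempotents I would use that $\mathbf{u}^{0}$ is commutative, so it suffices to compute the action of $g:=\k^{-1}\hat{\k}$ and of $h:=\k\hat{\k}^{\frac{n}{2}}$ on the factors $\varphi_{2i}$ and $\mathbbm{1}_{0},\mathbbm{1}_{1}$ separately. For $g$ acting on $\varphi_{2i}=\frac{1}{mn}\sum_{k=0}^{mn-1}q^{-2ik}g^{k}$, I would reindex to get $g\varphi_{2i}=q^{2i}\cdot\frac{1}{mn}\sum_{k=1}^{mn}q^{-2ik}g^{k}$ and check that the boundary term closes up: one has $g^{mn}=\k^{-mn}\hat{\k}^{mn}=1$ (as $mn=\frac{n^{2}}{2}$ and the orders of $\k,\hat{\k}$ divide it) and $q^{-2i\cdot mn}=q^{-in^{2}}=1$, so the $k=mn$ term equals the $k=0$ term and $g\varphi_{2i}=q^{2i}\varphi_{2i}$. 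For $h$ I would note $h^{2}=\k^{2}\hat{\k}^{n}=\k^{2}\k^{-2}=1$, whence $h\mathbbm{1}_{0}=\tfrac12(h+h^{2})=\mathbbm{1}_{0}$ and $h\mathbbm{1}_{1}=\tfrac12(h-h^{2})=-\mathbbm{1}_{1}$. Combining these through the commutativity of $\mathbf{u}^{0}$ gives $\k^{-1}\hat{\k}e_{2i,\epsilon}=q^{2i}e_{2i,\epsilon}$ for $\epsilon\in\{0,1\}$ and $\k\hat{\k}^{\frac{n}{2}}e_{2i,0}=e_{2i,0}$, $\k\hat{\k}^{\frac{n}{2}}e_{2i,1}=-e_{2i,1}$, completing all eight identities. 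Apart from the scalar identification $q^{n^{2}/2}=-1$, every step is routine bookkeeping; the one place requiring care is justifying the reindexing for $\varphi_{2i}$ via the two closure facts $g^{mn}=1$ and $q^{-2i\cdot mn}=1$, which is exactly where the modulus $mn$ in the definition of $\varphi_{2i}$ is needed.
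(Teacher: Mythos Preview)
Your proposal is correct and is precisely a detailed execution of what the paper's proof records only as ``Straightforward.'' There is no alternative approach at play: the paper simply omits the bookkeeping you have written out, and your computations (including the key scalar identification $q^{n^{2}/2}=-1$ and the closure facts $g^{mn}=1$, $q^{-2i\cdot mn}=1$) are all sound.
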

\begin{proof} Straightforward.
\end{proof}
For any natural number $s$, define $s_{q}:=1+q+\cdots +q^{s-1}$.
\begin{lemma}\label{l2.4} The following identities hold in $\Quslz$.
\begin{gather} FE^{s}=q^{-s}E^{s}F+s_{q^{-1}}E^{s-1}-s_{q}\k^{-1}\hat{\k}E^{s-1},\label{eq;2.1}\\
E^{s}F=q^{s}FE^{s}+q s_{q^{-1}}E^{s-1}\k^{-1}\hat{\k}-q s_{q}E^{s-1},\\
EF^{s}=q^{s}F^{s}E+q^{s}s_{q}F^{s-1}\k^{-1}\hat{\k}-q s_{q}F^{s-1},\\
F^{s}E=q^{-s}EF^{s}+s_{q^{-1}}F^{s-1}-s_{q}F^{s-1}\k^{-1}\hat{\k}.
\end{gather}
\end{lemma}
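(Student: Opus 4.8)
The plan is to prove all four commutation identities by induction on $s$, starting from the defining relation $FE - q^{-1}EF = 1 - \k^{-1}\hat{\k}$ in equation (2.5). The four formulas are closely related: once the first identity \eqref{eq;2.1} is established, the second can be obtained by solving it for $E^sF$ (since $q^{-s}$ is invertible), and the third and fourth are the $E \leftrightarrow F$ analogues, which follow from an entirely symmetric induction using the ``opposite'' commutation relation. So the real work is a single induction; I would carry out \eqref{eq;2.1} in detail and indicate that the remaining three go through mutatis mutandis.

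First I would record the base case $s=1$, where \eqref{eq;2.1} reads $FE = q^{-1}EF + 1_{q^{-1}} - 1_q\,\k^{-1}\hat\k$. Since $1_{q^{-1}} = 1_q = 1$, this is exactly the defining relation (2.5), so the base case holds. For the inductive step I would assume \eqref{eq;2.1} for $s$ and compute $FE^{s+1} = (FE^s)E$ by substituting the inductive hypothesis:
\begin{equation*}
FE^{s+1} = q^{-s}E^s(FE) + s_{q^{-1}}E^s - s_q\,\k^{-1}\hat\k\,E^s E.
\end{equation*}
Into this I would feed $FE = q^{-1}EF + 1 - \k^{-1}\hat\k$ for the first term, and for the last term I would need to move $\k^{-1}\hat\k$ past $E^s$. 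The crucial auxiliary fact is the first identity of Lemma \ref{l2.3}, namely $\k^{-1}\hat\k E = q^{-2}E\k^{-1}\hat\k$, which iterates to $\k^{-1}\hat\k E^s = q^{-2s}E^s\k^{-1}\hat\k$; I would use this to collect all $\k^{-1}\hat\k$ factors on the right.

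After substitution the right-hand side becomes a combination of $E^{s+1}F$, $E^s$, and $E^s\k^{-1}\hat\k$, and the whole step reduces to checking that the scalar coefficients match those predicted by \eqref{eq;2.1} with $s$ replaced by $s+1$. Here the key numerical identities are the $q$-integer recursions $q^{-1}s_{q^{-1}} + 1 = (s+1)_{q^{-1}}$ and $q^{-s} + q\,s_q = (s+1)_q$ (together with the analogous relation needed to absorb the $q^{-2s}$ arising from commuting $\k^{-1}\hat\k$ past $E^s$), which follow directly from the definition $s_q = 1 + q + \cdots + q^{s-1}$. I expect the main obstacle to be purely bookkeeping: keeping track of the powers of $q$ produced when $\k^{-1}\hat\k$ is transported across $E^s$ and verifying that they combine correctly with the $q$-integer coefficients so that the $\k^{-1}\hat\k$-term and the scalar term each acquire the right new coefficient. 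There is no conceptual difficulty — once the exponent accounting is done carefully the identities close up — which is why the authors are content to label the proof ``Straightforward.''
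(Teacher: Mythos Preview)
Your proposal is correct and matches the paper's proof: both argue by induction on $s$ for \eqref{eq;2.1}, feeding in the defining relation (2.5) for $FE$ and the commutation $\k^{-1}\hat\k E = q^{-2}E\k^{-1}\hat\k$ from Lemma \ref{l2.3}, and then declare the remaining three identities similar. The only cosmetic difference is that the paper keeps $\k^{-1}\hat\k$ on the \emph{left} of $E^{s-1}$ (so the commutation is used on the term $-q^{-s}E^{s}\k^{-1}\hat\k$ coming from the substitution of $FE$, not on the last term), whereas you propose collecting it on the right; the two normal forms differ only by a power of $q$ and the bookkeeping closes up either way.
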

\begin{proof} We only prove the first one since the other proofs are similar. It is clear that the formula \eqref{eq;2.1} is true for $s=1$. Now assume that the formula \eqref{eq;2.1} holds for $s$. We show that it holds for $s+1$. Indeed, we have:
\begin{eqnarray*}
FE^{s+1}&=&(q^{-s}E^{s}F+s_{q^{-1}}E^{s-1}-s_{q}\k^{-1}\hat{\k}E^{s-1})E\\
&=&q^{-s}E^{s}(q^{-1}EF+(1-\k^{-1}\hat{\k}))+s_{q^{-1}}E^{s}-s_{q}\k^{-1}\hat{\k}E^{s}\\
&=&q^{-(s+1)}E^{s+1}F +q^{-s}E^{s}-q^{-s}q^{2s}\k^{-1}\hat{\k}E^{s}+s_{q^{-1}}E^{s}-s_{q}\k^{-1}\hat{\k}E^{s}\\
&=&q^{-(s+1)}E^{s+1}F+(s+1)_{q^{-1}}E^{s}-(s+1)_{q}\k^{-1}\hat{\k}E^{s}.
\end{eqnarray*}
\end{proof}
For $1\leq i\leq \frac{n^{2}}{2}$, define \begin{equation}
\alpha_{2i,0}:=F^{n^{2}-1}e_{2i,0},\;\;\;\;\alpha_{2i,1}:=F^{n^{2}-1}e_{2i,1}.
\end{equation}
\begin{corollary}\label{c2.1} For $1\leq i\leq \frac{n^{2}}{2}$, $j=0,1$ and $s\leq n^{2}-1$, we have:
\begin{gather*}FE^{s}\alpha_{2i,j}=s_{q^{-1}}(1-q^{2i-1-s})E^{s-1}\alpha_{2i,j},\;\;\k^{-1}\hat{\k}E^{s}\alpha_{2i,j}=
q^{2i-2-2s}E^{s}\alpha_{2i,j},\\
\;\;\k\hat{\k}^{\frac{n}{2}}E^{s}\alpha_{2i,j}=(-1)^{s+1+j}E^{s}\alpha_{2i,j}.
\end{gather*}
\end{corollary}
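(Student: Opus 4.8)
The plan is to establish the three identities in an order that lets the first follow from the other two. The two ``eigenvalue'' identities involving $\k^{-1}\hat{\k}$ and $\k\hat{\k}^{\frac{n}{2}}$ are the natural starting point, since each reduces to commuting a group-like element past $E^{s}$ and then past the factor $F^{n^{2}-1}$ hidden inside $\alpha_{2i,j}$.

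First I would prove the $\k^{-1}\hat{\k}$-identity. Iterating the relations of Lemma \ref{l2.3} gives $\k^{-1}\hat{\k}E^{s}=q^{-2s}E^{s}\k^{-1}\hat{\k}$ and $\k^{-1}\hat{\k}F^{n^{2}-1}=q^{2(n^{2}-1)}F^{n^{2}-1}\k^{-1}\hat{\k}$. Because $q$ is an $n^{2}$-th primitive root of unity we have $q^{2(n^{2}-1)}=q^{-2}$, so combining with $\k^{-1}\hat{\k}e_{2i,j}=q^{2i}e_{2i,j}$ yields $\k^{-1}\hat{\k}\alpha_{2i,j}=q^{2i-2}\alpha_{2i,j}$; multiplying on the left by $E^{s}$ produces the claimed factor $q^{2i-2-2s}$. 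The $\k\hat{\k}^{\frac{n}{2}}$-identity is handled the same way: the sign relations of Lemma \ref{l2.3} give factors $(-1)^{s}$ from $E^{s}$ and $(-1)^{n^{2}-1}=-1$ from $F^{n^{2}-1}$ (here $4\mid n$ guarantees $n^{2}$ is even), and the idempotent eigenvalues $\k\hat{\k}^{\frac{n}{2}}e_{2i,j}=(-1)^{j}e_{2i,j}$ assemble into the exponent $s+1+j$.

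For the remaining identity I would expand $FE^{s}$ by Lemma \ref{l2.4} and evaluate on $\alpha_{2i,j}$. The decisive simplification is that $F\alpha_{2i,j}=F^{n^{2}}e_{2i,j}=0$, so the diagonal term $q^{-s}E^{s}F\alpha_{2i,j}$ drops out; the term $s_{q}\k^{-1}\hat{\k}E^{s-1}\alpha_{2i,j}$ is rewritten using the already-established $\k^{-1}\hat{\k}$-identity at level $s-1$, which contributes the factor $q^{2i-2s}$. What remains is $(s_{q^{-1}}-s_{q}\,q^{2i-2s})E^{s-1}\alpha_{2i,j}$.

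The only genuinely computational point, and the place where I expect to have to be careful, is the quantum-integer identity $s_{q^{-1}}-s_{q}\,q^{2i-2s}=s_{q^{-1}}(1-q^{2i-1-s})$. This rests on the elementary relation $s_{q}=q^{s-1}s_{q^{-1}}$, obtained by multiplying $s_{q^{-1}}=1+q^{-1}+\cdots+q^{-(s-1)}$ by $q^{s-1}$; it converts $s_{q}\,q^{2i-2s}$ into $s_{q^{-1}}\,q^{2i-1-s}$, and factoring out $s_{q^{-1}}$ gives the desired form. No further difficulty is anticipated, since every step is a bounded induction combined with the eigenvalue and commutation data recorded in Lemmas \ref{l2.3} and \ref{l2.4}.
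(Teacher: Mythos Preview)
Your argument is correct and is exactly the computation the paper has in mind: its proof is the one-liner ``These are direct consequences of Formula \eqref{eq;2.1} and Lemma \ref{l2.3},'' and you have simply unpacked what that means, including the key observations $F\alpha_{2i,j}=F^{n^{2}}e_{2i,j}=0$ and $s_{q}=q^{s-1}s_{q^{-1}}$.
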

\begin{proof} These are direct consequences of Formula \eqref{eq;2.1} and Lemma \ref{l2.3}.
\end{proof}

Thus, for any $1\leq i\leq \frac{n^{2}}{2}$ and $j=0,1$, the $n^{2}$-dimensional left ideal $\Quslz\alpha_{2i,j}$
 may be represented schematically as:

\begin{figure}[hbt]
\begin{picture}(150,200)(80,25)
\put(100 ,128){\makebox(0,0){$\bullet \small{\beta_{2i,j}}$}}
\put(92.5,133){\vector(0,1){15}} \put(87.5,148){\vector(0,-1){15}}
\put(90,153){\makebox(0,0){$\bullet$}}
\put(90,165){\makebox(0,0){$\vdots$}}
 \put(100,215){\makebox(0,0){$\bullet \small{\alpha_{2i,j}}$}}
\put(92.5,197){\vector(0,1){15}}
\put(87.5,212){\vector(0,-1){15}}\put(82.5,204){\makebox(0,0){$E$}}
\put(90,193){\makebox(0,0){$\bullet$}}
\put(92.5,175){\vector(0,1){15}} \put(87.5,190){\vector(0,-1){15}}
\put(90,171){\makebox(0,0){$\bullet$}} \put(89,
124){\vector(0,-1){15}} \put(99,105){\makebox(0,0){$\bullet
\small{\widetilde{\alpha}_{2i,j}}$}}
\put(100 ,40){\makebox(0,0){$\bullet
\small{\widetilde{\beta}_{2i,j}}$}}
\put(92.5,45){\vector(0,1){15}}\put(97.5,53){\makebox(0,0){$F$}}
\put(87.5,60){\vector(0,-1){15}}
\put(90,65){\makebox(0,0){$\bullet$}}
\put(90,77){\makebox(0,0){$\vdots$}}
\put(92.5,87){\vector(0,1){15}} \put(87.5,102){\vector(0,-1){15}}
\put(90,83){\makebox(0,0){$\bullet$}}
\end{picture}
\end{figure}

where $\beta_{2i,j}=E^{2i-2}\alpha_{2i,j},\widetilde{\alpha}_{2i,j}=E^{2i-1}\alpha_{2i,j}$ and
 $\widetilde{\beta}_{2i,j}=E^{n^{2}-1}\alpha_{2i,j}$. Each dot stands for a 1-dimensional subspace and
an upward (resp. downward) arrow indicates a nonzero left-multiplication action by $F$ (resp. $E$). We call $\Quslz\alpha_{2i,j}$
a \emph{Verma} module.

Contrasting to the classical $\uqslz$ case (e.g. see page 362 in \cite{Su}), the single downward arrow \emph{always} appears since $2i-1$ is odd while $n^{2}$ is even. Therefore, we always have $F^{n^{2}-1}\beta_{2i,j}=0$ for $1\leq i\leq \frac{n^{2}}{2}$ and $j=0,1$, and
\begin{equation} \beta_{2i,j}=F\gamma_{2i,j},
\end{equation}
for a unique vector $\gamma_{2i,j}$ of the form $\gamma_{2i,j}=\gamma_{2i,j}^{-}\gamma_{2i,j}^{0}\gamma_{2i,j}^{+}$
for $\gamma_{2i,j}^{\star}\in \mathbf{u}^{\star}$ with $\star=-,0,+$. Observe  that $\gamma_{2i,j}$ and $\widetilde{\alpha}_{2i,j}$ have
the same left $\k^{-1}\hat{\k}$- and $\k\hat{\k}^{\frac{n}{2}}$-eigenvalues.

\begin{lemma}\label{l2.5} For $1\leq i\leq \frac{n^{2}}{2},j=0,1$ and $s\leq n^{2}-1$, we have
$$FE^{s}\gamma_{2i,j}-q^{-s}E^{s}\beta_{2i,j}=s_{q^{-1}}(1-q^{1-s-2i})E^{s-1}\gamma_{2i,j}.$$
\end{lemma}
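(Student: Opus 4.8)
The plan is to apply the commutation identity \eqref{eq;2.1} of Lemma \ref{l2.4} directly to the vector $\gamma_{2i,j}$ and then reduce everything to a single scalar computation with quantum integers. First I would write, using Lemma \ref{l2.4} with the substitution $x\mapsto\gamma_{2i,j}$,
$$FE^{s}\gamma_{2i,j}=q^{-s}E^{s}F\gamma_{2i,j}+s_{q^{-1}}E^{s-1}\gamma_{2i,j}-s_{q}\k^{-1}\hat{\k}E^{s-1}\gamma_{2i,j}.$$
By the defining relation $\beta_{2i,j}=F\gamma_{2i,j}$, the first term on the right is exactly $q^{-s}E^{s}\beta_{2i,j}$, i.e.\ the very quantity being subtracted on the left-hand side of the lemma. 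Hence the whole problem collapses to evaluating the last term $s_{q}\,\k^{-1}\hat{\k}E^{s-1}\gamma_{2i,j}$.

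Next I would pin down the left $\k^{-1}\hat{\k}$-eigenvalue of $\gamma_{2i,j}$. The observation recorded just before the lemma states that $\gamma_{2i,j}$ and $\widetilde{\alpha}_{2i,j}=E^{2i-1}\alpha_{2i,j}$ have the same $\k^{-1}\hat{\k}$-eigenvalue, while Corollary \ref{c2.1} gives $\k^{-1}\hat{\k}E^{s}\alpha_{2i,j}=q^{2i-2-2s}E^{s}\alpha_{2i,j}$; specializing $s=2i-1$ yields the eigenvalue $q^{-2i}$, so that $\k^{-1}\hat{\k}\gamma_{2i,j}=q^{-2i}\gamma_{2i,j}$. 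Using the commutation rule $\k^{-1}\hat{\k}E=q^{-2}E\k^{-1}\hat{\k}$ from Lemma \ref{l2.3} to push $\k^{-1}\hat{\k}$ through $E^{s-1}$, I then obtain
$$\k^{-1}\hat{\k}E^{s-1}\gamma_{2i,j}=q^{-2(s-1)}E^{s-1}\k^{-1}\hat{\k}\gamma_{2i,j}=q^{2-2s-2i}E^{s-1}\gamma_{2i,j}.$$

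Substituting this back, the left-hand side of the claimed identity becomes $\bigl(s_{q^{-1}}-s_{q}q^{2-2s-2i}\bigr)E^{s-1}\gamma_{2i,j}$, so all that survives is the purely numerical identity $s_{q^{-1}}-s_{q}q^{2-2s-2i}=s_{q^{-1}}(1-q^{1-s-2i})$. This in turn rests on the elementary relation $s_{q}=q^{s-1}s_{q^{-1}}$ between the two quantum integers (immediate from $s_{q}=(q^{s}-1)/(q-1)$ and $s_{q^{-1}}=(q^{-s}-1)/(q^{-1}-1)$), which converts $s_{q}q^{2-2s-2i}$ into $s_{q^{-1}}q^{1-s-2i}$ and finishes the proof. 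The manipulation is otherwise routine and requires no induction; the only step demanding care is the correct identification of the eigenvalue $q^{-2i}$ for $\gamma_{2i,j}$, since any slip there would propagate straight into the exponent of the final $q$-power and throw off the matching with $1-q^{1-s-2i}$.
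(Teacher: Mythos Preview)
Your proposal is correct and follows essentially the same route as the paper's proof: apply the commutation identity from Lemma~\ref{l2.4}, substitute $F\gamma_{2i,j}=\beta_{2i,j}$, evaluate the $\k^{-1}\hat{\k}$-eigenvalue on $E^{s-1}\gamma_{2i,j}$, and simplify using $s_q=q^{s-1}s_{q^{-1}}$. The paper merely compresses your last two steps into one line by writing $s_{q}\k^{-1}\hat{\k}E^{s-1}=q^{-(s-1)}s_{q^{-1}}E^{s-1}\k^{-1}\hat{\k}$ directly, but the content is identical.
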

\begin{proof} By Lemma \ref{l2.4}, we have:
 \begin{eqnarray*}
FE^{s}\gamma_{2i,j}&=&(q^{-s}E^{s}F+s_{q^{-1}}E^{s-1}-s_{q}\k^{-1}\hat{\k}E^{s-1})\gamma_{2i,j}\\
&=& q^{-s}E^{s}\beta_{2i,j}+s_{q^{-1}}E^{s-1}\gamma_{2i,j}-q^{-(s-1)}s_{q^{-1}}E^{s-1}\k^{-1}\hat{\k}\gamma_{2i,j}\\
&=&q^{-s}E^{s}\beta_{2i,j}+s_{q^{-1}}E^{s-1}\gamma_{2i,j}-q^{-(s-1)}s_{q^{-1}}q^{-2i}E^{s-1}\gamma_{2i,j}\\
&=&q^{-s}E^{s}\beta_{2i,j}+s_{q^{-1}}(1-q^{1-s-2i})E^{s-1}\gamma_{2i,j}.
\end{eqnarray*}
\end{proof}

\begin{corollary}\label{c2.7} For $1\leq i\leq \frac{n^{2}}{2}$ and $j=0,1$, $F^{n^{2}-1}E^{n^{2}-1}\gamma_{2i,j}$ is a nonzero multiple of
 $\widetilde{\alpha}_{2i,j}$.
\end{corollary}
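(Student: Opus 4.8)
The plan is to expand $F^{n^{2}-1}E^{n^{2}-1}\gamma_{2i,j}$ by peeling off the $n^{2}-1$ copies of $F$ one at a time, using only the two ``descent'' relations already available. Write $u_{s}:=E^{s}\gamma_{2i,j}$ and $w_{s}:=E^{s}\beta_{2i,j}$. Since $\beta_{2i,j}=E^{2i-2}\alpha_{2i,j}$, we have $w_{s}=E^{s+2i-2}\alpha_{2i,j}$, and in particular $w_{1}=E^{2i-1}\alpha_{2i,j}=\widetilde{\alpha}_{2i,j}$, which is already the target vector. Lemma \ref{l2.5} reads $Fu_{s}=q^{-s}w_{s}+c_{s}u_{s-1}$ with $c_{s}=s_{q^{-1}}(1-q^{1-s-2i})$, while Corollary \ref{c2.1} gives $Fw_{s}=d_{s+2i-2}w_{s-1}$ with $d_{t}=t_{q^{-1}}(1-q^{2i-1-t})$ (and $w_{s}=0$ once $s+2i-2\geq n^{2}$). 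So applying $F$ either keeps a term on the $u$-chain or moves it once and for all onto the $w$-chain, after which it can only descend the $w$-chain.

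First I would organize the expansion of $F^{n^{2}-1}u_{n^{2}-1}$ according to the step $s$ at which a term leaves the $u$-chain. A term that never leaves picks up the scalar $\prod_{s=1}^{n^{2}-1}c_{s}$ and lands on $u_{0}=\gamma_{2i,j}$. A term that leaves at $u_{s}$ (via the coefficient $q^{-s}$) has exactly $s-1$ copies of $F$ remaining, hence descends $w_{s}\to\cdots\to w_{1}=\widetilde{\alpha}_{2i,j}$. Thus every contribution lands either on $\gamma_{2i,j}$ or on $\widetilde{\alpha}_{2i,j}$, so it suffices to show that the $\gamma_{2i,j}$-term vanishes and that the $\widetilde{\alpha}_{2i,j}$-coefficient is a nonzero scalar.

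The key arithmetic observation is that $c_{n^{2}+1-2i}=0$, because $q^{1-(n^{2}+1-2i)-2i}=q^{-n^{2}}=1$, and that $n^{2}+1-2i\in\{1,\dots,n^{2}-1\}$ for every $1\leq i\leq \tfrac{n^{2}}{2}$; likewise $d_{n^{2}}=0$ since $(n^{2})_{q^{-1}}=0$. The $\gamma_{2i,j}$-coefficient $\prod_{s=1}^{n^{2}-1}c_{s}$ contains the factor $c_{n^{2}+1-2i}$ and hence vanishes. For the branching terms, the $u$-descent part of the term leaving at $s$ is $\prod_{t=s+1}^{n^{2}-1}c_{t}$ and its $w$-descent part is $\prod_{t=2i}^{s+2i-2}d_{t}$: when $s<n^{2}+1-2i$ the former contains $c_{n^{2}+1-2i}=0$, and when $s>n^{2}+1-2i$ the latter contains $d_{n^{2}}=0$. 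Hence the unique surviving branch is $s=n^{2}+1-2i$, giving $F^{n^{2}-1}E^{n^{2}-1}\gamma_{2i,j}=A\,\widetilde{\alpha}_{2i,j}$ with $A=\bigl(\prod_{t=n^{2}+2-2i}^{n^{2}-1}c_{t}\bigr)q^{2i-1}\bigl(\prod_{t=2i}^{n^{2}-1}d_{t}\bigr)$.

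Finally I would verify $A\neq 0$. In both surviving products the $q$-integers $t_{q^{-1}}$ are nonzero for $1\leq t\leq n^{2}-1$; the factor $1-q^{1-t-2i}$ vanishes only at $t=n^{2}+1-2i$, which is excluded from the $c$-range, and $1-q^{2i-1-t}$ vanishes only at $t=2i-1$, which is excluded from the $d$-range; and $q^{2i-1}\neq 0$. Therefore $A$ is a nonzero scalar and $F^{n^{2}-1}E^{n^{2}-1}\gamma_{2i,j}$ is a nonzero multiple of $\widetilde{\alpha}_{2i,j}$. I expect the main obstacle to be purely the bookkeeping: tracking the branching paths and confirming that the two ``accidental'' zeros $c_{n^{2}+1-2i}$ and $d_{n^{2}}$ are positioned so as to annihilate every path except the single one that reaches $\widetilde{\alpha}_{2i,j}$.
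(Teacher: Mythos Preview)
Your proof is correct and follows essentially the same approach as the paper: both repeatedly apply Lemma~\ref{l2.5} and Corollary~\ref{c2.1}, and both hinge on the same two vanishing facts, namely $c_{n^{2}+1-2i}=0$ and $w_{s}=E^{s+2i-2}\alpha_{2i,j}=0$ for $s\geq n^{2}-2i+2$. The only difference is organizational: the paper proceeds step by step, observing that for $s\geq n^{2}-2i+2$ the $\beta$-term in Lemma~\ref{l2.5} is zero (so the recursion stays linear on the $u$-chain), and that at $s=n^{2}-2i+1$ the coefficient $c_{s}$ vanishes (so one is forced onto the $w$-chain, landing on $\widetilde{\beta}_{2i,j}$), whence $F^{n^{2}-2i}\widetilde{\beta}_{2i,j}$ is a nonzero multiple of $\widetilde{\alpha}_{2i,j}$; you instead set up the full branching tree and then prune every branch except $s=n^{2}+1-2i$. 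One small remark: your appeal to $d_{n^{2}}=0$ to kill the branches with $s>n^{2}+1-2i$ is redundant, since for those $s$ you already have $w_{s}=0$ (as you note), so the branch dies at the jump itself rather than during the $w$-descent.
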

\begin{proof} Using Lemma \ref{l2.5} repeatedly, we have:
 \begin{eqnarray*}
F^{n^{2}-1}E^{n^{2}-1}\gamma_{2i,j}&=&\prod_{t=n^{2}-1}^{n^{2}-2i}t_{q^{-1}}(1-q^{1-t-2i})F^{n^{2}-2i+1}E^{n^{2}-2i+1}\gamma_{2i,j}\\
&=&q^{-(n^{2}-2i+1)}\prod_{t=n^{2}-1}^{n^{2}-2i}t_{q^{-1}}(1-q^{1-t-2i})F^{n^{2}-2i}E^{n^{2}-2i+1}F\gamma_{2i,j}\\
&=&q^{-(n^{2}-2i+1)}\prod_{t=n^{2}-1}^{n^{2}-2i}t_{q^{-1}}(1-q^{1-t-2i})F^{n^{2}-2i}\widetilde{\beta}_{2i,j}.
 \end{eqnarray*}
 Since $F^{n^{2}-2i}\widetilde{\beta}_{2i,j}$ is clearly a nonzero multiple of $\widetilde{\alpha}_{2i,j}$, the lemma is proved.
\end{proof}

\begin{corollary}\label{c2.9} For $1\leq i\leq \frac{n^{2}}{2}$ and $j=0,1$, the vectors in $\{E^{l}\alpha_{2i,j},E^{l}\gamma_{2i,j}|0\leq l\leq n^{2}-1\}$
are linear independent.
\end{corollary}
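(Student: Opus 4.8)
The plan is to use that every listed vector is a simultaneous eigenvector of the two commuting elements $A:=\k^{-1}\hat{\k}$ and $B:=\k\hat{\k}^{\frac n2}$, to kill a hypothetical relation one weight at a time, and then to separate the few vectors of a common weight by means of the left $F$-action. By Corollary \ref{c2.1}, $E^{l}\alpha_{2i,j}$ has $A$-eigenvalue $q^{2i-2-2l}$ and $B$-eigenvalue $(-1)^{l+1+j}$; using Lemma \ref{l2.3} together with the observation that $\gamma_{2i,j}$ shares the $A$- and $B$-eigenvalues of $\widetilde{\alpha}_{2i,j}$, the vector $E^{l}\gamma_{2i,j}$ has $A$-eigenvalue $q^{-2i-2l}$ and $B$-eigenvalue $(-1)^{l+j}$. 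The essential difficulty — and the main obstacle — is that $4\mid n$ forces $\frac{n^2}2$ to be even, so $E^{l}(\cdot)$ and $E^{l+\frac{n^2}2}(\cdot)$ have identical $A$- and $B$-eigenvalues; since $A$ and $B$ generate the commutative $\mathbf{u}^{0}$ (this is exactly what the proof of Lemma \ref{l2.2} shows), no element of $\mathbf{u}^{0}$ can separate them. Thus each joint eigenspace holds up to four of our vectors, two of each family, and eigenvalues alone can never finish the argument.

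Granting that $\{E^{l}\alpha_{2i,j}\}_{0\le l\le n^{2}-1}$ is the known basis of the $n^{2}$-dimensional module $N:=\Quslz\alpha_{2i,j}$, it suffices to prove that the images of the $E^{l}\gamma_{2i,j}$ are independent in $W/N$, where $W$ is the span of all $2n^{2}$ vectors; the relation then splits, the $\gamma$-coefficients vanishing first and the $\alpha$-coefficients afterwards by the known basis property. By Lemma \ref{l2.5}, $F$ preserves $W$ and $N$, and modulo $N$ it acts on the $\gamma$-chain as a weighted shift
\[
F\cdot\overline{E^{s}\gamma_{2i,j}}=s_{q^{-1}}\bigl(1-q^{1-s-2i}\bigr)\,\overline{E^{s-1}\gamma_{2i,j}},
\]
whose coefficient vanishes only at $s=n^{2}+1-2i$. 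Hence under $F$ the chain breaks into a lower string $\overline{\gamma_{2i,j}},\dots,\overline{E^{n^{2}-2i}\gamma_{2i,j}}$ and an upper string $\overline{E^{n^{2}+1-2i}\gamma_{2i,j}},\dots,\overline{E^{n^{2}-1}\gamma_{2i,j}}$, while $E$ shifts upward with no break.

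Next I would show that none of these images vanishes. The top image is nonzero: $F^{n^{2}-1}$ annihilates all of $N$ (each $F$-string inside $N$ has length $<n^{2}$), whereas $F^{n^{2}-1}E^{n^{2}-1}\gamma_{2i,j}\ne0$ by Corollary \ref{c2.7}, so $E^{n^{2}-1}\gamma_{2i,j}\notin N$. The bottom image $\overline{\gamma_{2i,j}}$ is nonzero because $F\gamma_{2i,j}=\beta_{2i,j}=E^{2i-2}\alpha_{2i,j}$ cannot be the $F$-image of any weight-matching element of $N$: the only basis vectors of $N$ carrying the weight of $\gamma_{2i,j}$ are $\widetilde{\alpha}_{2i,j}=E^{2i-1}\alpha_{2i,j}$, which $F$ kills (the single downward arrow), and $E^{2i-1+\frac{n^2}2}\alpha_{2i,j}$, whose $F$-image is a nonzero multiple of the \emph{different} basis vector $E^{2i-2+\frac{n^2}2}\alpha_{2i,j}$. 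Propagating along the two strings by $E$ and $F$ then gives $\overline{E^{l}\gamma_{2i,j}}\ne0$ for every $l$.

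Finally, assign to each nonzero image its $F$-depth, the distance to the bottom of its string, and decompose a relation $\sum_{l}b_{l}\,\overline{E^{l}\gamma_{2i,j}}=0$ by $A$-eigenspaces. Each eigenspace contributes only $\overline{E^{l}\gamma_{2i,j}}$ and $\overline{E^{l+\frac{n^2}2}\gamma_{2i,j}}$, and these have distinct $F$-depths: the depths differ by $\frac{n^2}2$ when the two lie in one string, and otherwise differ because $2i\ne\frac{n^2}2+1$ (even versus odd). Distinct depths force linear independence — apply the appropriate power of $F$, which annihilates the shallower image while sending the deeper one to a nonzero vector — so every $b_{l}=0$, whence $\dim W/N=n^{2}$, $\dim W=2n^{2}$, and the claim follows. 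The crux throughout is this $F$-depth bookkeeping: it is precisely the two single downward arrows (the vanishing of $s_{q^{-1}}(1-q^{2i-1-s})$ at $s=2i-1$ and of its $\gamma$-analogue at $s=n^{2}+1-2i$) together with the parity input $2i\ne\frac{n^2}2+1$ that repairs the eigenvalue degeneracy of the first paragraph.
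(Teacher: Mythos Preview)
Your proof is correct, but it takes a significantly more laborious route than the paper's. The paper exploits a finer invariant that you overlook: the \emph{height} grading of Section~4, the $\mathbb{Z}$-grading coming from the $\sigma$-eigenspace decomposition $\Quslz=\bigoplus_{s}\Quslz_{s}$. Under this grading $E^{l}\alpha_{2i,j}\in\Quslz_{\,l+1-n^{2}}$ and $E^{l}\gamma_{2i,j}\in\Quslz_{\,l+2i-n^{2}}$, so vectors of different height are automatically independent; the only collisions are the pairs $E^{t}\gamma_{2i,j}$ and $E^{t+2i-1}\alpha_{2i,j}$ for $0\le t\le n^{2}-2i$, and their independence follows from the base case $t=0$ (independent ``by definition'' of $\gamma_{2i,j}$) together with the injectivity of left multiplication by $E$ on this range (guaranteed by Corollary~\ref{c2.7}). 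That is the entire argument.

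Your approach instead uses the $\k^{-1}\hat{\k}$-- and $\k\hat{\k}^{n/2}$--eigenvalues, which constitute a strictly coarser invariant (they only see $l$ modulo $n^{2}/2$), and then repairs the resulting degeneracy with the $F$-depth bookkeeping. This works, and the parity observation $2i\ne \tfrac{n^{2}}{2}+1$ is a nice touch; what it buys you is that everything stays within the tools of Section~2, avoiding the forward reference to the height grading. What the paper's proof buys is brevity: three lines instead of three paragraphs. The conceptual point worth absorbing is that $\mathbf{u}^{0}$ alone cannot separate the basis --- you diagnosed this correctly --- but the ambient algebra carries the additional $\sigma$-grading, and that grading is exactly the missing separator.
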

\begin{proof} Corollary \ref{c2.7} entails that all vectors in $\{E^{l}\alpha_{2i,j},E^{l}\gamma_{2i,j}|0\leq l\leq n^{2}-1\}$
are nonzero.  By definition, $\gamma_{2i,j}$ and $E^{2i-1}\alpha_{2i,j}$ are linear independent. From this, we deduce that all vectors
in  $\{E^{s}\alpha_{2i,j},E^{t}\gamma_{2i,j}|0\leq t\leq n^{2}-2i,\;2i-1\leq s\leq n^2-1\}$ are linear independent which implies the desired result
 since the other vectors have different heights (see the beginning of
Section 4 for the definition of height).
\end{proof}

For $1\leq i\leq \frac{n^{2}}{2}$ and $j=0,1$, the left ideal $\Quslz\gamma_{2i,j}$
may be represented schematically as:\newpage

\begin{figure}[hbt]
\begin{picture}(200,265)(0,-50)
\put(100 ,150){\makebox(0,0){$\bullet \small{\beta_{2i,j}}$}}
\put(92.5,155){\vector(0,1){15}} \put(87.5,170){\vector(0,-1){15}}
\put(90,175){\makebox(0,0){$\bullet$}}
\put(90,187){\makebox(0,0){$\vdots$}} \put(100
,215){\makebox(0,0){$\bullet \small{\alpha_{2i,j}}$}}
\put(92.5,197){\vector(0,1){15}} \put(87.5,212){\vector(0,-1){15}}
\put(90,193){\makebox(0,0){$\bullet$}}

\put(50,120){\makebox(0,0){$\bullet \gamma_{2i,j}$}}
\put(40,125){\vector(2,1){45}} \put(99,145){\vector(2,-1){40}}
\put(150,120){\makebox(0,0){$\bullet \widetilde{\alpha}_{2i,j}$}}

\put(50 ,55){\makebox(0,0){$\bullet \small{\delta_{2i,j}}$}}
\put(42.5,60){\vector(0,1){15}} \put(37.5,75){\vector(0,-1){15}}
\put(40,80){\makebox(0,0){$\bullet$}}
\put(40,92){\makebox(0,0){$\vdots$}}
\put(42.5,102){\vector(0,1){15}} \put(37.5,117){\vector(0,-1){15}}
\put(40,98){\makebox(0,0){$\bullet$}}

\put(45,98){\vector(4,1){90}} \put(45,55){\vector(4,1){90}}

\put(150 ,55){\makebox(0,0){$\bullet
\small\widetilde{\beta}{_{2i,j}}$}} \put(142.5,60){\vector(0,1){15}}
\put(137.5,75){\vector(0,-1){15}}
\put(140,80){\makebox(0,0){$\bullet$}}
\put(140,92){\makebox(0,0){$\vdots$}}
\put(142.5,102){\vector(0,1){15}}
\put(137.5,117){\vector(0,-1){15}}
\put(140,98){\makebox(0,0){$\bullet$}}

\put(105,25){\makebox(0,0){$\bullet \widetilde{\gamma}_{2i,j}$}}
\put(45,50){\vector(2,-1){45}} \put(105,30){\vector(3,2){31}}

\put(105 ,-42){\makebox(0,0){$\bullet
\small\widetilde{\delta}{_{2i,j}}$}} \put(97.5,-37){\vector(0,1){15}}
\put(92.5,-22){\vector(0,-1){15}}
\put(95,-17){\makebox(0,0){$\bullet$}}
\put(95,-5){\makebox(0,0){$\vdots$}}
\put(97.5,5){\vector(0,1){15}} \put(92.5,20){\vector(0,-1){15}}
\put(95,1){\makebox(0,0){$\bullet$}}
\end{picture}
\end{figure}

where $\delta_{2i,j}=E^{n^{2}-2i}\gamma_{2i,j}, \widetilde{\gamma}_{2i,j}=E^{n^{2}-2i+1}\gamma_{2i,j}$ and
$\widetilde{\delta}_{2i,j}=E^{n^{2}-1}\gamma_{2i,j}$. Put
\begin{equation} \P_{2i,j}:=\Quslz\gamma_{2i,j},
\end{equation}
and
\begin{equation} \S_{2i,j}:=\Soc(\P_{2i,j}),
\end{equation}
the socle of $\P_{2i,j}$. That is, $\S_{2i,j}$ can be represented as:

\begin{figure}[hbt]
\begin{picture}(200,80)(50,0)
\put(150,70){\makebox(0,0){$\bullet \widetilde{\alpha}_{2i,j}$}}
\put(150 ,5){\makebox(0,0){$\bullet
\small\widetilde{\beta}{_{2i,j}}$}}
\put(142.5,10){\vector(0,1){15}}
\put(137.5,25){\vector(0,-1){15}}
\put(140,30){\makebox(0,0){$\bullet$}}
\put(140,42){\makebox(0,0){$\vdots$}}
\put(142.5,52){\vector(0,1){15}}
\put(137.5,67){\vector(0,-1){15}}
\put(140,48){\makebox(0,0){$\bullet$}}
\end{picture}
\end{figure}

It is a simple $\Quslz$-module  isomorphic to the top of $\P_{2i,j}$.

\begin{lemma}\label{l2.6} For $1\leq i,i'\leq \frac{n^{2}}{2}$ and $j,j'=0,1$, $\S_{2i,j}\cong \S_{2i',j'}$ if and only if
$i=i'$ and $j=j'$.
\end{lemma}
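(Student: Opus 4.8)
The plan is to attach to each $\S_{2i,j}$ a numerical invariant of its isomorphism class that recovers the pair $(i,j)$; since the ``if'' direction is trivial, this settles the ``only if'' direction. The displayed picture of $\S_{2i,j}$ exhibits the basis $\{E^{s}\alpha_{2i,j}\mid 2i-1\leq s\leq n^{2}-1\}$, so $\dim \S_{2i,j}=n^{2}-2i+1$, which already separates different values of $i$. It then remains, for a fixed $i$, to separate $j=0$ from $j=1$, and for this I will use the commuting semisimple elements $\k^{-1}\hat{\k}$ and $\k\hat{\k}^{\frac{n}{2}}$.

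First I would single out the highest weight vector of $\S_{2i,j}$. By Corollary \ref{c2.1}, in the socle range one has $FE^{s}\alpha_{2i,j}=s_{q^{-1}}(1-q^{2i-1-s})E^{s-1}\alpha_{2i,j}$, and the scalar $s_{q^{-1}}(1-q^{2i-1-s})$ vanishes exactly at $s=2i-1$: indeed $s_{q^{-1}}\neq0$ for $1\le s\le n^{2}-1$ (as $q^{-s}\neq1$), while $1-q^{2i-1-s}=0$ would force $s\equiv 2i-1\pmod{n^{2}}$, impossible for $2i\le s\le n^{2}-1$ since then $2i-1-s\in(-n^{2},0)$. Hence $F$ is injective on the span of $\{E^{s}\alpha_{2i,j}\mid 2i\le s\le n^{2}-1\}$, so $\Ker(F|_{\S_{2i,j}})$ is the one-dimensional space spanned by $\widetilde{\alpha}_{2i,j}=E^{2i-1}\alpha_{2i,j}$.

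Next I compute the joint eigenvalue of this distinguished vector. Specializing Corollary \ref{c2.1} at $s=2i-1$ gives $\k^{-1}\hat{\k}\,\widetilde{\alpha}_{2i,j}=q^{-2i}\widetilde{\alpha}_{2i,j}$ and $\k\hat{\k}^{\frac{n}{2}}\widetilde{\alpha}_{2i,j}=(-1)^{j}\widetilde{\alpha}_{2i,j}$. Any isomorphism $f\colon\S_{2i,j}\xrightarrow{\ \sim\ }\S_{2i',j'}$ is $\Quslz$-linear, hence commutes with $F$, $\k^{-1}\hat{\k}$ and $\k\hat{\k}^{\frac{n}{2}}$; therefore it carries $\Ker(F|_{\S_{2i,j}})$ isomorphically onto $\Ker(F|_{\S_{2i',j'}})$ and preserves both eigenvalues, forcing $q^{-2i}=q^{-2i'}$ and $(-1)^{j}=(-1)^{j'}$. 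Since $q$ is a primitive $n^{2}$-th root of unity, $q^{2}$ has order $\frac{n^{2}}{2}$, so the first equality together with $1\le i,i'\le\frac{n^{2}}{2}$ yields $i=i'$, and the second with $j,j'\in\{0,1\}$ yields $j=j'$.

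I expect the only genuinely delicate point to be the one-dimensionality of $\Ker(F|_{\S_{2i,j}})$, i.e.\ that the highest weight vector is unique up to scalar so that its weight is a bona fide isomorphism invariant; this is precisely the non-vanishing analysis carried out above. A variant relying only on manifest invariants would replace this step by combining $\dim\S_{2i,j}=n^{2}-2i+1$ with the trace $\operatorname{tr}\!\big(\k\hat{\k}^{\frac{n}{2}}\mid\S_{2i,j}\big)=\sum_{s=2i-1}^{n^{2}-1}(-1)^{s+1+j}=(-1)^{j}$, an alternating sum of an odd number of signs; the dimension then pins down $i$ and the sign of the trace pins down $j$.
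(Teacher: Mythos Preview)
Your argument is correct, and in substance it coincides with the paper's proof. The paper simply observes that $\dim\S_{2i,j}=n^{2}-2i+1$ forces $i=i'$, and then distinguishes $j$ from $j'$ ``by comparing the $\k\hat{\k}^{\frac{n}{2}}$-eigenvectors''; your alternative at the end (dimension plus the trace $\operatorname{tr}(\k\hat{\k}^{\frac{n}{2}}\mid\S_{2i,j})=(-1)^{j}$) is exactly this, made precise. Your main line---isolating the one-dimensional $\Ker(F\mid\S_{2i,j})=k\widetilde{\alpha}_{2i,j}$ and reading off its joint $(\k^{-1}\hat{\k},\k\hat{\k}^{\frac{n}{2}})$-eigenvalue $(q^{-2i},(-1)^{j})$---is a mild refinement of the same idea: rather than comparing the full eigenvalue multiset you pin down a canonical vector first. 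This buys you a self-contained invariant that recovers $i$ and $j$ simultaneously (so the preliminary dimension count becomes optional), at the cost of the extra nonvanishing check for $s_{q^{-1}}(1-q^{2i-1-s})$, which you carried out correctly.
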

\begin{proof} It it not hard to see that $\dim \S_{2i,j}=n^{2}-2i+1$. If $\S_{2i,j}\cong \S_{2i',j'}$, then $\dim \S_{2i,j}
=\dim \S_{2i',j'}$ and so $i=i'$. By comparing the $\k\hat{\k}^{\frac{n}{2}}$-eigenvectors, we have $j=j'$.
\end{proof}

\begin{lemma}\label{l2.7} For $1\leq i\leq \frac{n^{2}}{2}$ and $j=0,1$, we have in $\Quslz$:
$$E^{n^{2}-1}\alpha_{2i,j}E^{n^{2}-2i}\neq 0.$$
\end{lemma}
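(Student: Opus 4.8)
The plan is to exhibit a module on which the algebra element $E^{n^2-1}\alpha_{2i,j}E^{n^2-2i}$ acts nontrivially, the natural candidate being the left ideal $\P_{2i,j}=\Quslz\gamma_{2i,j}$ evaluated on its cyclic generator $\gamma_{2i,j}$. Writing $\alpha_{2i,j}=F^{n^2-1}e_{2i,j}$, the element applied to $\gamma_{2i,j}$ equals $E^{n^2-1}F^{n^2-1}e_{2i,j}E^{n^2-2i}\gamma_{2i,j}$. Since $E^{n^2-2i}\gamma_{2i,j}=\delta_{2i,j}$, and since $\gamma_{2i,j}$ shares the $\k^{-1}\hat{\k}$- and $\k\hat{\k}^{n/2}$-eigenvalues of $\widetilde{\alpha}_{2i,j}$, a short application of Lemma \ref{l2.3} shows $\delta_{2i,j}$ lies in exactly the weight space on which $e_{2i,j}$ is the identity; thus the task reduces to proving $E^{n^2-1}F^{n^2-1}\delta_{2i,j}\neq0$ in $\P_{2i,j}$. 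I would stress as motivation that the smaller Verma modules $\Quslz\alpha_{2i,j}$ do \emph{not} detect this element: there the single-arrow gap $F\widetilde{\alpha}_{2i,j}=0$ forces $F^{n^2-1}$ to vanish on the relevant vector, which is precisely why the larger module $\P_{2i,j}$, with its extra $\gamma$-string, must be used.

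For the computation I would work in the linearly independent family $\{a_s:=E^s\alpha_{2i,j},\ c_s:=E^s\gamma_{2i,j}\mid 0\le s\le n^2-1\}$ from Corollary \ref{c2.9}, with the $F$-action recorded by Corollary \ref{c2.1} on the $a$-string and by Lemma \ref{l2.5} on the $c$-string, namely $Fa_s=s_{q^{-1}}(1-q^{2i-1-s})a_{s-1}$ and $Fc_s=q^{-s}a_{s+2i-2}+s_{q^{-1}}(1-q^{1-s-2i})c_{s-1}$. Because $E^{n^2-1}$ annihilates every $a_s$ and $c_s$ except $a_0$ and $c_0$, on which it returns $\widetilde{\beta}_{2i,j}$ and $\widetilde{\delta}_{2i,j}$ respectively, the whole expression collapses to $E^{n^2-1}F^{n^2-1}\delta_{2i,j}=\lambda\,\widetilde{\beta}_{2i,j}+\nu\,\widetilde{\delta}_{2i,j}$, where $\lambda$ and $\nu$ are the coefficients of $a_0$ and $c_0$ in $F^{n^2-1}\delta_{2i,j}$.

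The crux is a path analysis of $F^{n^2-1}$ starting from $\delta_{2i,j}=c_{n^2-2i}$, in the same spirit as Corollary \ref{c2.7}. Each application of $F$ either advances along the $c$-string or branches onto the $a$-string through the term $Fc_0=\beta_{2i,j}=a_{2i-2}$; once on the $a$-string one can only descend, and the gap $Fa_{2i-1}=0$ kills every path forced to pass through $a_{2i-1}$. I would check that $\nu=0$, since staying on the $c$-string for all $n^2-1$ steps overshoots it, and that the \emph{unique} surviving path to $a_0$ must remain on the $c$-string for the first $n^2-2i$ steps, branch at $c_0$ to $a_{2i-2}$ (landing just below the gap), and then descend to $a_0$; every earlier branch forces the descent through $a_{2i-1}$ and contributes nothing. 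Hence $\lambda$ is the single product
\[
\lambda=\Big(\prod_{s=1}^{n^2-2i}s_{q^{-1}}(1-q^{1-s-2i})\Big)\Big(\prod_{s=1}^{2i-2}s_{q^{-1}}(1-q^{2i-1-s})\Big).
\]

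The main obstacle is this combinatorial bookkeeping, i.e. establishing that exactly one path survives the gap; once that is settled, the nonvanishing of $\lambda$ is routine. I would finish by verifying each factor is nonzero: $s_{q^{-1}}\neq0$ for $1\le s\le n^2-1$ because $q$ has order $n^2$, while $1-q^{1-s-2i}$ and $1-q^{2i-1-s}$ vanish only for $s\equiv 1-2i$, respectively $s\equiv 2i-1\pmod{n^2}$, and neither residue meets the index range of the corresponding product. Therefore $\lambda\neq0$, so $E^{n^2-1}\alpha_{2i,j}E^{n^2-2i}$ carries $\gamma_{2i,j}$ to $\lambda\,\widetilde{\beta}_{2i,j}\neq0$ in $\P_{2i,j}$, whence the element is nonzero in $\Quslz$.
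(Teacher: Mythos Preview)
Your argument is correct, but it takes a different route from the paper. The paper never passes to a module: it computes the element directly in the algebra by first commuting the idempotent $e_{2i,j}$ past $E^{n^2-2i}$ to obtain $E^{n^2-1}F^{n^2-1}E^{n^2-2i}e_{\overline{n^2-2i},j}$, and then applies the fourth identity of Lemma~\ref{l2.4}, $F^{s}E=q^{-s}EF^{s}+\cdots$, repeatedly to push each factor $E$ leftward; at every step the $EF^{s}$-term dies because it would produce $E^{n^2}=0$. This yields a nonzero scalar multiple of $E^{n^2-1}F^{2i-1}e_{\overline{n^2-2i},j}$, which is manifestly nonzero by the triangular decomposition. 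Your approach instead evaluates the element on $\gamma_{2i,j}$ inside $\P_{2i,j}$ and exploits the module structure already assembled in Corollary~\ref{c2.1}, Lemma~\ref{l2.5} and Corollary~\ref{c2.9}. The two computations produce essentially the same product of $q$-numbers, but the paper's version is logically leaner (it uses only Lemma~\ref{l2.4} and the PBW basis), whereas your version gives a more transparent ``path through a gap'' picture at the cost of invoking the whole $\P_{2i,j}$ machinery that was developed just before. Either way the key mechanism is the same: the vanishing factor $1-q^{0}$ at index $2i-1$ kills all but one contribution.
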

\begin{proof} By using the fourth formula in Lemma \ref{l2.4} repeatedly, we have:
\begin{eqnarray*}
E^{n^{2}-1}\alpha_{2i,j}E^{n^{2}-2i}&=&E^{n^{2}-1}F^{n^{2}-1}E^{n^{2}-2i}e_{\overline{n^{2}-2i},j}\\
&=&E^{n^{2}-1}(q^{-(n^{2}-1)}EF^{n^{2}-1}+(n^{2}-1)_{q^{-1}}F^{n^{2}-2}-(n-1)_{q}F^{n^{2}-2}\k^{-1}\hat{\k})\\
&&E^{n^{2}-2i-1}e_{\overline{n^{2}-2i},j}\\
&=&E^{n^{2}-1}((n^{2}-1)_{q^{-1}}-(n^{2}-1)_{q}q^{2i+2})F^{n^{2}-2}E^{n^{2}-2i-1}e_{\overline{n^{2}-2i},j}\\
&=&(n^{2}-1)_{q^{-1}}(1-q^{2i})E^{n^{2}-1}F^{n^{2}-2}E^{n^{2}-2i-1}e_{\overline{n^{2}-2i},j}\\
&=&\cdots\\
&=&\prod_{t=1}^{n^{2}-2i}(n^{2}-t)_{q^{-1}}(1-q^{t-1+2i})E^{n^{2}-1}F^{2i-1}e_{\overline{n^{2}-2i},j},
\end{eqnarray*}
where $\overline{i}$ denotes the least positive residue of $i$ modulo $n^{2}$. It is not hard to see that $E^{n^{2}-1}F^{2i-1}e_{\overline{n^{2}-2i},j}\neq 0$.
\end{proof}

\begin{corollary}\label{c2.8} The right multiplication by $E^{h}$ defines an isomorphism $\P_{2i,j}\stackrel{\cong}{\To} \P_{2i,j}E^{h}$
for $0\leq h\leq n^{2}-2i$.
\end{corollary}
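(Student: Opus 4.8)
The plan is to realize right multiplication by $E^{h}$ as a surjective homomorphism of \emph{left} $\Quslz$-modules and then to prove injectivity by exploiting the simplicity of the socle $\S_{2i,j}$. Because left and right multiplications in an associative algebra commute, the assignment $\rho_{h}\colon \P_{2i,j}\To \Quslz$, $x\mapsto xE^{h}$, satisfies $\rho_{h}(ax)=a\rho_{h}(x)$ for all $a\in \Quslz$; hence $\rho_{h}$ is a morphism of left $\Quslz$-modules whose image is exactly $\P_{2i,j}E^{h}$. Thus $\rho_{h}$ is automatically surjective onto its target, and the whole statement reduces to showing $\Ker \rho_{h}=0$.

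First I would observe that $\Ker \rho_{h}$ is a left submodule of $\P_{2i,j}$. Since $\Quslz$ is finite dimensional, every nonzero submodule of $\P_{2i,j}$ contains a simple submodule, and by construction the socle $\S_{2i,j}=\Soc(\P_{2i,j})$ is simple. Consequently $\S_{2i,j}$ is the unique minimal submodule, so it is contained in every nonzero submodule; in particular $\Ker \rho_{h}=0$ as soon as $\rho_{h}$ does not vanish on $\S_{2i,j}$.

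It then remains to check that $\rho_{h}$ is nonzero on $\S_{2i,j}$ for every $0\leq h\leq n^{2}-2i$. Here I would use the bottom generator $\widetilde{\beta}_{2i,j}=E^{n^{2}-1}\alpha_{2i,j}$, which lies in $\S_{2i,j}$ (see the diagram of $\S_{2i,j}$). Lemma \ref{l2.7} gives $\widetilde{\beta}_{2i,j}E^{n^{2}-2i}\neq 0$. Since $\widetilde{\beta}_{2i,j}E^{n^{2}-2i}=(\widetilde{\beta}_{2i,j}E^{h})E^{n^{2}-2i-h}$, a vanishing $\widetilde{\beta}_{2i,j}E^{h}=0$ would force the left-hand side to vanish as well; hence $\rho_{h}(\widetilde{\beta}_{2i,j})=\widetilde{\beta}_{2i,j}E^{h}\neq 0$ for all $h$ in the stated range. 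Therefore $\Ker \rho_{h}$ meets $\S_{2i,j}$ trivially, so $\Ker \rho_{h}=0$ and $\rho_{h}$ is an isomorphism onto $\P_{2i,j}E^{h}$.

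The only genuinely non-formal input is Lemma \ref{l2.7}, whose inductive commutation computation has already been carried out; the rest is the standard fact that a map out of a module with simple (hence essential) socle is injective once it is injective on the socle. The one point to watch is the orientation: one must ensure that right multiplication is used throughout (so that $\rho_{h}$ is a left-module map), and that the chosen socle vector $\widetilde{\beta}_{2i,j}$ is annihilated by no $E^{h}$ with $h\leq n^{2}-2i$, which is precisely the content of Lemma \ref{l2.7} combined with the factorization above.
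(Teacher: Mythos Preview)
Your argument is correct and follows essentially the same route as the paper: the paper's one-line proof first reduces to the maximal exponent $h=n^{2}-2i$ (since $\rho_{n^{2}-2i}$ factors through $\rho_{h}$) and then invokes Lemma~\ref{l2.7}, with the simple-socle argument left implicit. Your version makes the ``simple socle $\Rightarrow$ essential socle $\Rightarrow$ injectivity'' step explicit and applies the factorization $E^{n^{2}-2i}=E^{h}E^{n^{2}-2i-h}$ directly to each $h$, but the ingredients and logic are the same.
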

\begin{proof} It is enough to show that the right multiplication by $E^{n^{2}-2i}$ is a monomorphism, but this is a direct consequence of Lemma \ref{l2.7}.
\end{proof}

\begin{theorem}\label{t2.9} As a left $\Quslz$-module, we have:
$$\Quslz=\bigoplus_{j=0}^{1}\bigoplus_{ i=1}^{\frac{n^{2}}{2}}\bigoplus_{h=0}^{n^{2}-2i}\P_{2i,j}E^{h}.$$
\end{theorem}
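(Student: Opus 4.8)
The plan is to combine a dimension count with a proof that the right-hand sum is direct: since each summand is a left submodule, once we know the sum is direct and that the total dimension equals $\dim\Quslz=n^{6}$, equality with $\Quslz$ follows automatically. First note that $\P_{2i,j}E^{h}=\Quslz\gamma_{2i,j}E^{h}$ is a left ideal, since right multiplication commutes with the left regular action, and that by Corollary \ref{c2.8} right multiplication by $E^{h}$ yields an isomorphism $\P_{2i,j}\cong\P_{2i,j}E^{h}$ for $0\le h\le n^{2}-2i$.

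Next I would pin down dimensions. By Corollary \ref{c2.9} the $2n^{2}$ vectors $\{E^{l}\alpha_{2i,j},E^{l}\gamma_{2i,j}\mid 0\le l\le n^{2}-1\}$ are linearly independent; checking that their span is closed under $E$, under $F$ (Corollary \ref{c2.1} and Lemma \ref{l2.5}), and under $\mathbf{u}^{0}$ shows that it equals $\Quslz\gamma_{2i,j}=\P_{2i,j}$, so $\dim\P_{2i,j}=2n^{2}$. Since the triangular decomposition gives $\dim\Quslz=n^{2}\cdot n^{2}\cdot n^{2}=n^{6}$, and the number of admissible $h$ is $n^{2}-2i+1$, the right-hand side has total dimension $\sum_{j=0}^{1}\sum_{i=1}^{\frac{n^{2}}{2}}(n^{2}-2i+1)\,2n^{2}=n^{6}$, matching $\dim\Quslz$.

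The essential point is therefore directness. Each $\P_{2i,j}E^{h}$ has simple socle $\S_{2i,j}E^{h}$, the image of $\S_{2i,j}=\Soc(\P_{2i,j})$ under the isomorphism of Corollary \ref{c2.8}; a module with simple socle has essential socle, and for submodules with essential socle the sum is direct as soon as the socles are independent. So it suffices to prove that the simple modules $\{\S_{2i,j}E^{h}\}$ are independent inside $\Soc(\Quslz)$. For distinct pairs $(i,j)$ these lie in distinct isotypic components by Lemma \ref{l2.6}, so the problem reduces, for each fixed $(i,j)$, to the linear independence of the lowest-weight vectors $w_{h}:=\widetilde{\beta}_{2i,j}E^{h}$ for $0\le h\le n^{2}-2i$; indeed the lowest weight of the string module $\S_{2i,j}$ has multiplicity one, so it realizes the multiplicity space of the isotypic component.

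To separate the $w_{h}$ I would use the right action of $\k$ and $\hat{\k}$: each $w_{h}$ is a common right eigenvector (since $e_{2i,j}$ is a primitive idempotent of the commutative algebra $\mathbf{u}^{0}$), and its right $\k$- and $\hat{\k}$-eigenvalues are those of $\widetilde{\beta}_{2i,j}$ multiplied by $\q^{-h}$ and $\q^{-h}q^{2h}$ respectively, which together determine $h$ modulo $\frac{n^{2}}{2}$. Hence $w_{h}$ and $w_{h'}$ with $h\not\equiv h'\ (\mathrm{mod}\ \tfrac{n^{2}}{2})$ are automatically independent, and the only possible coincidence is between $w_{h}$ and $w_{h+n^{2}/2}$ when both indices lie in $[0,n^{2}-2i]$. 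Resolving this residual collision is the main obstacle, and it is handled by nilpotency: since $0\le h+\tfrac{n^{2}}{2}\le n^{2}-2i$, Corollary \ref{c2.8} (resting on Lemma \ref{l2.7}) makes right multiplication by $E^{h+n^{2}/2}$ injective on $\P_{2i,j}$, so $w_{h+n^{2}/2}=w_{h}E^{n^{2}/2}\neq 0$; applying $E^{n^{2}/2}$ to a relation $a\,w_{h}+b\,w_{h+n^{2}/2}=0$ and using $w_{h}E^{n^{2}}=0$ forces first $a=0$ and then $b=0$. This proves all the socles independent, so $\sum\P_{2i,j}E^{h}$ is direct, and the dimension count of the second step then shows it is all of $\Quslz$.
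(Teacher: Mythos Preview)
Your proof is correct and takes a route that differs from the paper's in the crucial directness step, while sharing the overall architecture (dimension count plus reduction to directness of the left ideals $\P_{2i,j}E^{h}$).

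The paper argues pairwise: it shows that any two distinct summands $\P_{2i,j}E^{h}$ and $\P_{2i',j'}E^{h'}$ intersect trivially (same socle forces $i=i'$, $j=j'$ by Lemma~\ref{l2.6}, and then $h=h'$), and then passes to joint independence by an induction on the number of summands. Your approach instead reduces directness to the linear independence of the simple socles $\S_{2i,j}E^{h}$, splits off distinct $(i,j)$ via isotypic components, and for fixed $(i,j)$ separates the vectors $w_{h}=\widetilde{\beta}_{2i,j}E^{h}$ by their right $\mathbf{u}^{0}$-eigenvalues (which pin down $h$ modulo $n^{2}/2$) together with a nilpotency argument (right-multiplying by $E^{n^{2}/2}$) to resolve the single residual collision. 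Your multiplicity-space reduction is justified because $\widetilde{\beta}_{2i,j}$ is the unique (up to scalar) vector of $\S_{2i,j}$ killed by $E$, so $\phi\mapsto\phi(\widetilde{\beta}_{2i,j})$ embeds $\Hom(\S_{2i,j},-)$ into the ambient module; linear independence of the $w_{h}$ then forces independence of the corresponding simple submodules.

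What each approach buys: the paper's argument is shorter but its inductive step (``$\Soc(M_{n+1})=\Soc(M_{l})$ for some $l$'') is delicate precisely when several $M_{l}$ have isomorphic socles, i.e., when only $h$ varies; your explicit separation of the $w_{h}$ is exactly what is needed to make that step rigorous. Conversely, one could shorten your argument by noting that the $w_{h}$ have pairwise distinct \emph{heights} (namely $h$), which already gives their linear independence without invoking right $\k,\hat\k$-eigenvalues---though your eigenvalue/nilpotency computation is perfectly valid as well.
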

\begin{proof} By counting the dimensions of both sides, we only need to show that the sum $\sum_{i,j,h}\P_{2i,j}E^{h}$ is a direct sum.\\[2mm]
\textbf{Claim.} $\P_{2i,j}E^{h} \bigcap \P_{2i',j'}E^{h'}\neq 0$ if and only if $i=i',j=j'$ and $h=h'$.

\emph{Proof of this claim.} If $\P_{2i,j}E^{h} \bigcap \P_{2i',j'}E^{h'}\neq 0$, then they have the same socle. Since $\P_{2i,j}\cong \P_{2i,j}E^{h}$
and $\P_{2i',j'}\cong \P_{2i',j'}E^{h'}$, the socles of $\P_{2i,j}$ and $\P_{2i',j'}$ are isomorphic. Therefore, Lemma \ref{l2.6} implies
$i=i'$ and $j=j'$. Consequently, $h=h'$.

Inductively, we assume that the sum of any $n$ terms of $\P=\{\P_{2i,j}E^{h_{i}}|1\leq i\leq \frac{n^{2}}{2},0\leq h_{i}\leq n^{2}-2i,0\leq j\leq 1\}$
is a direct sum. We show the conclusion for $n+1$ terms. Take $M_{1},\ldots,M_{n+1}\in \P$ and assume that $\sum_{l=1}^{n+1}M_{l}$ is not
a direct sum. Then there is a $l$, say $n+1$, such that $M_{n+1}\bigcap \sum_{l=1}^{n}M_{l}=M_{n+1}\bigcap \bigoplus_{l=1}^{n}M_{l} \neq 0$.
Therefore, $\Soc (M_{n+1})=\Soc (M_{l})$ for some $1\leq l\leq n$ and thus $M_{n+1}\bigcap M_{l}\neq 0$, which is absurd by the Claim.
\end{proof}

Now we have the following conclusions.
\begin{corollary}\label{Cor2.14} 
\begin{enumerate}
\item[(a)] $\{\P_{2i,j}|1\leq i\leq \frac{n^{2}}{2},j=0,1\}$ forms a complete set of non-isomorphic indecomposable projective $\Quslz$-modules.
\item[(b)] $\{\S_{2i,j}|1\leq i\leq \frac{n^{2}}{2},j=0,1\}$ forms a complete set of non-isomorphic simple $\Quslz$-modules.
\item[(c)] $\Quslz$ has $n^{2}$  non-isomorphic indecomposable projective modules and every indecomposable projective module is has dimension $2n^{2}$.
\item[(d)] $\Quslz$ has $n^{2}$  non-isomorphic simple modules and every simple module is of odd dimension.
\item[(e)] $\P_{2i,j}$ and $\P_{2i',j'}$ belong to the same block if and only if $2i+2i'=n^{2}+2$ and $j+j'=1$.
\item[(f)] The number of blocks of $\Quslz$ is $\frac{n^{2}}{2}$  and each block has dimension $2n^{4}$. Moreover, every block of $\Quslz$ is Morita equivalent to the following basic algebra $\Lambda$: 
\begin{figure}[hbt]
\begin{picture}(200,60)(0,0)

 \put(0,30){\makebox(0,0){$\cdot$}}
  \put(5,32){\vector(1,0){30}} \put(20,35){\makebox(0,0){$x_{1}$}}
  \put(5,42){\vector(1,0){30}}\put(20,45){\makebox(0,0){$y_{1}$}}

  \put(40,30){\makebox(0,0){$\cdot$}} \put(35,28){\vector(-1,0){30}} \put(20,22){\makebox(0,0){$x_{2}$}}
  \put(35,18){\vector(-1,0){30}}\put(20,12){\makebox(0,0){$y_{2}$}}

\put(80,35){\makebox(0,0){$x_{s}x_{t}=y_{s}y_{t}$}}
\put(180,30){\makebox(0,0){$\textrm{for}\; 1\leq s\neq t\leq 2.$}}
\put(90,25){\makebox(0,0){$x_{s}y_{t}=y_{s}x_{t}=0$}}
\end{picture}
\end{figure}
\item[(g)] $\Quslz$ is of tame representation type.
\end{enumerate}
\end{corollary}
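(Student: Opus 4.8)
The plan is to read parts (a)--(d) straight off the decomposition of the regular module already established, extract the block structure (e)--(f) from the Loewy series of the $\P_{2i,j}$, and then quote tameness of the resulting basic algebra for (g).

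For (a) and (b) I would argue as follows. Theorem \ref{t2.9} writes $_{\Quslz}\Quslz=\bigoplus_{i,j,h}\P_{2i,j}E^{h}$, and Corollary \ref{c2.8} gives $\P_{2i,j}E^{h}\cong\P_{2i,j}$, so each $\P_{2i,j}$ is a summand of the regular module and hence projective; since $\Soc\P_{2i,j}=\S_{2i,j}$ is simple, $\P_{2i,j}$ is indecomposable, and Lemma \ref{l2.6} shows the socles, hence the $\P_{2i,j}$, are pairwise non-isomorphic. Completeness is Krull--Schmidt: any indecomposable projective is a summand of $_{\Quslz}\Quslz$, thus isomorphic to some $\P_{2i,j}E^{h}\cong\P_{2i,j}$. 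Part (b) then follows from the standard bijection $P\leftrightarrow P/\operatorname{rad}P$ between indecomposable projectives and simples: the tops $\P_{2i,j}/\operatorname{rad}\P_{2i,j}\cong\S_{2i,j}$ exhaust the simples, and they are distinct again by Lemma \ref{l2.6}.

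For (c)--(d) the counts are immediate ($1\le i\le\frac{n^{2}}{2}$, $j\in\{0,1\}$ gives $n^{2}$ objects), and $\dim\S_{2i,j}=n^{2}-2i+1$ is odd because $n$, hence $n^{2}$, is even. To see $\dim\P_{2i,j}=2n^{2}$ I would first compute $\dim\Quslz=n^{6}$ from the triangular decomposition $\Quslz=\mathbf{u}^{-}\mathbf{u}^{0}\mathbf{u}^{+}$, where $\dim\mathbf{u}^{\pm}=n^{2}$ (as $E^{n^{2}}=F^{n^{2}}=0$) and $\dim\mathbf{u}^{0}=n^{2}$ since $\mathbf{u}^{0}$ is split semisimple with $n^{2}$ primitive idempotents (Lemma \ref{l2.2}). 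Corollary \ref{c2.9} exhibits $2n^{2}$ linearly independent vectors, all lying in $\P_{2i,j}$ (the $E^{l}\gamma_{2i,j}$ trivially, and the $E^{l}\alpha_{2i,j}$ because $\beta_{2i,j}=F\gamma_{2i,j}\in\P_{2i,j}$ generates the whole $\alpha$-string under the $E,F$-action), so $\dim\P_{2i,j}\ge2n^{2}$; since $\P_{2i,j}$ occurs with multiplicity $n^{2}-2i+1$ in $_{\Quslz}\Quslz$ and $\sum_{j=0}^{1}\sum_{i=1}^{n^{2}/2}(n^{2}-2i+1)\,2n^{2}=n^{6}$, every inequality is forced to be an equality, giving $\dim\P_{2i,j}=2n^{2}$. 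Next I would read the Loewy series of $\P_{2i,j}$ off its schematic picture: its radical series has length three, with top and socle both $\S_{2i,j}$ (dimension $n^{2}-2i+1$), while the heart is two copies of the simple of dimension $2i-1$; comparing $\k\hat{\k}^{\frac{n}{2}}$-eigenvalues via Corollary \ref{c2.1} and Lemma \ref{l2.3} identifies this simple as $\S_{2i',j'}$ with $i'=\frac{n^{2}}{2}+1-i$ and $j'=1-j$, i.e. $2i+2i'=n^{2}+2$ and $j+j'=1$. Because $4\mid n$ the equation $4i=n^{2}+2$ has no solution, so $(i,j)\ne(i',j')$ and the pairing $(i,j)\leftrightarrow(i',j')$ is a fixed-point-free involution. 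Since all composition factors of $\P_{2i,j}$ and $\P_{2i',j'}$ lie in $\{\S_{2i,j},\S_{2i',j'}\}$ and these two projectives share them, $\{\S_{2i,j},\S_{2i',j'}\}$ is a full block, which proves (e); grouping the $n^{2}$ simples into such pairs yields $\frac{n^{2}}{2}$ blocks, and each block $B$ has dimension $\sum_{S\in B}(\dim S)(\dim P(S))=2n^{2}[(n^{2}-2i+1)+(2i-1)]=2n^{4}$.

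To finish (f) I would compute the Ext-quiver of $B$: the two vertices $\S_{2i,j},\S_{2i',j'}$, no loops, and---since $\operatorname{rad}\P_{2i,j}/\operatorname{rad}^{2}\P_{2i,j}\cong\S_{2i',j'}^{\oplus2}$ and symmetrically---exactly two arrows in each direction, which is the quiver of $\Lambda$. The relations I would obtain by composing the homomorphisms $\P_{2i,j}\rightleftarrows\P_{2i',j'}$: both length-two paths through the two heart summands map onto the one-dimensional socle, giving $x_{s}x_{t}=y_{s}y_{t}$ after rescaling the arrows, while the crossed composites vanish, giving $x_{s}y_{t}=y_{s}x_{t}=0$; a dimension check ($\dim\Lambda=8$ versus composition length $4$ of each $P(S)$) shows no relation is missing, so $B$ is Morita equivalent to $\Lambda$. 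Finally $\Lambda$ is special biserial and evidently not representation-finite, hence tame, which gives (g). The step I expect to cost real work is exactly this determination of the relations: the quiver and all the dimensions are forced by the preceding structural results, but showing the admissible ideal is precisely $(x_{s}x_{t}-y_{s}y_{t},\,x_{s}y_{t},\,y_{s}x_{t})$---and not another ideal with the same Hilbert series, such as one grouping the composites differently---requires explicitly computing the composites of the maps between the $\P$'s from the $E,F$-action on the basis of Corollary \ref{c2.9}, and then normalizing away the scalars that appear.
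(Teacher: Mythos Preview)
Your approach is essentially the same as the paper's: deduce (a)--(d) from the regular decomposition (Theorem~\ref{t2.9}) together with Corollary~\ref{c2.9} and Lemma~\ref{l2.6}, read (e) off the Loewy layers of $\P_{2i,j}$, and then identify the basic algebra of each block with $\Lambda$ to conclude (f)--(g). The paper is terser---for (b) it invokes the Frobenius property (projective $=$ injective) rather than the top/radical bijection, for the relations in (f) it simply cites \cite[Sec.~5]{Su}, and for (g) it appeals to the literature on $\Lambda$ rather than the special-biserial argument---but the logical skeleton is the same, and your more explicit dimension count for $\dim\P_{2i,j}=2n^{2}$ and your remarks on the involution $(i,j)\leftrightarrow(i',j')$ are consistent with what the paper leaves implicit. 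One small redundancy: the pairing is already fixed-point-free because $j+j'=1$ forces $j\ne j'$, so the observation that $4i=n^{2}+2$ has no solution is not actually needed.
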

\begin{proof} (a) is a direct consequence of Theorem \ref{t2.9}. (a) implies (b) since every projective module is also injective (This follows from
the fact that every finite-dimensional quasi-Hopf algebra is Frobenius). By Corollary 2.9, $\dim \P_{2i,j}=2n^{2}$ and thus we obtain (c).
Statement (d) is clear since $\dim \S_{2i,j}=n^{2}-2i+1$.

Now let $J$ be the Jacobson radical of $\Quslz$. Then we have the following isomorphisms:
\begin{eqnarray*}\P_{2i,0}/J\P_{2i,0}\cong \S_{2i,0},&\;\;\;\;&\P_{2i,1}/J\P_{2i,1}\cong \S_{2i,1},\\
J\P_{2i,0}/J^2\P_{2i,0}\cong \S_{n^2-2i+2,1}\oplus \S_{n^2-2i+2,1},&\;\;\;\;&J\P_{2i,1}/J^2\P_{2i,1}\cong \S_{n^2-2i+2,0}\oplus \S_{n^2-2i+2,0},\\
\Soc(\P_{2i,0})=J^{2}\P_{2i,0}\cong \S_{2i,0},&\;\;\;\;& \Soc (\P_{2i,1})=J^{2}\P_{2i,1}\cong \S_{2i,1}.
\end{eqnarray*}
These imply that $\P_{2i,0}$ and $\P_{n^{2}-2i+2,1}$ belong to the same block for $1\leq i\leq\frac{n^{2}}{2}$. Therefore, Statement (e) follows.

(c)$+$(e) implies that the number of blocks of $\Quslz$ is $\frac{n^{2}}{2}$. Denote by $\B_{2i,j}$ the block containing $\P_{2i,j}$. The representation theory of finite-dimensional algebras tells us  that the dimension of $\B_{2i,0}$ is equal to
$$\dim \S_{2i,0}\dim \P_{2i,0}+\dim \S_{n^2-2i+2,1}\dim \P_{n^2-2i+2,1}=2n^{2}[(n^2-2i+1)+(2i-1)]=2n^{4}.$$
Moreover, the basic algebra of $\B_{2i,0}$ is isomorphic to the opposite algebra of $$\End_{\Quslz}(\P_{2i,0}\oplus \P_{n^{2}-2i+1,1}).$$
 Parallel to \cite[Sec. 5]{Su}, we can easily show that $\End_{\Quslz}(\P_{2i,0}\oplus \P_{n^{2}-2i+1,1})\cong \Lambda$.
 But the opposite algebra of $\Lambda$ is isomorphic to itself. Hence, (f) is proved.

Note that the basic algebra $\Lambda$ was studied extensively, see for example \cite{Erd,Ha,Su,Xiao}. It is known that $\Lambda$  is a tame algebra and thus we obtain the last statement (g).
\end{proof}

Let $\zeta_{l}$ be an $l$-th primitive
root of unity and $m$ a positive integer satisfying $(m,l)=1$. Denote by 
$\textbf{h}(\zeta_{l},m)$ the algebra $k\langle y,x,g\rangle/(x^{l},y^{l},g^{l}-1,gx-\zeta_{l}xg,gy-\zeta_{l}^{m}yg,xy-yx)$. This algebra $\textbf{h}(\zeta_{l},m)$ can be equipped with a Hopf algebra structure
 with comultiplication, antipode and counit given by 
$$\Delta(x)=x\otimes g+1\otimes x,\;\;\;\;\Delta(y)=y\otimes 1+g^{m}\otimes y,\;
\;\;\;\Delta(g)=g\otimes g$$
$$S(x)=-xg^{-1},\;\;S(y)=-g^{-m}y,\;\;S(g)=g^{-1},\;\;\varepsilon(x)=
\varepsilon(y)=0.\;\;\varepsilon(g)=1.$$ 
Note that such a Hopf algebra $\textbf{h}(\zeta_{l},m)$
is called a \emph{book algebra} in \cite{AH}. It is a basic algebra since
$\textbf{h}(q,m)/J_{\textbf{h}(q,m)}$ is a commutative semisimple
algebra. 

\begin{proposition}\label{prop2.15} The basic subalgebra $\B(\Quslz)$ of $\Quslz$ has a Hopf algebra structure such that
$$\B(\Quslz)\cong \textbf{h}(-1,1)\otimes k\mathbbm{Z}_{\frac{n^{2}}{2}}\ \ \ {\rm as\ Hopf\ algebras}.$$
\end{proposition}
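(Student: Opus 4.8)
The plan is to reduce the assertion to a single associative-algebra isomorphism and then transport the Hopf structure. The target $\textbf{h}(-1,1)\otimes k\mathbbm{Z}_{\frac{n^{2}}{2}}$ is a tensor product of two genuine Hopf algebras---the book algebra $\textbf{h}(-1,1)$ and the group algebra $k\mathbbm{Z}_{\frac{n^{2}}{2}}$---and is therefore itself a Hopf algebra. Hence, once an algebra isomorphism $\Psi\colon \B(\Quslz)\to \textbf{h}(-1,1)\otimes k\mathbbm{Z}_{\frac{n^{2}}{2}}$ has been produced, one defines $\D$, $\e$ and $S$ on $\B(\Quslz)$ by pulling the corresponding maps back along $\Psi$; this makes $\Psi$ a Hopf algebra isomorphism by construction and proves the proposition. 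The entire problem is thus the construction of the algebra map $\Psi$.

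First I would compute $\B(\Quslz)$ at the level of algebras. By Corollary \ref{Cor2.14}(f), $\Quslz$ has exactly $\frac{n^{2}}{2}$ blocks, each Morita equivalent to the basic algebra $\Lambda$; since $\Lambda$ is itself basic and the blocks are algebra direct factors of $\Quslz$, the basic algebra of $\Quslz$ is the direct product of the basic algebras of its blocks, so that $\B(\Quslz)\cong \Lambda\times\cdots\times\Lambda$ ($\frac{n^{2}}{2}$ factors). On the other side, $k\mathbbm{Z}_{\frac{n^{2}}{2}}$ is commutative semisimple, hence isomorphic to the product of $\frac{n^{2}}{2}$ copies of $k$, and therefore $\textbf{h}(-1,1)\otimes k\mathbbm{Z}_{\frac{n^{2}}{2}}$ is isomorphic to the direct product of $\frac{n^{2}}{2}$ copies of $\textbf{h}(-1,1)$. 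Matching the factors, the construction of $\Psi$ reduces to one algebra isomorphism $\Lambda\cong \textbf{h}(-1,1)$.

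The remaining step, which is the technical heart of the argument, is to prove $\Lambda\cong \textbf{h}(-1,1)$ (both are $8$-dimensional). Here I would use the orthogonal idempotents $f_{\pm}=\frac{1}{2}(1\pm g)$ of $\textbf{h}(-1,1)$: the relations $g^{2}=1$ and $gx=-xg$, $gy=-yg$ give $xf_{\pm}=f_{\mp}x$ and $yf_{\pm}=f_{\mp}y$, so that $f_{+},f_{-}$ are primitive, sum to $1$, and the Peirce decomposition presents $\textbf{h}(-1,1)$ as the path algebra of the quiver with two vertices and two arrows in each direction, $x_{1}=f_{-}xf_{+}$, $y_{1}=f_{-}yf_{+}$, $x_{2}=f_{+}xf_{-}$, $y_{2}=f_{+}yf_{-}$. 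Expanding $x^{2}=y^{2}=0$ and $xy=yx$ in these terms yields $x_{1}x_{2}=x_{2}x_{1}=y_{1}y_{2}=y_{2}y_{1}=0$ together with $x_{1}y_{2}=y_{1}x_{2}$ and $x_{2}y_{1}=y_{2}x_{1}$. After the relabelling of arrows that interchanges $x_{2}$ and $y_{2}$, this is exactly the defining relation set of $\Lambda$, and a dimension count shows the resulting map is bijective.

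I expect the main obstacle to lie precisely in this identification, and in the fact that the \emph{naive} assignment $x\mapsto x$, $y\mapsto y$ does \emph{not} work: in $\textbf{h}(-1,1)$ the relation $x^{2}=0$ forces $x_{1}x_{2}=0$, whereas in $\Lambda$ the product $x_{1}x_{2}=y_{1}y_{2}$ is nonzero. One must therefore run the Peirce/relation computation carefully and select the relabelling of the four arrows that correctly matches the surviving length-two paths against the vanishing ones; only then does the dimension count close the isomorphism. With $\Psi$ in hand, the transport of structure from the first paragraph finishes the proof.
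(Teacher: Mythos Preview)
Your proposal is correct and follows the same route as the paper: reduce to $\B(\Quslz)\cong\Lambda^{\frac{n^{2}}{2}}$ via Corollary~\ref{Cor2.14}(f), identify $\Lambda\cong\textbf{h}(-1,1)$, and then transport the Hopf structure from $\textbf{h}(-1,1)\otimes k\mathbbm{Z}_{\frac{n^{2}}{2}}$. The paper simply asserts the isomorphism $\Lambda\cong\textbf{h}(-1,1)$ as ``not hard to see'', whereas you supply the explicit Peirce-decomposition argument (including the necessary swap $x_{2}\leftrightarrow y_{2}$), so your write-up is in fact more detailed than the original at that step.
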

\begin{proof} By Corollary \ref{Cor2.14} (f), we have an algebra isomorphism: 
$$\B(\Quslz)\cong \Lambda^{(\frac{n^{2}}{2})}.$$
It is not hard to see that $\Lambda \cong \textbf{h}(-1,1)$. This implies that the basic algebra of each block can be equipped with
a Hopf algebra structure, i.e., the book algebra $\textbf{h}(-1,1)$. Therefore, we have an isomorphism of Hopf algebras:
$$\B(\Quslz)\cong \textbf{h}(-1,1)\otimes k\mathbbm{Z}_{\frac{n^{2}}{2}}.$$
\end{proof}

Contrasting to this, we have the following.
\begin{proposition}\label{prop2.16} There is no Hopf algebra
structure on the basic algebras $\B(\uqslz)$ and  $\B(\Uqslz)$. 
\end{proposition}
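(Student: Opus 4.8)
The plan is to isolate a numerical invariant of basic Hopf algebras that both $\B(\uqslz)$ and $\B(\Uqslz)$ fail to satisfy, namely that \emph{all of their blocks have the same dimension}, and then to contradict this using the existence of a Steinberg module. So I would argue by contradiction: assume $A:=\B(\uqslz)$ (the case $A:=\B(\Uqslz)$ being identical) admits a Hopf algebra structure. Since $A$ is a basic algebra, $A/J(A)$ is commutative semisimple, so over our algebraically closed $k$ all simple $A$-modules are one-dimensional. Dually this means every simple subcoalgebra of $A^{*}$ is one-dimensional, i.e.\ $A^{*}$ is a \emph{pointed} Hopf algebra; this basic/pointed duality is the first step.

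Next I would apply Montgomery's structure theorem for pointed Hopf algebras: the link-indecomposable component of $A^{*}$ containing $1$ is a Hopf subalgebra over which $A^{*}$ is free (Nichols--Zoeller), and the other link-indecomposable components are grouplike translates of it; consequently all link-indecomposable components of $A^{*}$ share a common dimension. Writing the coalgebra decomposition $A^{*}=\bigoplus_{\lambda}C_{\lambda}$ into these components and dualizing gives the algebra decomposition $A=\prod_{\lambda}C_{\lambda}^{*}$ in which each $C_{\lambda}^{*}$ is indecomposable, hence is a block of $A$, with $\dim C_{\lambda}^{*}=\dim C_{\lambda}$. Therefore every block of $A$ would have the same dimension.

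Finally I would invoke the known block structure of $\uqslz$ and $\Uqslz$ from \cite{Su,Xiao}. Each of these algebras has a Steinberg module, that is, a simple projective module; its block is a full matrix algebra and contributes a one-dimensional factor $k$ to the basic algebra $A$. By contrast the principal block, containing the trivial module, is non-semisimple, so its contribution to $A$ has dimension strictly larger than $1$. Thus the blocks of $A$ do not all have the same dimension, contradicting the previous paragraph, and no Hopf algebra structure on $\B(\uqslz)$ or $\B(\Uqslz)$ can exist. This is precisely the point of contrast with $\Quslz$: by Corollary \ref{Cor2.14} all of its blocks are Morita equivalent to $\Lambda$ and hence equidimensional, which is what made the Hopf structure of Proposition \ref{prop2.15} possible.

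I expect the middle step to be the main obstacle. The delicate part is to pin down the dictionary between the coalgebra-theoretic link-indecomposable components of the hypothetical $A^{*}$ and the ring-theoretic blocks of $A$, and to be sure Montgomery's equidimensionality genuinely applies here (in particular that the identity component is a Hopf subalgebra so that freeness forces the uniform dimension). Once this block-dimension invariant is established, the clash with the one-dimensional Steinberg factor is immediate and requires only the elementary observation that a simple projective module sits in a semisimple block.
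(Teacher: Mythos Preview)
Your argument is correct and shares the paper's overall architecture---assume a Hopf structure, derive a uniformity constraint on the blocks of the basic algebra, and violate it with the one-dimensional Steinberg block---but the mechanism you use to force uniformity is genuinely different.

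The paper does not dualize. Instead it quotes the Green--Solberg theorem \cite[Thm.~2.3]{GS}: the Ext-quiver of a finite-dimensional \emph{basic} Hopf algebra is a covering quiver (equivalently a Hopf quiver in the sense of \cite{CR}), and for such quivers the connected components, i.e.\ the Ext-quivers of the individual blocks, are mutually isomorphic as directed graphs. Since the Steinberg block contributes an Ext-quiver consisting of a single vertex with no arrows, every block would then be a simple algebra and $\B(H)$ would be semisimple, contradicting the non-semisimplicity of $\uqslz$ and $\Uqslz$. Your route passes instead through the pointed dual $A^{*}$ and Montgomery's theorem that the link-indecomposable components are grouplike translates of the identity component, whence (via Nichols--Zoeller freeness) equidimensional; dualizing back yields equidimensional blocks of $A$, and the same Steinberg contradiction applies. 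The paper's version extracts a formally stronger invariant (isomorphic Ext-quivers rather than merely equal block dimensions), but your dimension count is already enough and rests on Hopf-algebra structure theory that may feel more familiar than the covering-quiver formalism. The point you flagged as delicate---that link-indecomposable coalgebra components of $A^{*}$ correspond under duality to algebra blocks of $A$---is indeed the only thing to check carefully, and it holds in the finite-dimensional setting because a direct-sum coalgebra decomposition dualizes to a direct-product algebra decomposition and indecomposability is preserved both ways.
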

\begin{proof} Let $H$ be a finite-dimensional Hopf algebra such that the underlying algebra is basic. By \cite[Thm. 2.3]{GS}, the
Ext-quiver of $H$ must be a \emph{covering quiver} (see \cite{GS} for the definition), or equivalently a \emph{Hopf quiver} (see \cite{CR}).
By the definition of the covering quiver, we know that the Ext-quivers of all blocks of $H$ are isomorphic as direct graphs.

 Now let
$H=\uqslz$ or $H=\mathfrak{U}_{q}(\mathfrak{sl}_{2})$. Then it is well-known that $H$ contains a Steinberg module. 
Therefore, the block $\B_{s}$ containing this Steinberg module is Morita equivalent to $k$. Thus its Ext-quiver is just a point. 
So if there is a Hopf structure on $\B(H)$, then all blocks are simple algebras by the foregoing argument. It follows that $H$ is semisimple, which is absurd. 
\end{proof}

\begin{remark} \emph{(a) In the classical $\uqslz$ case or the restricted quantum universal enveloping algebra $\Uqslz$ case, the order of the group-like element $K$ (see \cite{Lus,Su} for the definitions of $\uqslz$  and $\Uqslz$) is high enough to distinguish between different vectors in an indecomposable projective module and differentiate one projective module from another. However, in the $\Quslz$ case, we lose this convenient tool partly because the orders of the group-like elements $\k$ and $\hat{\k}$ are not high enough.
Fortunately, we can still use them to differentiate two non-isomorphic projective modules.}

\emph{(b) If $2|n$ while $4\nmid n$, one can use $e'_{2i,0},e'_{2i,1}$ defined in Remark
\ref{r2.3} and the same procedure we developed to define projective modules
 and simple modules. Moreover all conclusions in Corollary \ref{Cor2.14}  still hold. We leave the proof for the interested reader. }

 \emph{(c) There are two apparent differences between the representations of
 $\Quslz$ and those of the classical $\uqslz$ and $\Uqslz$. Namely,
 (I): $\Quslz$ has no Steinberg modules; (II): all simple modules of $\Quslz$ are of odd dimensions. On the other hand, it is well-known that both $\uqslz$ and $ \Uqslz$  have Steinberg modules and the dimensions of the simple modules may be even.}
\end{remark}

\section{Indecomposable modules}

Let $\Lambda$ be the basic algebra given in Corollary \ref{Cor2.14} (f). The Auslander-Reiten quiver $\Gamma_{\Lambda}$ of $\Lambda$ is known. Doubling the following picture  we obtain $\Gamma_{\Lambda}$:
\begin{figure}[hbt]
\begin{picture}(220,80)(0,-10)
\put(-10,40){\makebox(0,0){$\ddots$}}

\put(0,30){\makebox(0,0){$\cdot$}}
 \put(2,35){\vector(1,1){15}}
\put(4,33){\vector(1,1){15}}
\put(22,52){\makebox(0,0){$\cdot$}}\put(24,54){\vector(1,0){15}}
\put(45,54){\makebox(0,0){$\cdot$}}\put(50,54){\vector(1,0){15}}
\put(45,60){\makebox(0,0){$P$}}
 \put(24,48){\vector(1,-1){15}}
\put(28,52){\vector(1,-1){15}} \put(45,30){\makebox(0,0){$\cdot$}}

\put(45,30){\makebox(0,0){$\cdot$}}
 \put(47,35){\vector(1,1){15}}
\put(49,33){\vector(1,1){15}} \put(67,52){\makebox(0,0){$\cdot$}}
 \put(69,48){\vector(1,-1){15}}
\put(73,52){\vector(1,-1){15}} \put(90,30){\makebox(0,0){$\cdot$}}

\put(90,30){\makebox(0,0){$\cdot$}}
 \put(92,35){\vector(1,1){15}}
\put(94,33){\vector(1,1){15}} \put(112,52){\makebox(0,0){$\cdot$}}
\put(122,47){\makebox(0,0){$\ddots$}}

\put(170,70){\makebox(0,0){$\vdots$}}

\put(170,60){\makebox(0,0){$\cdot$}} \put(168,55){\vector(0,-1){15}}
\put(170,35){\makebox(0,0){$\cdot$}} \put(172,40){\vector(0,1){15}}

\put(170,35){\makebox(0,0){$\cdot$}} \put(168,30){\vector(0,-1){15}}
\put(170,10){\makebox(0,0){$\cdot$}} \put(172,15){\vector(0,1){15}}

\put(200,70){\makebox(0,0){$\vdots$}}

\put(200,60){\makebox(0,0){$\cdot$}} \put(198,55){\vector(0,-1){15}}
\put(200,35){\makebox(0,0){$\cdot$}} \put(202,40){\vector(0,1){15}}

\put(200,35){\makebox(0,0){$\cdot$}} \put(198,30){\vector(0,-1){15}}
\put(200,10){\makebox(0,0){$\cdot$}} \put(202,15){\vector(0,1){15}}

\put(220,35){\makebox(0,0){$\cdots$}}
\put(150,35){\makebox(0,0){$\cdots$}}

\put(170,-10){\makebox(0,0){$\textrm{\Small A}\;
\mathbbm{P}^{1}k\; \textrm{\Small family of homogeneous
tubes}$}}
\end{picture}
\end{figure}

To give all indecomposables, we only need to construct indecomposable $\Quslz$-modules corresponding  to dots in $\Gamma_{\Lambda}$ by Corollary \ref{Cor2.14}(f). The construction is parallel to \cite[Sec. 4]{Xiao} and \cite[Sec. 5]{Su}. So we omit the proof here and state the results directly.

\subsection{The indecomposable modules $V^{2i,j}_{l}$.}
For any non-negative integer $l$ and $1\leq i \leq \frac{n^{2}}{2},j=0,1$, the indecomposable module $V^{2i,j}_{l}$ has a basis:
$$\{a_{u}(m-1),e_{v}(m)|0\leq m\leq l,\;0\leq u\leq n^{2}-2i,\;1\leq v\leq 2i-1\}$$
with the action given by
\begin{eqnarray*}
&&\k^{-1}\hat{\k}e_{v}(m)=q^{2i-2v}e_{v}(m),\;\;\k\hat{\k}^{\frac{n}{2}}e_{v}(m)=(-1)^{v+j}e_{v}(m)\\
&&E e_{v}(m)=e_{v+1}(m),\\
&&Fe_{v}(m)=(v-1)_{q^{-1}}(1-q^{2i-v})e_{v-1}(m)+\delta_{v,1}a_{n^{2}-2i}(m-1)\end{eqnarray*}
and
\begin{eqnarray*}
&&\k^{-1}\hat{\k}a_{u}(m-1)=q^{-2i-2u}a_{u}(m-1),\;\;\k\hat{\k}^{\frac{n}{2}}a_{u}(m-1)=(-1)^{u+j}a_{u}(m-1)\\
&&E a_{u}(m-1)=a_{u+1}(m-1),\\
&&F a_{u}(m-1)=(u+2i-1)_{q^{-1}}(1-q^{-u})a_{u-1}(m-1),
\end{eqnarray*}
where $a_{n^{2}-2i+1}(m-1)=a_{-1}(m-1)=a_{u}(-1)=e_{0}(m)=0$ and $e_{2i}(m)=a_{0}(m)$. It may be useful to depict this module by means
of diagram:

\begin{figure}[hbt]
\begin{picture}(200,50)(0,0)
\put(-30,40){\makebox(0,0){$\circ$}}
 \put(-25,35){\line(1,-1){20}}
\put(0,10){\makebox(0,0){$\bullet$}}
\put(5,15){\line(1,1){20}}
\put(30,40){\makebox(0,0){$\circ$}}
\put(35,35){\line(1,-1){20}}
\put(60,10){\makebox(0,0){$\bullet$}}

\put(90,20){\makebox(0,0){$\cdots$}}
\put(120,40){\makebox(0,0){$\circ$}}
 \put(125,35){\line(1,-1){20}}
\put(150,10){\makebox(0,0){$\bullet$}}
\put(155,15){\line(1,1){20}}
\put(180,40){\makebox(0,0){$\circ$}}

\end{picture}
\end{figure}

There are $l$ copies of $\bullet$ and $l+1$ copies of $\circ$.  In comparing with the diagrams displayed earlier, $\bullet$ and $\circ$ stand for $\S_{2i,j}$ and $\S_{n^{2}-2i+2,j'}$ respectively where
$j'+j=1$. The lines $/$ and $\backslash$ may be understood as the actions of $E$ and $F$ respectively.

They form all syzygies of simple modules. Indeed, we have:
$$\Omega^{l}(\S_{2i,j})=\left \{
\begin{array}{ll} V^{2i,j}_{l}, & \;\;\;\;l \textrm{ is odd}\\
V^{n^2-2i+2,j'}_{l}, &
\;\;\;\;l \textrm{ is even,}
\end{array}\right. $$
where $j'$ is determined by requiring  $j'+j=1$. Here $\Omega^{l}(\S_{2i,j})$ denotes the $l$-th syzygy of $\S_{2i,j}$.

\subsection{The indecomposable modules $\widetilde{V}^{2i,j}_{n}$.}
For any non-negative integer $l$ and $1\leq i \leq \frac{n^{2}}{2},j=0,1$, the indecomposable modules $\widetilde{V}^{2i,j}_{n}$ has a basis:
$$\{a_{u}(m+1),e_{v}(m)|0\leq m\leq l,\;0\leq u\leq n^{2}-2i,\;1\leq v\leq 2i-1\}$$
with the action given by
\begin{eqnarray*}
&&\k^{-1}\hat{\k}e_{v}(m)=q^{2i-2v}e_{v}(m),\;\;\k\hat{\k}^{\frac{n}{2}}e_{v}(m)=(-1)^{v+j}e_{v}(m),\\
&&E e_{v}(m)=e_{v+1}(m),\\
&&Fe_{v}(m)=(v-1)_{q^{-1}}(1-q^{2i-v})e_{v-1}(m),\end{eqnarray*}
and
\begin{eqnarray*}
&&\k^{-1}\hat{\k}a_{u}(m+1)=q^{-2i-2u}a_{u}(m+1),\;\;\k\hat{\k}^{\frac{n}{2}}a_{u}(m+1)=(-1)^{u+j}a_{u}(m+1),\\
&&E a_{u}(m+1)=a_{u+1}(m+1),\\
&&F a_{u}(m+1)=(u+2i-1)_{q^{-1}}(1-q^{-u})a_{u-1}(m+1)+\delta_{u,0}e_{2i-1}(m),
\end{eqnarray*}
where $a_{-1}(m)=a_{u}(0)=e_{-1}(m)=e_{2i}(m)=0$ and $a_{n^2-2i+2}(m)=e_{1}(m)$. This module can be described schematically as follows:
\begin{figure}[hbt]
\begin{picture}(200,50)(0,0)
\put(0,10){\makebox(0,0){$\circ$}}
\put(5,15){\line(1,1){20}}
\put(30,40){\makebox(0,0){$\bullet$}}
\put(35,35){\line(1,-1){20}}
\put(60,10){\makebox(0,0){$\circ$}}
\put(65,15){\line(1,1){20}}
\put(90,40){\makebox(0,0){$\bullet$}}

\put(120,20){\makebox(0,0){$\cdots$}}
\put(150,10){\makebox(0,0){$\circ$}}
\put(155,15){\line(1,1){20}}

\put(180,40){\makebox(0,0){$\bullet$}}
\put(185,35){\line(1,-1){20}}
\put(210,10){\makebox(0,0){$\circ$}}

\end{picture}
\end{figure}

There are $l$ copies of $\bullet$ and $l+1$ copies of $\circ$. The relation with cosyzygies is:
$$\Omega^{-l}(\S_{2i,j})=\left \{
\begin{array}{ll} \widetilde{V}^{2i,j}_{l} & \;\;\;\;l \textrm{ is odd}\\
\widetilde{V}^{n^2-2i+2,j'}_{l} &
\;\;\;\;l \textrm{ is even,}
\end{array}\right. $$
where $j'$ is determined by the requirement  $j'+j=1$. Here $\Omega^{-l}(\S_{2i,j})$ denotes the $l$-th cosyzygy of $\S_{2i,j}$.

\subsection{The indecomposable modules $W^{2i,j}_{l}$.}
For any positive integer $l$ and $1\leq i \leq \frac{n^{2}}{2},j=0,1$, one has a basis of $W^{2i,j}_{l}$ as follows:
$$\{e_{u}(m)|1\leq m\leq l,\;1\leq u\leq n^{2}\}$$
with the action given by
\begin{eqnarray*}
&&\k^{-1}\hat{\k}e_{u}(m)=q^{2i-2u}e_{u}(m),\;\;\k\hat{\k}^{\frac{n}{2}}e_{u}(m)=(-1)^{u+j}e_{u}(m),\\
&&E e_{u}(m)=e_{u+1}(m),\\
&&Fe_{u}(m)=(u-1)_{q^{-1}}(1-q^{2i-u})e_{u-1}(m)+\delta_{u,1}e_{n^{2}}(m-1),\end{eqnarray*}
where $e_{n^2+1}(m)=e_{0}(m)=e_{u}(0)=0$. The diagram of this module is:

\begin{figure}[hbt]
\begin{picture}(200,50)(0,0)
\put(0,10){\makebox(0,0){$\bullet$}}
\put(5,15){\line(1,1){20}}
\put(30,40){\makebox(0,0){$\circ$}}
\put(35,35){\line(1,-1){20}}
\put(60,10){\makebox(0,0){$\bullet$}}
\put(65,15){\line(1,1){20}}
\put(90,40){\makebox(0,0){$\circ$}}

\put(120,20){\makebox(0,0){$\cdots$}}
\put(150,10){\makebox(0,0){$\bullet$}}
\put(155,15){\line(1,1){20}}

\put(180,40){\makebox(0,0){$\circ$}}
\end{picture}
\end{figure}

There are $l$ copies of $\bullet$ and $l$ copies of $\circ$. The modules constructed in this subsection correspond to those parametrized by $\lambda=[1,0]\in \mathbbm{P}^{1}k$.

\subsection{The indecomposable modules $\widetilde{W}^{2i,j}_{l}$.}
For any positive integer $l$ and $1\leq i \leq \frac{n^{2}}{2},j=0,1$, one has a basis of $\widetilde{W}^{2i,j}_{l}$:
$$\{f_{u}(m)|1\leq m\leq l-1,\;1\leq u\leq n^{2}\}\cup \{f_{u}(n)|1\leq u\leq 2i-1\}\cup \{f_{u}(0)|2i\leq u\leq n^{2}\}$$
with the action given by
\begin{eqnarray*}
&&\k^{-1}\hat{\k}f_{u}(m)=q^{2i-2u}f_{u}(m),\;\;\k\hat{\k}^{\frac{n}{2}}f_{u}(m)=(-1)^{u+j}f_{u}(m),\\
&&E f_{u}(m)=f_{u+1}(m),\\
&&Ff_{u}(m)=(u-1)_{q^{-1}}(1-q^{2i-u})f_{u-1}(m)+\delta_{u,1}f_{n^{2}}(m-1),\end{eqnarray*}
where $f_{n^2+1}(m)=f_{0}(m)=f_{2i}(n)=0$. The diagram of this module is give by

\begin{figure}[hbt]
\begin{picture}(200,50)(0,0)
\put(-30,40){\makebox(0,0){$\circ$}}
 \put(-25,35){\line(1,-1){20}}
\put(0,10){\makebox(0,0){$\bullet$}}
\put(5,15){\line(1,1){20}}
\put(30,40){\makebox(0,0){$\circ$}}
\put(35,35){\line(1,-1){20}}
\put(60,10){\makebox(0,0){$\bullet$}}

\put(90,20){\makebox(0,0){$\cdots$}}
\put(120,40){\makebox(0,0){$\circ$}}
 \put(125,35){\line(1,-1){20}}
\put(150,10){\makebox(0,0){$\bullet$}}
\end{picture}
\end{figure}

There are $l$ copies of $\bullet$ and $l$ copies of $\circ$.The modules constructed in this subsection correspond to those parametrized by $\lambda=[0,1]\in \mathbbm{P}^{1}k$. 

\subsection{The indecomposable modules $T^{2i,j}_{l}(\lambda)$.}
For any positive integer $l$, $1\leq i \leq \frac{n^{2}}{2},j=0,1$ and $\lambda\in k^{\ast}$, the indecomposable modules $T^{2i,j}_{l}(\lambda)$ has a basis:
$$\{a_{u}(m),e_{v}(m)|1\leq m\leq l,\;0\leq u\leq n^{2}-2i,\;1\leq v\leq 2i-1\}$$
with the action given by
\begin{eqnarray*}
&&\k^{-1}\hat{\k}e_{v}(m)=q^{2i-2v}e_{v}(m),\;\;\k\hat{\k}^{\frac{n}{2}}e_{v}(m)=(-1)^{v+j}e_{v}(m),\\
&&E e_{v}(m)=\left \{
\begin{array}{ll} e_{v+1}(m), & \;\;\;\;v\neq 2i-1\\a_{0}(m), &
\;\;\;\;v=2i-1,
\end{array}\right. \\
&&Fe_{v}(m)=\left \{
\begin{array}{ll} (v-1)_{q^{-1}}(1-q^{2i-v})e_{v-1}(m), & \;\;\;\;v\neq 1\\
\lambda a_{n^{2}-2i}(m)+ a_{n^{2}-2i}(m-1),&
\;\;\;\;v=1,
\end{array}\right. \end{eqnarray*}
and
\begin{eqnarray*}
&&\k^{-1}\hat{\k}a_{u}(m)=q^{-2i-2u}a_{u}(m),\;\;\k\hat{\k}^{\frac{n}{2}}a_{u}(m)=(-1)^{u+j}a_{u}(m),\\
&&E a_{u}(m)=\left \{
\begin{array}{ll} a_{u+1}(m), & \;\;\;\;u\neq n^2-2i\\ 0, &
\;\;\;\;u=n^2-2i,
\end{array}\right. \\
&&Fa_{u}(m)=\left \{
\begin{array}{ll} (u+2i-1)_{q^{-1}}(1-q^{-u})a_{u-1}(m), & \;\;\;\;u\neq 0\\
0,& \;\;\;\;u=0,
\end{array}\right.
\end{eqnarray*}
where $a_{u}(-1)=0$ for all $0\leq u\leq n^{2}-2i$. The indecomposable modules
$T^{2i,j}_{l}(\lambda)$ correspond to those parametrized by $\lambda'=[1,\lambda]\in \mathbbm{P}^{1}k$.

\subsection{Auslander-Reiten sequences.}  We have the following Auslander-Reiten sequences:
\begin{gather*} 0\To V_{l+2}^{2i,j}\To V_{l+1}^{2i,j}\oplus V_{l+1}^{2i,j}\To V_{l}^{2i,j}\To 0,\\
0\To \widetilde{V}_{l}^{2i,j}\To \widetilde{V}_{l+1}^{2i,j}\oplus \widetilde{V}_{l+1}^{2i,j}\To \widetilde{V}_{l+2}^{2i,j}\To 0,\\
0\To V_{1}^{2i,j}\To \P_{2i,j}\oplus \S_{n^2-2i+2,j'}\oplus \S_{n^2-2i+2,j'}\To \widetilde{V}_{1}^{2i,j}\To 0,\\
 0\To W_{l}^{2i,j}\To W_{l+1}^{2i,j}\oplus W_{l-1}^{2i,j}\To W_{l}^{2i,j}\To 0,\\
 0\To \widetilde{W}_{l}^{2i,j}\To \widetilde{W}_{l+1}^{2i,j}\oplus \widetilde{W}_{l-1}^{2i,j}\To \widetilde{W}_{l}^{2i,j}\To 0,\\
 0\To W_{l}^{2i,j}(\lambda)\To W_{l+1}^{2i,j}(\lambda)\oplus W_{l-1}^{2i,j}(\lambda)\To W_{l}^{2i,j}(\lambda)\To 0.
\end{gather*}

Comparing with the Auslander-Reiten quiver $\Gamma_{\Lambda}$, we obtain the following:

\begin{theorem}\label{thm3.1} The modules

$\bullet\;\;\P_{2i,j},\;\;\;1\leq i\leq \frac{n^{2}}{2},\;j=0,1;$

$\bullet\;\;V^{2i,j}_l, \;\widetilde{V}^{2i,j}_l\;\;\;1\leq i\leq \frac{n^{2}}{2},\;j=0,1,\;l\geq 0;$

$\bullet\;\;W^{2i,j}_l, \;\widetilde{W}^{2i,j}_l\;\;\;1\leq i\leq \frac{n^{2}}{2},\;j=0,1,\;l\geq 1;$

$\bullet\;\;T^{2i,j}_l(\lambda), \;\;\;1\leq i\leq \frac{n^{2}}{2},\;j=0,1,\;l\geq 0,\;\lambda\in k^{\ast}$

form a complete list of finite-dimensional indecomposable $\Quslz$-modules.
\end{theorem}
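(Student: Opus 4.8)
The plan is to reduce the classification to the basic algebra $\Lambda$ via Morita equivalence and then read off all indecomposables from the known Auslander--Reiten quiver $\Gamma_{\Lambda}$. First I would observe that the block decomposition of $\Quslz$ from Corollary \ref{Cor2.14}(e)--(f) gives a direct product decomposition of the algebra, so that every indecomposable $\Quslz$-module is supported on a single block $\B_{2i,j}$. Since each such block is Morita equivalent to $\Lambda$ by Corollary \ref{Cor2.14}(f), the Morita functor induces a bijection between the isomorphism classes of indecomposable modules over $\B_{2i,j}$ and those over $\Lambda$. Thus it suffices to (i) classify all indecomposable $\Lambda$-modules, and (ii) transport them back, keeping track of the block parameters $(i,j)$.

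For step (i) I would invoke the representation theory of $\Lambda$, which is a tame algebra whose module category and Auslander--Reiten quiver have been determined in \cite{Erd,Ha,Su,Xiao}. The quiver $\Gamma_{\Lambda}$, namely the double of the picture displayed above, consists of two $\mathbb{Z}A_{\infty}$-type components containing the indecomposable projective-injectives, together with a $\mathbbm{P}^{1}k$-family of homogeneous tubes. Because $\Lambda$ is tame, these components and tubes exhaust the indecomposables: the discrete vertices give the preprojective/preinjective-type modules and the tubes give the one-parameter families. So a complete list of indecomposable $\Lambda$-modules is precisely a list of modules realizing every vertex of $\Gamma_{\Lambda}$ together with every point of the tubular family.

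The substance of the proof is then to check that the explicitly constructed families of Subsections 3.1--3.5 realize exactly these positions. I would verify that each of $V^{2i,j}_{l}$, $\widetilde{V}^{2i,j}_{l}$, $W^{2i,j}_{l}$, $\widetilde{W}^{2i,j}_{l}$, and $T^{2i,j}_{l}(\lambda)$ is indecomposable and that, block by block, they are pairwise non-isomorphic, using the $\k^{-1}\hat{\k}$- and $\k\hat{\k}^{\frac{n}{2}}$-eigenvalue data, dimensions, and socle/top structure exactly as in Lemma \ref{l2.6}. Then I would match them to $\Gamma_{\Lambda}$: the projectives $\P_{2i,j}$ together with the syzygy families $V^{2i,j}_{l}$ and the cosyzygy families $\widetilde{V}^{2i,j}_{l}$ fill the two non-periodic components, while $W^{2i,j}_{l}$, $\widetilde{W}^{2i,j}_{l}$, and $T^{2i,j}_{l}(\lambda)$ occupy the homogeneous tubes indexed by $[1,0]$, $[0,1]$, and $[1,\lambda]$ in $\mathbbm{P}^{1}k$ respectively. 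The Auslander--Reiten sequences listed in Subsection 3.6 pin down each module's position and its irreducible maps, confirming that no vertex or tube is repeated or omitted.

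The main obstacle I anticipate is precisely this last matching, i.e. proving that the hands-on modules coincide with the abstract AR-quiver data rather than merely resembling it. Concretely, the delicate points are verifying the indecomposability of the tube modules $T^{2i,j}_{l}(\lambda)$ for all $\lambda\in k^{\ast}$ and $l\geq 1$, so that the parameter $\lambda$ genuinely separates non-isomorphic modules and the family is the full $\mathbbm{P}^{1}k$ and not a proper subset, and checking that the short exact sequences in Subsection 3.6 really are the Auslander--Reiten sequences of $\Lambda$ transported through the Morita equivalence. Since this verification runs in exact parallel with \cite[Sec. 4]{Xiao} and \cite[Sec. 5]{Su}, I would carry it out by pushing their computations through the equivalence of Corollary \ref{Cor2.14}(f), and the conclusion of the theorem then follows by comparing the resulting list against $\Gamma_{\Lambda}$.
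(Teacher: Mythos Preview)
Your proposal is correct and follows essentially the same approach as the paper: reduce to the basic algebra $\Lambda$ via the Morita equivalence of Corollary \ref{Cor2.14}(f), use the known Auslander--Reiten quiver $\Gamma_{\Lambda}$, and verify that the explicit families in Subsections 3.1--3.5 together with the Auslander--Reiten sequences of Subsection 3.6 realize every vertex and tube, in parallel with \cite[Sec.~4]{Xiao} and \cite[Sec.~5]{Su}. In fact the paper omits the details entirely and simply states that the conclusion follows by ``comparing with the Auslander--Reiten quiver $\Gamma_{\Lambda}$,'' so your write-up is if anything more explicit than the original.
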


\section{Tensor products}

The aim of this section is to give the tensor product decomposition formula for simple modules and projective modules. Looking
at the definition of $\Quslz$, one may find that its comultiplication  is more  complicated than the one of $\uqslz$. However, we still have that this comultiplication preserves some kinds of ``grading",  described in detail as follows:

 Let $\zeta_{2n^{2}}$ be a $2n^{2}$-th primitive root of unity. We
 define an algebraic automorphism $\sigma$ of $\Quslz$ as follows:
 $$\sigma(\k):=\k,\;\;\sigma(\hat{\k}):=\hat{\k},\;\;\sigma(E)=\zeta_{2n^{2}}E,\;\;\sigma(F)=\zeta^{-1}_{2n^{2}}F.$$
 The promised grading is the decomposition of $\Quslz$ into $\sigma$-eigenspaces:
 $$\Quslz=\bigoplus_{s=-(n^{2}-1)}^{n^{2}-1}\Quslz_{s}$$
 where
 $$\Quslz_{s}:=\bigoplus_{b-a=s,0\leq a,b\leq n^{2}-1}F^{a}\mathbf{u}^{0}E^{b}.$$
 
 We say that the elements in $\Quslz_{s}$ have \emph{height} $s$. As usual, the grading on
 $\Quslz\otimes \Quslz$ is given by
 $$\Quslz\otimes \Quslz=
 \bigoplus_{s}(\Quslz\otimes \Quslz)_s:=
 \bigoplus_{u+v=s}\Quslz_{u}\otimes \Quslz_{v}.$$
 The following conclusion is clear.
 \begin{lemma}\label{l4.1} For $1-n^{2}\leq s\leq n^2-1$, we have:
 $$\Delta(\Quslz_{s})\subseteq
 (\Quslz\otimes \Quslz)_s.$$
 \end{lemma}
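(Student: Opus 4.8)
The plan is to verify the inclusion $\Delta(\Quslz_{s})\subseteq (\Quslz\otimes\Quslz)_s$ by checking it on algebra generators and then propagating it multiplicatively. Since $\Quslz$ is generated as an algebra by $\k,\hat{\k},E,F$, and since both $\Delta$ and the height grading behave well under multiplication, it suffices to understand the height of each generator and to confirm that $\Delta$ sends an element of a given height into the total-height-$s$ part of the tensor square. The key structural observation is that the height grading is exactly the grading by $\sigma$-eigenspaces, where $\sigma$ acts by $\sigma(E)=\zeta_{2n^2}E$, $\sigma(F)=\zeta_{2n^2}^{-1}F$ and fixes $\k,\hat{\k}$; thus $E$ has height $1$, $F$ has height $-1$, and $\k,\hat{\k}$ (hence all of $\mathbf{u}^0$) have height $0$. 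This matches the definition $\Quslz_s=\bigoplus_{b-a=s}F^a\mathbf{u}^0E^b$.

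\emph{First} I would record that the height grading is multiplicative, i.e. $\Quslz_s\cdot\Quslz_t\subseteq\Quslz_{s+t}$; this is immediate because $\sigma$ is an algebra automorphism, so its eigenspaces multiply additively in the eigenvalue exponent. Consequently the tensor-square grading $(\Quslz\otimes\Quslz)_s=\bigoplus_{u+v=s}\Quslz_u\otimes\Quslz_v$ is also multiplicative under the componentwise product on $\Quslz\otimes\Quslz$. \emph{Next}, since $\Delta$ is an algebra homomorphism, once I check that $\Delta(g)$ lands in the correct graded piece for each generator $g$, the general claim follows: an arbitrary element of $\Quslz_s$ is a sum of products of generators whose heights add up to $s$, and applying $\Delta$ turns such a product into a product of the corresponding $\Delta(g)$, which lies in the product of the appropriate graded pieces of the tensor square, hence in $(\Quslz\otimes\Quslz)_s$ by multiplicativity.

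\emph{Then} the computation reduces to inspecting the four comultiplication formulas. For $\k$ and $\hat{\k}$ we have $\Delta(\k)=\k\otimes\k$ and $\Delta(\hat{\k})=\hat{\k}\otimes\hat{\k}$, both of total height $0+0=0$, as required for height-$0$ generators. For $E$, the defining formula $\Delta(E)=E\otimes\flat^{-1}+\k^{-1}\otimes 1_0 E+1\otimes\sum_{i=1}^{n-1}1_i E$ must be checked term by term: here $\flat=\sum_i q^{-i}1_i$ lies in $\mathbf{u}^0$ and so has height $0$, and each $1_i\in\mathbf{u}^0$ also has height $0$. Thus the three summands have total heights $1+0$, $0+1$, and $0+1$ respectively, all equal to $1=\operatorname{height}(E)$. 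The formula for $\Delta(F)$ is handled identically, with every summand of total height $-1$. \emph{Finally} I would assemble these generator checks with the multiplicativity step to conclude the lemma for every homogeneous element, and hence for all of $\Quslz_s$.

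The only point requiring genuine care—rather than obstruction—is confirming that $\flat^{-1}$ and the idempotents $1_i$ genuinely sit in $\mathbf{u}^0$ and thus carry height $0$; this is clear since $\flat$ is a polynomial in $\k$ and hence invertible within $\mathbf{u}^0$, so $\flat^{-1}\in\mathbf{u}^0$ as well. Everything else is a matter of reading off the three (respectively three) summands in the formulas for $\Delta(E)$ and $\Delta(F)$ and adding heights, so there is no real difficulty; the lemma is, as the paper says, clear, and the structured argument above is merely making the bookkeeping explicit. I expect no serious obstacle, the main thing being to phrase the reduction to generators cleanly so that the multiplicativity of both gradings does the work.
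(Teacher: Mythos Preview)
Your proposal is correct and is exactly the routine verification that the paper suppresses with the phrase ``The following conclusion is clear.'' You have simply made explicit the two ingredients behind that clarity: the $\sigma$-eigenspace grading is multiplicative because $\sigma$ is an algebra automorphism, and $\Delta$ is an algebra map, so checking the claim on the generators $\k,\hat{\k},E,F$ (where it is immediate from the comultiplication formulas, every auxiliary factor $\flat^{\pm1},1_i,\k^{-1},\hat{\k}$ lying in $\mathbf{u}^0$ and hence in height $0$) suffices.
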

 
 Recall the definition of $\P_{2i,j}$, the set  $\{E^{l}\alpha_{2i,j},E^{l}\gamma_{2i,j}|0\leq l\leq n^{2}-1\}$ forms a basis of $\P_{2i,j}$.  We already know that $E^{l}\alpha_{2i,j},E^{l}\gamma_{2i,j}$ are $\k^{-1}\hat{\k},\k\hat{\k}^{\frac{n}{2}}$-eigenvectors. Moreover, it is
 not hard to see that $E^{l}\alpha_{2i,j}\in \Quslz_{l+1-n^{2}}$ and $E^{l}\gamma_{2i,j}\in
 \Quslz_{l+2i-n^{2}}$. The same argument can be applied to $\S_{2i,j}$. Therefore,  by regarding $\P_{2i,j}$ and $\S_{2i,j}$ as subspaces of $\Quslz$, one sees that they consist of homogeneous elements and thus they are graded according to the heights.  It follows that, for $1\leq i_{1},i_{2}\leq \frac{n^{2}}{2}$  and $0\leq j_{1},j_{2}\leq 1$, we have:
 $$\P_{2i_{1},j_{1}}\otimes \S_{2i_{2},j_{2}}=\bigoplus_{s}(\P_{2i_{1},j_{1}}\otimes \S_{2i_{2},j_{2}})_{s}.$$
 By Lemma \ref{l4.1}, $E$ (resp. $F$) maps $(\P_{2i_{1},j_{1}}\otimes \S_{2i_{2},j_{2}})_{s}$ to $(\P_{2i_{1},j_{1}}\otimes \S_{2i_{2},j_{2}})_{s+1}$
 (resp. $(\P_{2i_{1},j_{1}}\otimes \S_{2i_{2},j_{2}})_{s-1}$) by left multiplication.

 Let $H$ be a finite-dimensional quasi-Hopf algebra and $M_{1}, M_{2}$ two $H$-modules. It is well-known
 that $M_{1}\otimes M_{2}$ is projective if either $M_{1}$ or $M_{2}$ is projective. This implies that $\P_{2i_{1},j_{1}}\otimes \S_{2i_{2},j_{2}}$
 is a direct sum of indecomposable projective modules. So we have the following \emph{method} to compute $\P_{2i_{1},j_{1}}\otimes \S_{2i_{2},j_{2}}$:
 A $\k^{-1}\hat{\k},\k\hat{\k}^{\frac{n}{2}}$-eigenvector $v$ of lowest height determines an indecomposable projective module $\P_{v}$, which is a summand
 of $\P_{2i_{1},j_{1}}\otimes \S_{2i_{2},j_{2}}$.  We then delete the corresponding $\k^{-1}\hat{\k},\k\hat{\k}^{\frac{n}{2}}$-eigenvectors in $\P_{v}$.
Continue this process  until there is no any other $\k^{-1}\hat{\k},\k\hat{\k}^{\frac{n}{2}}$-eigenvector.

 In the rest of this section, $\overline{i}$ denotes the least non-negative residue of $i$ modulo $2$ and
 $2\P$ stands for $\P\oplus \P$. Using the method we just introduced, we obtain the following decomposition rules.

 \begin{theorem}\label{t4.2} For $1\leq i_{1},i_{2}\leq \frac{n^{2}}{2}$
 and $0\leq j_{1},j_{2}\leq 1$, we have:

 \emph{(a)} If $2i_{1}-1\geq n^{2}-2i_{2}+1$ and $i_{1}\leq i_{2}$, then
 $$\P_{2i_{1},j_{1}}\otimes \S_{2i_{2},j_{2}}\cong \bigoplus_{l=0}^{n^{2}-2i_{2}}\P_{n^{2}+2i_{1}-2i_{2}-2l,\overline{j_{1}+j_{2}+l}}.$$

 \emph{(b)} If $2i_{1}-1\geq n^{2}-2i_{2}+1$ and $i_{1}> i_{2}$, then
\begin{eqnarray*}&&\P_{2i_{1},j_{1}}\otimes \S_{2i_{2},j_{2}}\\
&&\cong \bigoplus_{l=0}^{i_{1}-i_{2}-1}2\P_{2i_{1}-2i_{2}-2l,\overline{j_{1}+j_{2}+l}}\oplus
 \bigoplus_{l=0}^{n^{2}-2i_{1}}\P_{n^{2}-2i_{1}+2i_{2}-2l,\overline{j_{1}+j_{2}+l}}.\end{eqnarray*}

 \emph{(c)} If $2i_{1}-1< n^{2}-2i_{2}+1$ and $i_{1}\leq i_{2}$, then
 \begin{eqnarray*}&&\P_{2i_{1},j_{1}}\otimes \S_{2i_{2},j_{2}}\\
 &&\cong \bigoplus_{l=0}^{2i_{1}-2}\P_{n^{2}+2i_{1}-2i_{2}-2l,\overline{j_{1}+j_{2}+l}}\oplus
 \bigoplus_{l=0}^{\frac{n^{2}}{2}-i_{1}-i_{2}}2\P_{n^{2}-2i_{1}-2i_{2}+2-2l,\overline{j_{1}+j_{2}+l-1}}.\end{eqnarray*}

  \emph{(d)} If $2i_{1}-1< n^{2}-2i_{2}+1$ and $i_{1}> i_{2}$, then
\begin{eqnarray*}&&\P_{2i_{1},j_{1}}\otimes \S_{2i_{2},j_{2}}\\
&&\cong \bigoplus_{l=0}^{i_{1}-i_{2}-1}2\P_{2i_{1}-2i_{2}-2l,\overline{j_{1}+j_{2}+l}}\oplus
\bigoplus_{l=0}^{2i_{2}-2}\P_{n^{2}-2i_{1}+2i_{2}-2l,\overline{j_{1}+j_{2}+l}}\\
&&\oplus \bigoplus_{l=0}^{\frac{n^{2}}{2}-i_{1}-i_{2}}2\P_{n^{2}-2i_{1}-2i_{2}+2-2l,\overline{j_{1}+j_{2}+l-1}}.\end{eqnarray*}
\end{theorem}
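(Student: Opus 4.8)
The plan rests on three facts. Since tensoring with a projective module yields a projective module, $M:=\P_{2i_1,j_1}\otimes\S_{2i_2,j_2}$ is a direct sum of the indecomposable projectives $\P_{2i,j}$, so the whole task is to compute multiplicities. I would organize this using the height grading of Lemma \ref{l4.1}, under which $E$ and $F$ shift height by $\pm1$. The crucial simplification is that, because $\k^{-1}\hat{\k}$ and $\k\hat{\k}^{\frac{n}{2}}$ are grouplike, their eigenvalues on a homogeneous vector of $M$ depend only on its height: from Corollary \ref{c2.1} one checks that a height-$h$ vector of $\P_{2i_1,j_1}$ (resp. $\S_{2i_2,j_2}$) has $\k^{-1}\hat{\k}$-eigenvalue $q^{2i_1-2h}$ (resp. $q^{2i_2-2h}$) and $\k\hat{\k}^{\frac{n}{2}}$-eigenvalue $(-1)^{h+j_1}$ (resp. $(-1)^{h+j_2}$), so a height-$h$ vector of $M$ has eigenvalues $q^{2(i_1+i_2)-2h}$ and $(-1)^{h+j_1+j_2}$. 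Matching these against the eigenvalues $q^{2i-2}$ and $(-1)^{1+j}$ of the bottom vector $\alpha_{2i,j}$ of $\P_{2i,j}$ shows that the type of a projective summand is read off from the height $h_0$ of its bottom (lowest-height) vector via $i\equiv i_1+i_2-h_0+1\pmod{n^2/2}$ and $j\equiv h_0+j_1+j_2+1\pmod 2$.

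Next I would record the graded characters. Writing $D_X(h)=\dim X_h$ for a height-graded module $X$, the module $\S_{2i_2,j_2}$ has $D_{\S}(h)=1$ on the height interval $[2i_2-n^2,0]$ and $0$ elsewhere, i.e.\ a ``box'' of width $n^2-2i_2+1=\dim\S_{2i_2,j_2}$; while $\P_{2i_1,j_1}$ has the symmetric ``trapezoidal'' profile taking value $1$ on its two ramps of width $2i_1-1$ and value $2$ on the central plateau of width $n^2-2i_1+1$, for a total of $2n^2$. Since $\Delta$ preserves the grading, $D_M$ is the convolution $D_{\P}\ast D_{\S}$, a sliding-window sum of the trapezoid over the box, which I would write out explicitly as a piecewise-linear function.

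The multiplicities are then extracted by the greedy peeling described before the theorem. In any decomposition $M=\bigoplus_k\P^{(k)}$ into indecomposable projectives, let $h_{\min}$ be the lowest height with $M_{h_{\min}}\neq 0$. Because the lowest-height space of each $\P_{2i,j}$ is one-dimensional and $h_{\min}$ is minimal, $\dim M_{h_{\min}}$ counts exactly the summands whose bottom vector sits at $h_{\min}$, and all of them are copies of the single type $\P_{2i,j}$ singled out by the formulas for $(i,j)$ above. Splitting these copies off and subtracting their (shifted trapezoidal) characters from $D_M$ leaves a smaller projective module on which I repeat the argument. This reduces the whole theorem to the deterministic bookkeeping of peeling trapezoids, one height-layer at a time, off $D_{\P}\ast D_{\S}$, with the label $(2i,j)$ at each step supplied by the two congruences.

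Finally, the four cases are precisely the four combinatorial regimes of this peeling. The comparison $2i_1-1\gtrless n^2-2i_2+1$ decides whether the box is narrower or wider than the ramps of the trapezoid, hence whether the convolution has single or doubled layers; the comparison $i_1\gtrless i_2$ decides whether the descending index $2i=n^2+2i_1-2i_2-2l$ stays in the admissible range $\{2,\dots,n^2\}$ or must be wrapped modulo $n^2/2$, which is exactly where the doubled terms $2\P$ and the secondary range $\frac{n^2}{2}-i_1-i_2$ in (c) and (d) arise. I expect the main obstacle to be purely this bookkeeping: checking that the peeled multiplicity stays non-negative and matches the claimed profile when both boundary effects occur at once, and—most error-prone—keeping the parity label $\overline{j_1+j_2+l}$ synchronized with the parity of the bottom height as $l$ advances. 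A dimension count of $\dim M=2n^2(n^2-2i_2+1)$ against the total over the listed summands serves as the consistency check in each case.
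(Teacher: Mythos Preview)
Your proposal is correct and follows essentially the same route as the paper: both arguments use the height grading, the fact that $\P\otimes\S$ is projective, and the greedy peeling procedure described just before the theorem (pick a lowest-height eigenvector, read off the projective summand it generates, strip it, repeat). The only cosmetic difference is that the paper packages the bookkeeping into a formal character $\eta$ recording height together with the $\k^{-1}\hat{\k}$- and $\k\hat{\k}^{n/2}$-eigenvalues, whereas you first observe that these eigenvalues are determined by the height and then work with the bare height-dimension function $D_X$; the resulting identities $\eta(\P_{2i_1,j_1})\eta(\S_{2i_2,j_2})=\sum y^{?}\eta(\P_{\ldots})$ in the paper are exactly your trapezoid-box convolution decompositions.
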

\begin{proof} We want to equip every module considered with a formal function that can reflect heights and
 $\k^{-1}\hat{\k},\k\hat{\k}^{\frac{n}{2}}$-eigenvalues. Let

 $y^{?}:\;\;\;\; \textmd{height of a vector;}$\\
 $(q^{-2})^{?}:\;\;\;\; \k^{-1}\hat{\k}-\textmd{eigenvalue of a vector;}$\\
 $(-1)^{?}:\;\;\;\;\k\hat{\k}^{\frac{n}{2}}-\textmd{eigenvalue of a vector}.$ 
 
 Then the formal function $\eta(\S_{2i,j})$ associated to $\S_{2i,j}$ is
 \begin{eqnarray*}
\eta(\S_{2i,j})&=&\sum_{l=0}^{n^{2}-2i}(q^{-2i-2l}y^{l},(-1)^{l}y^{l}).
\end{eqnarray*}
And
$$\eta(\P_{2i,j})=(1+y^{n^{2}})\eta(\S_{n^{2}-2i+2,j'})+2y^{2i-1}\eta(\S_{2i,j}).$$
In case  $2i_{1}-1\geq n^{2}-2i_{2}+1$ and $i_{1}\leq i_{2}$, it is immediate that we have:
$$\eta(\P_{2i_{1},j_{1}})\eta(\S_{2i_{2},j_{2}})=\sum_{l=0}^{n^{2}-2i_{2}}y^{l}\eta(\P_{n^{2}+2i_{1}-2i_{2}-2l,\overline{j_{1}+j_{2}+l}}).$$
This gives the decomposition rules in Part (a).  

For Part (b), we have the equation of the height functions:
\begin{eqnarray*}
\eta(\P_{2i_{1},j_{1}})\eta(\S_{2i_{2},j_{2}})&=&(1+y^{n^{2}})\sum_{l=0}^{i_{1}-i_{2}-1}y^{l}\eta(\P_{2i_{1}-2i_{2}-2l,\overline{j_{1}+j_{2}+l}})\\
&&+\sum_{l=0}^{n^{2}-2i_{1}}y^{2i_{1}-2i_{2}+l}\eta(\P_{n^{2}-2i_{1}+2i_{2}-2l,\overline{j_{1}+j_{2}+l}}).
\end{eqnarray*}

For Part (c), the following equation holds:
\begin{eqnarray*}
\eta(\P_{2i_{1},j_{1}})\eta(\S_{2i_{2},j_{2}})&=&\sum_{l=0}^{2i_{1}-2}y^{l}\eta(\P_{n^2+2i_{1}-2i_{2}-2l,\overline{j_{1}+j_{2}+l}})\\
&&+2\sum_{l=0}^{\frac{n^{2}}{2}-i_{1}-i_{2}}y^{2i_{1}-1+l}\eta(\P_{n^{2}-2i_{1}-2i_{2}+2-2l,\overline{j_{1}+j_{2}+l-1}}).
\end{eqnarray*}

For Part (d), we have:
\begin{eqnarray*}
\eta(\P_{2i_{1},j_{1}})\eta(\S_{2i_{2},j_{2}})&=&(1+y^{n^{2}})\sum_{l=0}^{i_{1}-i_{2}-1}y^{l}\eta(\P_{2i_{1}-2i_{2}-2l, \overline{j_{1}+j_{2}+l}})\\
&&+y^{2i_{1}-2i_{2}}\sum_{l=0}^{2i_{2}-2}y^{l}\eta(\P_{n^2-2i_{1}+2i_{2}-2l,\overline{j_{1}+j_{2}+l}})\\
&&+2y^{2i_{1}-1}\sum_{l=0}^{\frac{n^{2}}{2}-i_{1}-i_{2}}y^{l}\eta(\P_{n^{2}-2i_{1}-2i_{2}+2-2l,\overline{j_{1}+j_{2}+l-1}}).
\end{eqnarray*}
 \end{proof}

 \begin{remark} \emph{Let $H$ be a finite-dimensional quasi-Hopf algebra and assume $0\to M_{1}\to M_{2}\to M_{3}\to 0$ to
 be a short exact sequence of $H$-modules. Tensoring the sequence with a projective $H$-module $P$, we get a split exact sequence:
 $0\to P\otimes M_{1}\to P\otimes M_{2}\to P\otimes M_{3}\to 0$.  Therefore,
 Theorem \ref{t4.2} gives the decomposition formulas for the tensor product of a projective $\Quslz$-module  with any other finite-dimensional $\Quslz$-module. In fact, let $M$ be an arbitrary $\Quslz$-module
 and $\bigoplus_{l} \S_{l}$ the direct sum of its composition factors. Then we have:}
 $$\P\otimes M\cong \bigoplus_{l}\P\otimes \S_{l}.$$
 \end{remark}

 Denote by $K_{0}$ the Grothendieck ring of $\Quslz$. We want to characterize the ring structure of $K_0$.

 \begin{lemma}\label{lem4.4} Let $1\leq i_{1},i_{2}\leq \frac{n^{2}}{2}$
 be two positive integers and $0\leq j_{1},j_{2}\leq 1$. If $2i_{1}-1 \geq n^{2}-2i_{2}+1 $ and $i_{1}\leq i_{2}$, then
 $$\S_{2i_{1},j_{1}}\otimes \S_{2i_{2},j_{2}}\cong \bigoplus_{l=0}^{n^{2}-2i_{2}}\S_{n^{2}+2i_{1}-2i_{2}-2l,\overline{j_{1}+j_{2}+l}}.$$
 \end{lemma}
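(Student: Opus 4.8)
The plan is to deduce the decomposition of the simple-by-simple tensor product $\S_{2i_1,j_1}\otimes\S_{2i_2,j_2}$ from the already-established projective-by-simple rule in Theorem~\ref{t4.2}(a), by passing to the Grothendieck ring. Since we work in $K_0$, every module is replaced by the class of its semisimplification, so the goal is really to pin down the multiplicities of the simple composition factors of the tensor product. The key tool is the same height/character bookkeeping used in the proof of Theorem~\ref{t4.2}: I would attach to each module the formal function $\eta$ recording height (via $y^?$), the $\k^{-1}\hat\k$-eigenvalue (via $(q^{-2})^?$), and the $\k\hat\k^{\frac{n}{2}}$-eigenvalue (via $(-1)^?$). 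Because $\Delta$ respects the height grading (Lemma~\ref{l4.1}) and is an algebra map on the group-likes $\k,\hat\k$, these three data are multiplicative under $\otimes$, so $\eta(\S_{2i_1,j_1}\otimes\S_{2i_2,j_2})=\eta(\S_{2i_1,j_1})\,\eta(\S_{2i_2,j_2})$.

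First I would compute the product $\eta(\S_{2i_1,j_1})\eta(\S_{2i_2,j_2})$ explicitly using the formula
$$\eta(\S_{2i,j})=\sum_{l=0}^{n^2-2i}\bigl((q^{-2i-2l})^{?}y^{l},\,(-1)^{l}y^{l}\bigr)$$
recorded earlier in the section. Under the standing hypotheses $2i_1-1\geq n^2-2i_2+1$ and $i_1\leq i_2$, the product is a sum of terms indexed by pairs $(l_1,l_2)$ with $0\leq l_1\leq n^2-2i_1$ and $0\leq l_2\leq n^2-2i_2$. The plan is then to reorganize these terms, grouping by total height, and to recognize the resulting expression as $\sum_{l=0}^{n^2-2i_2}y^{l}\,\eta(\S_{n^2+2i_1-2i_2-2l,\overline{j_1+j_2+l}})$, which is precisely the character of the claimed right-hand side. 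The condition $i_1\leq i_2$ guarantees the lowest-weight index $n^2+2i_1-2i_2$ is a legitimate simple label (it lies in $(0,n^2]$), and $2i_1-1\geq n^2-2i_2+1$ controls the overlap so that no cancellation or projective piece intrudes.

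An equivalent and perhaps cleaner route is to invoke the tensor-identity lemma appearing in the Remark after Theorem~\ref{t4.2}: for any module $M$ with composition factors $\bigoplus_l\S_l$ one has $\P\otimes M\cong\bigoplus_l\P\otimes\S_l$. Applying Theorem~\ref{t4.2}(a) to $\P_{2i_1,j_1}\otimes\S_{2i_2,j_2}$ and comparing, in $K_0$, with the image of the conjectured simple-by-simple formula after multiplying both sides by $[\P_{2i_1,j_1}]$ (using that $[\P_{2i_1,j_1}]=2y^{2i_1-1}[\S_{2i_1,j_1}]+(1+y^{n^2})[\S_{n^2-2i_1+2,j_1'}]$ in the $\eta$-language), I would check that the two expressions agree; since multiplication by $[\P_{2i_1,j_1}]$ is injective on the relevant graded pieces in this weight range, the simple-by-simple formula follows. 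I would carry out whichever comparison produces fewer index clashes.

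The main obstacle I anticipate is purely combinatorial rather than conceptual: correctly tracking the three indices---the weight $n^2+2i_1-2i_2-2l$, the parity $\overline{j_1+j_2+l}$, and the height shift $y^l$---through the double sum and verifying that every cross term reassembles into a complete $\eta(\S_{\bullet,\bullet})$ with the right parity, with no leftover homogeneous vectors. In particular I must confirm that the inequalities $2i_1-1\geq n^2-2i_2+1$ and $i_1\leq i_2$ are exactly what is needed to keep all summand labels in the admissible range $1\leq\text{index}\leq\frac{n^2}{2}$ (after halving) and to prevent the appearance of the projective summands that show up in the other cases of Theorem~\ref{t4.2}; this is the delicate bookkeeping step, and it is where an off-by-one error would most easily creep in.
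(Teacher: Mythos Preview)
Your character computation is exactly the one the paper carries out (it is equation~\eqref{eq;4.1} there), and it does establish the equality
\[
[\S_{2i_1,j_1}\otimes\S_{2i_2,j_2}]=\sum_{l=0}^{n^2-2i_2}[\S_{n^2+2i_1-2i_2-2l,\overline{j_1+j_2+l}}]
\]
in $K_0$. But the lemma asserts a module isomorphism, not merely an equality of classes in the Grothendieck ring, and your proposal stops at the latter. Knowing the composition factors does not by itself tell you that $\S_{2i_1,j_1}\otimes\S_{2i_2,j_2}$ is semisimple: a priori some indecomposable summand $M_t$ could be a nontrivial extension (e.g.\ a Verma module, or a piece of $W^{2i,j}_l$) with the same composition factors. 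Your remark that the hypotheses ``prevent the appearance of the projective summands'' is pointed in the right direction, but no argument is given, and projectives are not the only possible non-simple indecomposables.

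The paper closes this gap with a short height argument that you have all the ingredients for but do not supply. Since $\S_{2i_1,j_1}=\Soc(\P_{2i_1,j_1})$, the tensor product embeds in $\P_{2i_1,j_1}\otimes\S_{2i_2,j_2}\cong\bigoplus_{l}\P_{n^2+2i_1-2i_2-2l,\overline{j_1+j_2+l}}$. If some indecomposable summand $M_t$ of $\S_{2i_1,j_1}\otimes\S_{2i_2,j_2}$ were not simple, then as a submodule of this direct sum of projectives it would contain a Verma submodule; but every Verma submodule of $\bigoplus_l\P_{n^2+2i_1-2i_2-2l,\overline{j_1+j_2+l}}$ has a $\k^{-1}\hat\k,\k\hat\k^{n/2}$-eigenvector of height at most $n^2-2i_2$, whereas every homogeneous vector in $\S_{2i_1,j_1}\otimes\S_{2i_2,j_2}$ has height at least $2i_1-1$. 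The hypothesis $2i_1-1\geq n^2-2i_2+1$ is precisely what makes this a contradiction. So the missing step is not combinatorial bookkeeping but this structural use of the height grading to exclude non-simple summands; once you add it, your argument coincides with the paper's.
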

\begin{proof} The notation introduced in the proof of Theorem \ref{t4.2} will be used freely.
Since $\Soc(\P_{2i_{1},j_{1}})=\S_{2i_{1},j_{1}}$, we have a natural embedding:
$$\S_{2i_{1},j_{1}}\otimes \S_{2i_{2},j_{2}}\hookrightarrow \P_{2i_{1},j_{1}}\otimes \S_{2i_{2},j_{2}}\cong
\bigoplus_{l=0}^{n^{2}-2i_{2}}\P_{n^{2}+2i_{1}-2i_{2}-2l,\overline{j_{1}+j_{2}+l}}.$$
Thanks to the embedding, we have the equation:
\begin{equation}\label{eq;4.1}y^{2i_{1}-1}\eta(\S_{2i_{1},j_{1}})\eta(\S_{2i_{2},j_{2}})=
\sum_{l=0}^{n^2-2i_{2}}y^{l}\eta(\S_{n^{2}+2i_{1}-2i_{2}-2l,\overline{j_{1}+j_{2}+l}}).\end{equation}
It follows that
$$[\S_{2i_{1},j_{1}}\otimes \S_{2i_{2},j_{2}}]=[\bigoplus_{l=0}^{n^{2}-2i_{2}}\S_{n^{2}+2i_{1}-2i_{2}-2l,\overline{j_{1}+j_{2}+l}}]$$
in $K_{0}$. Assume that $\S_{2i_{1},j_{1}}\otimes \S_{2i_{2},j_{2}}=\bigoplus_{t}M_{t}$ is the decomposition of
$\S_{2i_{1},j_{1}}\otimes \S_{2i_{2},j_{2}}$ into indecomposables. So it is enough to show that every $M_{t}$ is simple. Otherwise, there would be
a $M_{t}$ containing at least two simples as composition factors.  Since $M_{t}$ is a submodule of
$\bigoplus_{l=0}^{n^{2}-2i_{2}}\P_{n^{2}+2i_{1}-2i_{2}-2l,\overline{j_{1}+j_{2}+l}}$, $M_{t}$ contains a Verma submodule. But every Verma submodule of $\bigoplus_{l=0}^{n^{2}-2i_{2}}\P_{n^{2}+2i_{1}-2i_{2}-2l,\overline{j_{1}+j_{2}+l}}$ must contain
a $\k^{-1}\hat{\k},\k\hat{\k}^{\frac{n}{2}}$-eigenvector $v$ with height $\leq n^{2}-2i_{2}$. This contradicts to Equation
\eqref{eq;4.1} since the height of any $\k^{-1}\hat{\k},\k\hat{\k}^{\frac{n}{2}}$-eigenvector in $\S_{2i_{1},j_{1}}\otimes \S_{2i_{2},j_{2}}$
 is at least $2i_{1}-1$ which is bigger than $n^{2}-2i_{2}$.
\end{proof}

As a consequence of Lemma \ref{lem4.4}, we obtain the following.

\begin{corollary}\label{c4.7} For $0\leq j_{1},j_{2}\leq 1$ and $2\leq i\leq \frac{n^{2}}{2}-1$, we have:
$$\S_{2i,j_{1}}\otimes \S_{n^{2}-2,j_{2}}\cong \S_{2i+2,\overline{j_{1}+j_{2}}}\oplus \S_{2i,\overline{j_{1}+j_{2}+1}}\oplus
\S_{2i-2,\overline{j_{1}+j_{2}}}.$$
\end{corollary}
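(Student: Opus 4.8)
The plan is to obtain the statement as a direct specialization of Lemma \ref{lem4.4}. First I would set $i_{1}=i$ and $i_{2}=\frac{n^{2}}{2}-1$, so that $2i_{2}=n^{2}-2$ and the second tensor factor $\S_{2i_{2},j_{2}}$ becomes exactly the module $\S_{n^{2}-2,j_{2}}$ appearing in the corollary, while the first factor $\S_{2i_{1},j_{1}}$ is $\S_{2i,j_{1}}$.

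Next I would check that the two hypotheses of Lemma \ref{lem4.4} hold under the standing assumption $2\leq i\leq \frac{n^{2}}{2}-1$. The condition $i_{1}\leq i_{2}$ reads $i\leq \frac{n^{2}}{2}-1$, which is precisely the upper bound on $i$; the condition $2i_{1}-1\geq n^{2}-2i_{2}+1$ simplifies, using $2i_{2}=n^{2}-2$, to $2i-1\geq 3$, that is $i\geq 2$, which is the lower bound on $i$. Hence Lemma \ref{lem4.4} applies verbatim, and I would then substitute into its conclusion. Since $n^{2}-2i_{2}=2$, the direct sum runs over $l=0,1,2$, and the first subscript of the $l$-th summand is $n^{2}+2i_{1}-2i_{2}-2l=2i+2-2l$. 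This yields $\S_{2i+2,\overline{j_{1}+j_{2}}}$ for $l=0$, then $\S_{2i,\overline{j_{1}+j_{2}+1}}$ for $l=1$, and finally $\S_{2i-2,\overline{j_{1}+j_{2}+2}}$ for $l=2$.

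The only point that deserves a word of justification is the second subscript of the last summand: since $\overline{\,\cdot\,}$ denotes the residue modulo $2$, one has $\overline{j_{1}+j_{2}+2}=\overline{j_{1}+j_{2}}$, so this summand is $\S_{2i-2,\overline{j_{1}+j_{2}}}$, matching the asserted formula exactly. There is essentially no obstacle here: the entire content is carried by Lemma \ref{lem4.4}, and the remaining work reduces to confirming the range of $l$ and the harmless reduction of the $j$-index modulo $2$.
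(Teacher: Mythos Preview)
Your proposal is correct and follows exactly the approach indicated in the paper: the corollary is stated there simply as a consequence of Lemma \ref{lem4.4}, and your specialization $i_{1}=i$, $i_{2}=\frac{n^{2}}{2}-1$ together with the verification of the two hypotheses and the reduction $\overline{j_{1}+j_{2}+2}=\overline{j_{1}+j_{2}}$ is precisely what is needed.
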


\begin{lemma}\label{l4.5}  For $0\leq j_{1},j_{2}\leq 1$, we have:
\begin{eqnarray*}
\S_{n^{2},j_{1}}\otimes \S_{n^{2}-2,j_{2}}&\cong& \S_{n^{2}-2,\overline{j_{1}+j_{2}}}\\
\S_{2,j_{1}}\otimes \S_{n^{2}-2,j_{2}}&\cong &\S_{4,\overline{j_{1}+j_{2}}}\oplus \P_{2,\overline{j_{1}+j_{2}+1}}.
\end{eqnarray*}
\end{lemma}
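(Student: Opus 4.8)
The plan is to treat these two isomorphisms as special or boundary cases of the general method developed in the proof of Theorem \ref{t4.2} and Lemma \ref{lem4.4}, namely the bookkeeping via the formal height-and-eigenvalue function $\eta$. First I would observe that $\S_{n^2,j_1}$ is one-dimensional: since $\dim \S_{2i,j}=n^2-2i+1$, taking $2i=n^2$ gives dimension $1$. Thus $\S_{n^2,j_1}\otimes \S_{n^2-2,j_2}$ has dimension exactly $\dim \S_{n^2-2,j_2}=3$, and tensoring by a one-dimensional module can only twist the eigenvalues and shift the height. Reading off the $\k^{-1}\hat\k$- and $\k\hat\k^{n/2}$-eigenvalues of the generator of $\S_{n^2,j_1}$ from Corollary \ref{c2.1} (with $i=\tfrac{n^2}{2}$), one finds the twist sends $\S_{n^2-2,j_2}$ to $\S_{n^2-2,\overline{j_1+j_2}}$; equivalently, the equation $\eta(\S_{n^2,j_1})\eta(\S_{n^2-2,j_2})$ collapses to a single homogeneous $\eta(\S_{n^2-2,\overline{j_1+j_2}})$-term up to the overall height factor. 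This disposes of the first isomorphism.

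For the second isomorphism I would apply the same $\eta$-calculus used in Lemma \ref{lem4.4}, but now the hypotheses of that lemma fail: with $2i_1=2$ we have $2i_1-1=1$, which is \emph{not} $\geq n^2-2i_2+1=3$. So I cannot quote Lemma \ref{lem4.4} directly and must instead run the embedding-and-deletion method from scratch for this case. Concretely, I would first compute the projective tensor product $\P_{2,j_1}\otimes \S_{n^2-2,j_2}$ using the appropriate part of Theorem \ref{t4.2} (here $2i_1=2,\,2i_2=n^2-2$, so $i_1=1\leq i_2$ and $2i_1-1=1<n^2-2i_2+1=3$, placing us in Part (c)), and then use $\Soc(\P_{2,j_1})=\S_{2,j_1}$ to embed $\S_{2,j_1}\otimes \S_{n^2-2,j_2}\hookrightarrow \P_{2,j_1}\otimes \S_{n^2-2,j_2}$. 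The $\eta$-equation analogous to \eqref{eq;4.1} then pins down the composition factors of $\S_{2,j_1}\otimes \S_{n^2-2,j_2}$ in $K_0$.

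The genuinely new feature, and the main obstacle, is that unlike in Lemma \ref{lem4.4} the answer here is \emph{not} a direct sum of simples: a projective summand $\P_{2,\overline{j_1+j_2+1}}$ appears. Thus the argument of Lemma \ref{lem4.4}, which ruled out non-simple indecomposable summands by a height estimate on Verma submodules, must be modified rather than repeated. The hard part will be showing that the tensor product genuinely contains a projective indecomposable summand $\P_{2,\overline{j_1+j_2+1}}$ and not merely an indecomposable with those composition factors: I would exhibit an explicit highest-height $\k^{-1}\hat\k,\k\hat\k^{n/2}$-eigenvector $v$ in $\S_{2,j_1}\otimes \S_{n^2-2,j_2}$ whose generated submodule realizes a full copy of the Verma/projective $\P_{2,\overline{j_1+j_2+1}}$ (equivalently, verify that $F$ does not annihilate the bottom of this string, so that the socle $\S_{2,\cdot}$ is attained and the summand is injective, hence projective, of dimension $2n^2$). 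Once the projective summand is split off, the complementary summand has dimension $3n^2-2n^2=n^2$ matching $\dim\S_{4,\overline{j_1+j_2}}=n^2-3$—so I would recheck the dimension arithmetic ($\dim\S_{2,j_1}\otimes\S_{n^2-2,j_2}=(n^2-1)\cdot 3=3n^2-3$, while $\dim\P_{2,\cdot}+\dim\S_{4,\cdot}=2n^2+(n^2-3)=3n^2-3$), which confirms the stated decomposition and leaves only the simplicity of the residual $\S_{4,\overline{j_1+j_2}}$ summand to verify via the $\eta$-identity and a height argument as in Lemma \ref{lem4.4}.
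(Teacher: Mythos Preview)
Your treatment of the first isomorphism matches the paper's (the paper simply says ``clear''), and your setup for the second via the embedding $\S_{2,j_1}\otimes\S_{n^2-2,j_2}\hookrightarrow\P_{2,j_1}\otimes\S_{n^2-2,j_2}$ and the $K_0$-identity is exactly what the paper does. Where you diverge is in how to upgrade the $K_0$-equality $[\S_{2,j_1}\otimes\S_{n^2-2,j_2}]=[\S_{4,\overline{j_1+j_2}}]+[\P_{2,\overline{j_1+j_2+1}}]$ to an actual module decomposition.

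You propose to produce an explicit highest-height eigenvector generating a copy of $\P_{2,\overline{j_1+j_2+1}}$ inside the tensor product. In principle this works, since projectives are injective here and any projective submodule splits off, but carrying it out means computing with the rather unpleasant comultiplication formulas for $E$ and $F$ and verifying a non-annihilation statement at the bottom of a length-$(2n^2)$ string. The paper avoids this computation completely with a two-step structural argument: first, $\S_{4,\overline{j_1+j_2}}$ and $\P_{2,\overline{j_1+j_2+1}}$ lie in \emph{different blocks} (Corollary~\ref{Cor2.14}(e)), so the $\S_{4,\cdot}$-composition factor is automatically a direct summand, giving $\S_{2,j_1}\otimes\S_{n^2-2,j_2}\cong\S_{4,\overline{j_1+j_2}}\oplus M$ with $[M]=[\P_{2,\overline{j_1+j_2+1}}]$. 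Second, from Theorem~\ref{t4.2}(c) one has $\P_{2,j_1}\otimes\S_{n^2-2,j_2}\cong\P_{4,\overline{j_1+j_2}}\oplus 2\P_{2,\overline{j_1+j_2+1}}$; passing to the quotient $\S_{2,j_1}\otimes\S_{n^2-2,j_2}$ and again using the block separation gives $2\P_{2,\overline{j_1+j_2+1}}\twoheadrightarrow M$. A short projective-cover/radical argument then forces $\P_{2,\overline{j_1+j_2+1}}\twoheadrightarrow M$, hence $M\cong\P_{2,\overline{j_1+j_2+1}}$ by dimension. This is cleaner: block theory replaces your height argument for the simple summand, and the radical argument replaces your explicit vector construction for the projective one.
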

\begin{proof} The first isomorphism is clear and we prove the second one. Using the same method demonstrated in the proof of Lemma \ref{lem4.4},
we obtain:
$$[\S_{2,j_{1}}\otimes \S_{n^{2}-2,j_{2}}]=[\S_{4,\overline{j_{1}+j_{2}}}]+[\P_{2,\overline{j_{1}+j_{2}+1}}]$$
in $K_{0}$. Since $\S_{4,\overline{j_{1}+j_{2}}}$ and $\P_{2,\overline{j_{1}+j_{2}+1}}$ belong to different blocks,
we know that $\S_{4,\overline{j_{1}+j_{2}}}$ is a direct summand of $\S_{2,j_{1}}\otimes \S_{n^{2}-2,j_{2}}$.
That is,
$$\S_{2,j_{1}}\otimes \S_{n^{2}-2,j_{2}}\cong \S_{4,\overline{j_{1}+j_{2}}}\oplus M$$
with $[M]=[\P_{2,\overline{j_{1}+j_{2}+1}}]$. By Theorem \ref{t4.2} (c), we have:
$$\P_{4,\overline{j_{1}+j_{2}}}\oplus 2\P_{2,\overline{j_{1}+j_{2}+1}}\cong \P_{2,j_{1}}\otimes \S_{n^{2}-2,j_{2}}\twoheadrightarrow
\S_{2,j_{1}}\otimes \S_{n^{2}-2,j_{2}}\cong \S_{4,\overline{j_{1}+j_{2}}}\oplus M.$$
Since $\P_{4,\overline{j_{1}+j_{2}}}$ and $\P_{2,\overline{j_{1}+j_{2}+1}}$ belong to different block, we obtain:
$$2\P_{2,\overline{j_{1}+j_{2}+1}}\twoheadrightarrow M.$$
If $\P_{2,\overline{j_{1}+j_{2}+1}}\not\twoheadrightarrow M$, then $M/JM\cong 2\P_{2,\overline{j_{1}+j_{2}+1}}/J(2\P_{2,\overline{j_{1}+j_{2}+1}})$.
Here $J$ denotes the Jacobson radical of $\Quslz$. Thus
$\dim M/JM=2(n^{2}-1)$ implying
 $\S_{n^{2},\overline{j_{1}+j_{2}}}\subseteq \Soc(M)$.
This is impossible since we also have $\Soc(M)\hookrightarrow 2\P_{2,\overline{j_{1}+j_{2}+1}}$. So
$\P_{2,\overline{j_{1}+j_{2}+1}}\twoheadrightarrow M$. It follows that $\P_{2,\overline{j_{1}+j_{2}+1}}\cong M$ since $\dim \P_{2,\overline{j_{1}+j_{2}+1}}=\dim M$.
\end{proof}

We obtain the following basic observation, which is consistent with the classical $\uqslz$
and $\mathfrak{U}_{q}(\mathfrak{sl}_{2})$ cases.

\begin{proposition}\label{p4.6} Let $1\leq i_{1},i_{2}\leq \frac{n^{2}}{2}$
 be two positive integers and $0\leq j_{1},j_{2}\leq 1$. The indecomposable direct summands of $\S_{2i_{1},j_{1}}\otimes \S_{2i_{2},j_{2}}$
 are either simple or projective.
\end{proposition}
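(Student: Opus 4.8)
The plan is to exploit the fact, recalled just before Theorem \ref{t4.2}, that a tensor product with a projective is again projective. Hence $P:=\P_{2i_1,j_1}\otimes\S_{2i_2,j_2}$ is projective and, by Theorem \ref{t4.2}, splits as an explicit direct sum $\bigoplus_s\P_{\bullet_s}$ of indecomposable projectives. Using $\S_{2i_1,j_1}=\Soc(\P_{2i_1,j_1})$, I would tensor the socle inclusion by $\S_{2i_2,j_2}$ to embed $N:=\S_{2i_1,j_1}\otimes\S_{2i_2,j_2}\hookrightarrow P$, and tensor the canonical surjection onto the top, $\P_{2i_1,j_1}\twoheadrightarrow\S_{2i_1,j_1}$, to realize $N$ also as a quotient of $P$. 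The problem then reduces to showing that every indecomposable summand $M$ of $N$ — being simultaneously a submodule and a homomorphic image of $\bigoplus_s\P_{\bullet_s}$, where each $\P_{\bullet_s}$ has, by Corollary \ref{Cor2.14}, a simple socle that coincides with its simple top — is either simple or projective.

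First I would bring in the height grading. By Lemma \ref{l4.1} the coproduct preserves heights, so $N$ is a height-graded module whose $\k^{-1}\hat{\k},\k\hat{\k}^{\frac{n}{2}}$-eigenvectors occupy a window of width at most $(n^2-2i_1)+(n^2-2i_2)$. Since $M\subseteq N$ inherits this bound, the long string and tube modules $V^{2i,j}_l,\widetilde V^{2i,j}_l,W^{2i,j}_l,\widetilde W^{2i,j}_l,T^{2i,j}_l(\lambda)$ of Theorem \ref{thm3.1} with $l$ large are excluded at once, leaving only finitely many candidate shapes for $M$. For these I would run the \emph{method} described before Theorem \ref{t4.2}: the $\k^{-1}\hat{\k},\k\hat{\k}^{\frac{n}{2}}$-eigenvector of lowest height in $M$ generates, inside the ambient $\bigoplus_s\P_{\bullet_s}$, either the simple socle $\S_c$ or a Verma submodule $\Quslz\alpha_c$. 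Whenever the peeling produces a full indecomposable projective $\P_c\subseteq M$, injectivity of $\P_c$ makes it split off, and indecomposability of $M$ then forces $M\cong\P_c$.

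The decisive, and I expect hardest, step is to rule out the residual non-simple, non-projective candidates: the length-two Verma and co-Verma submodules, together with the radical $J\P_c$. These are genuinely both submodules and quotients of $\bigoplus_s\P_{\bullet_s}$, so they cannot be discarded from the sub/quotient structure alone, and the argument must become quantitative. The plan is to record the generating function $\eta(N)=\eta(\S_{2i_1,j_1})\,\eta(\S_{2i_2,j_2})$ and to match it against $\sum_t\eta(M_t)$ along the four regimes of Theorem \ref{t4.2}, exactly in the spirit of Lemma \ref{lem4.4} and Lemma \ref{l4.5}. In the regimes where the height of every eigenvector of $N$ exceeds $n^2-2i_2$ (as when $2i_1-1\geq n^2-2i_2+1$ and $i_1\leq i_2$), a Verma summand would require a low-height eigenvector that the generating function forbids, so $M$ is simple; in the complementary regimes the same accounting should instead show that any low-lying Verma is obliged to close up into a complete projective, returning $M\cong\P_c$. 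Carrying out this height and eigenvalue bookkeeping so that no Verma, co-Verma, or tube summand survives in any of the four cases is where the real work lies, and it is exactly the computation begun in Lemmas \ref{lem4.4} and \ref{l4.5}.
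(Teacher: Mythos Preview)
Your approach is \emph{different} from the paper's and considerably more laborious. The paper does not attempt any case-by-case height bookkeeping here. Instead it observes (via Corollary~\ref{c4.7}) that every simple $\S_{2i,j}$ occurs as a direct summand of a suitable tensor power of the three-dimensional simple $\S_{n^{2}-2,0}$ (together with the one-dimensional $\S_{n^{2},1}$). Hence $\S_{2i_1,j_1}\otimes\S_{2i_2,j_2}$ is a direct summand of some iterated tensor power of $\S_{n^{2}-2,j}$'s. Now one argues by induction on the number of tensor factors: Corollary~\ref{c4.7} says that $\S_{2i,j}\otimes\S_{n^{2}-2,j'}$ is a sum of simples for $2\le i\le\frac{n^{2}}{2}-1$, Lemma~\ref{l4.5} handles the two boundary values $i=\frac{n^{2}}{2}$ and $i=1$ (the latter producing a projective summand), and tensoring a projective with anything remains projective. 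Thus every tensor power of $\S_{n^{2}-2,j}$ is a direct sum of simples and projectives, and any direct summand of such a sum is again simple or projective. That is the entire proof.

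Your plan, by contrast, amounts to proving Theorem~\ref{thm4.8} directly in all four regimes and then reading off Proposition~\ref{p4.6} as a corollary --- the reverse of the paper's logical order. This is not wrong in principle, but you explicitly leave the decisive step (``ruling out the residual non-simple, non-projective candidates'' in cases~(b)--(d)) as future work, and you should be aware that the generating function $\eta$ alone does not distinguish a Verma module from the direct sum of its two composition factors, so the bookkeeping must be supplemented by structural arguments of the kind in Lemma~\ref{l4.5} (block separation, surjection from a single $\P_c$). The paper's inductive trick sidesteps all of this: once you know the result for tensoring with the generating three-dimensional simple, the general case is free.
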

\begin{proof} By Corollary \ref{c4.7}, both $\S_{2i_{1},j_{1}}$ and $\S_{2i_{2},j_{2}}$ occur as direct summands of suitable tensor
powers of $\S_{n^{2}-2,j}$ for $j=0$ or $j=1$. Thus by combining Corollary \ref{c4.7} and Lemma \ref{l4.5}, we get the desired conclusion.
\end{proof}

Combing Theorem \ref{t4.2} with Proposition \ref{p4.6}, we obtain the following refined decomposition rules:
\begin{theorem}\label{thm4.8} For $1\leq i_{1},i_{2}\leq \frac{n^{2}}{2}$
 and $0\leq j_{1},j_{2}\leq 1$, we have:

 \emph{(a)} If $2i_{1}-1\geq n^{2}-2i_{2}+1$ and $i_{1}\leq i_{2}$, then
 $$\S_{2i_{1},j_{1}}\otimes \S_{2i_{2},j_{2}}\cong \bigoplus_{l=0}^{n^{2}-2i_{2}}\S_{n^{2}+2i_{1}-2i_{2}-2l,\overline{j_{1}+j_{2}+l}}.$$

 \emph{(b)} If $2i_{1}-1\geq n^{2}-2i_{2}+1$ and $i_{1}> i_{2}$, then
$$\S_{2i_{1},j_{1}}\otimes \S_{2i_{2},j_{2}}
\cong \bigoplus_{l=0}^{n^{2}-2i_{1}}\S_{n^{2}-2i_{1}+2i_{2}-2l,\overline{j_{1}+j_{2}+l}}.$$

 \emph{(c)} If $2i_{1}-1< n^{2}-2i_{2}+1$ and $i_{1}\leq i_{2}$, then
 \begin{eqnarray*}&&\S_{2i_{1},j_{1}}\otimes \S_{2i_{2},j_{2}}\\
 &&\cong \bigoplus_{l=0}^{2i_{1}-2}\S_{n^{2}+2i_{1}-2i_{2}-2l,\overline{j_{1}+j_{2}+l}}\oplus
 \bigoplus_{l=0}^{\frac{n^{2}}{2}-i_{1}-i_{2}}\P_{n^{2}-2i_{1}-2i_{2}+2-2l,\overline{j_{1}+j_{2}+l-1}}.\end{eqnarray*}

  \emph{(d)} If $2i_{1}-1< n^{2}-2i_{2}+1$ and $i_{1}> i_{2}$, then
\begin{eqnarray*}&&\P_{2i_{1},j_{1}}\otimes \S_{2i_{2},j_{2}}\\
&&\cong \bigoplus_{l=0}^{2i_{2}-2}\S_{n^{2}-2i_{1}+2i_{2}-2l,\overline{j_{1}+j_{2}+l}}
\oplus \bigoplus_{l=0}^{\frac{n^{2}}{2}-i_{1}-i_{2}}\P_{n^{2}-2i_{1}-2i_{2}+2-2l,\overline{j_{1}+j_{2}+l-1}}. 
\end{eqnarray*}
\end{theorem}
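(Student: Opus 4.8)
The plan is to prove Theorem \ref{thm4.8} by combining the projective-level decomposition of Theorem \ref{t4.2} with the structural input of Proposition \ref{p4.6}, so that the only real work is to read off, within each projective summand, which part contributes a \emph{simple} direct summand and which part contributes a genuine \emph{projective} summand. Throughout I would use the formal height function $\eta$ introduced in the proof of Theorem \ref{t4.2}, since it tracks heights together with the $\k^{-1}\hat{\k}$- and $\k\hat{\k}^{\frac{n}{2}}$-eigenvalues simultaneously and is exactly the bookkeeping device needed to separate simple from projective contributions.

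The strategy runs case by case, matching the four cases (a)--(d) of Theorem \ref{t4.2}. For each case I would write down the Grothendieck-class identity $[\S_{2i_{1},j_{1}}\otimes \S_{2i_{2},j_{2}}]$ exactly as in Lemma \ref{lem4.4}: embed $\S_{2i_{1},j_{1}}\otimes \S_{2i_{2},j_{2}}$ into $\P_{2i_{1},j_{1}}\otimes \S_{2i_{2},j_{2}}$ via the inclusion $\Soc(\P_{2i_{1},j_{1}})=\S_{2i_{1},j_{1}}$, and use the factor $y^{2i_{1}-1}\eta(\S_{2i_{1},j_{1}})$ in place of $\eta(\P_{2i_{1},j_{1}})$. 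In case (a) the product has no $y^{n^{2}}$-factor, every projective $\P$ in Theorem \ref{t4.2}(a) gives back its simple socle, and Lemma \ref{lem4.4} already delivers the answer verbatim. Case (b) is the crucial step: by Proposition \ref{p4.6} every summand is simple or projective, and the height argument of Lemma \ref{lem4.4} (no eigenvector of height $\le n^{2}-2i_{2}$ can occur, since the minimal height in the tensor product is $2i_{1}-1$) forces the $(1+y^{n^{2}})\sum_{l}y^{l}\eta(\P_{2i_{1}-2i_{2}-2l,\cdots})$ block in Theorem \ref{t4.2}(b) to cancel entirely, leaving only the simples from the second sum $\bigoplus_{l=0}^{n^{2}-2i_{1}}\S_{n^{2}-2i_{1}+2i_{2}-2l,\overline{j_{1}+j_{2}+l}}$. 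In cases (c) and (d) the same height comparison shows that the $(1+y^{n^{2}})$-type and doubled-projective blocks coming from $E$-tops survive as honest projective summands $\P_{n^{2}-2i_{1}-2i_{2}+2-2l,\overline{j_{1}+j_{2}+l-1}}$, whereas the single-$\P$ block produces simple socles $\S_{n^{2}+2i_{1}-2i_{2}-2l}$ (resp. $\S_{n^{2}-2i_{1}+2i_{2}-2l}$); the key is that these two families lie in different blocks, exactly as exploited in Lemma \ref{l4.5}, so one may split off the projective summands by the block decomposition and then identify the complementary simple summands by dimension count against the class identity.

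The main obstacle I anticipate is in cases (c) and (d), where the tensor product genuinely mixes simple and projective summands, and I must justify that the projective summands split off as \emph{modules} rather than merely in $K_{0}$. Here I would argue precisely as in Lemma \ref{l4.5}: the projective family $\P_{n^{2}-2i_{1}-2i_{2}+2-2l,\cdots}$ and the simple family $\S_{n^{2}+2i_{1}-2i_{2}-2l,\cdots}$ sit in distinct blocks, so $\S_{2i_{1},j_{1}}\otimes \S_{2i_{2},j_{2}}$ decomposes as a direct sum along blocks; in each block the class in $K_{0}$ is known, and one checks (by the dimension identity $\dim\P=2n^{2}$ against $\dim\S=n^{2}-2i+1$, together with the surjection from $\P_{2i_{1},j_{1}}\otimes \S_{2i_{2},j_{2}}$) that the block component is forced to be the single projective or the single simple predicted. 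The remaining routine point is to confirm the index ranges and the parity labels $\overline{j_{1}+j_{2}+l}$ line up with Theorem \ref{t4.2}, which is a direct transcription once the cancellation pattern above is established.
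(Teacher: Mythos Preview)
Your overall strategy matches the paper's: the theorem is obtained by combining Theorem~\ref{t4.2} with Proposition~\ref{p4.6}, and your handling of cases (a) and (b) via the height argument of Lemma~\ref{lem4.4} is correct (the hypothesis $2i_{1}-1\geq n^{2}-2i_{2}+1$ is exactly what forbids a Verma submodule, so every indecomposable summand is simple and the $K_{0}$ class finishes the job).

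There is, however, a genuine gap in your treatment of cases (c) and (d). Your key assertion --- that the simple family $\S_{n^{2}+2i_{1}-2i_{2}-2l,\,\overline{j_{1}+j_{2}+l}}$ and the projective family $\P_{n^{2}-2i_{1}-2i_{2}+2-2l,\,\overline{j_{1}+j_{2}+l-1}}$ always lie in distinct blocks --- is false in general. Take $n^{2}=16$, $i_{1}=2$, $i_{2}=3$ (which is case (c)). Then the simple summand for $l=2$ is $\S_{10,\,\overline{j_{1}+j_{2}}}$, while the projective summand for $l=0$ is $\P_{8,\,\overline{j_{1}+j_{2}+1}}$. By Corollary~\ref{Cor2.14}(e), $\P_{8,a}$ and $\P_{10,1-a}$ lie in the same block (since $8+10=n^{2}+2$), so $\S_{10,\,\overline{j_{1}+j_{2}}}$ and $\P_{8,\,\overline{j_{1}+j_{2}+1}}$ are in the \emph{same} block. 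The block separation you invoke from Lemma~\ref{l4.5} works there only because $i_{1}=1$ forces a single simple $\S_{4,\ast}$ and a single projective $\P_{2,\ast}$, which happen to be block-disjoint; this is an accident of that case and does not generalise.

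Once the block argument fails, the $K_{0}$ class together with Proposition~\ref{p4.6} no longer pins down the decomposition: in the example above, $[\S_{10,\ast}]+[\P_{8,\ast'}]=3[\S_{10,\ast}]+2[\S_{8,\ast'}]$ is also realised by $\S_{10,\ast}\oplus\P_{10,\ast}$ or by $3\S_{10,\ast}\oplus 2\S_{8,\ast'}$. You need an extra constraint. One clean fix is a socle count: the embedding $\S_{2i_{1},j_{1}}\otimes\S_{2i_{2},j_{2}}\hookrightarrow \P_{2i_{1},j_{1}}\otimes\S_{2i_{2},j_{2}}$ forces $\Soc(\S\otimes\S)\hookrightarrow\Soc(\P\otimes\S)$, and Theorem~\ref{t4.2} gives the right-hand socle explicitly; in the example this socle contains only one copy of $\S_{10,\ast}$, which rules out both spurious alternatives. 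Alternatively, one can run the induction implicit in the proof of Proposition~\ref{p4.6}: build every $\S_{2i,j}$ as a direct summand of an iterated tensor power of $\S_{n^{2}-2,\ast}$ via Corollary~\ref{c4.7}, and propagate the module-level decomposition step by step from Lemma~\ref{l4.5}.
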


Now we are ready to describe the Grothendieck ring $K_{0}$. Let $\mathbbm{Z}[g,x]$ be the polynomial algebra
over $\mathbbm{Z}$  in two variables $g$ and $x$. We define polynomials $f_{2m,j}\in \mathbbm{Z}[g,x]$ inductively for
$0\leq m\leq \frac{n^{2}}{2}-1,\;j=0,1$ as follows:
\begin{eqnarray*}
&&f_{0,0}=1,\;\;f_{0,1}=g,\;\;\;\;f_{2,0}=x,\;\;f_{2,1}=xg,\\
&&f_{4,0}=x^{2}-xg-1,\;\;f_{4,1}=x^{2}g-x-g.\\
\textrm{Assume} \;\;\;\;f_{2m,j}&=&x^{m}g^{j}-a_{1}f_{2(m-1),\overline{j+1}}-a_{2}f_{2(m-2),\overline{j+2}}-\cdots -a_{m}f_{0,\overline{j+m}}. 
\;\; \textrm{Define}\\
f_{2(m+1),j}&=&x^{m+1}g^{j}-(1+a_{1})f_{2m,\overline{j+1}}\\
&&-\sum_{l=1}^{m-1}(a_{l-1}+a_{l}+a_{l+1})f_{2(m-l),\overline{j+l+1}}-
a_{m-1}f_{0,\overline{j+m+1}},
\end{eqnarray*}
where $a_{0}=1$. Let $I$ be the ideal of  $\mathbbm{Z}[g,x]$ generated by $g^{2}-1$ and $f_{n^{2}-2,0}x-2f_{n^{2}-2,1}-f_{n^{2}-4,0}-2$.  We have
the following main result:

\begin{theorem}\label{thm4.9}  The Grothendieck ring $K_{0}$ of $\Quslz$ is isomorphic to the quotient ring
$\mathbbm{Z}[g,x]/I$.
\end{theorem}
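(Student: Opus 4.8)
The plan is to construct an explicit ring homomorphism $\Phi\colon\mathbbm{Z}[g,x]\to K_{0}$ and match the polynomials $f_{2m,j}$ with classes of simples. I would set $\Phi(g):=[\S_{n^{2},1}]$ and $\Phi(x):=[\S_{n^{2}-2,0}]$, where $[\,\cdot\,]$ denotes the class in $K_{0}$. Since $\mathbbm{Z}[g,x]$ is the free commutative ring on $g,x$, to get a ring map I only need these two classes to commute: by Lemma \ref{l4.5} and Theorem \ref{t4.2}(a) one checks $[\S_{n^{2},1}]\cdot[\S_{n^{2}-2,0}]=[\S_{n^{2}-2,1}]=[\S_{n^{2}-2,0}]\cdot[\S_{n^{2},1}]$, so $\Phi$ is well defined and its image is commutative. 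Note that $\S_{n^{2},0}\otimes\S_{n^{2},0}\cong\S_{n^{2},0}$ and $\dim\S_{n^{2},0}=1$ force $[\S_{n^{2},0}]$ to be the unit $1$ of $K_{0}$ (the trivial module). The intended dictionary is
\[
\Phi(f_{2m,j})=[\S_{n^{2}-2m,j}],\qquad 0\le m\le\tfrac{n^{2}}{2}-1,\ j=0,1,
\]
whose right-hand sides exhaust all $n^{2}$ simple classes.

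The core step is this dictionary, proved by induction on $m$. First I would rewrite the self-referential recursion defining the $f_{2m,j}$ in the clean three-term form
\[
f_{2(m+1),j}=x\,f_{2m,j}-f_{2(m-1),j}-f_{2m,\overline{j+1}},\qquad 1\le m\le\tfrac{n^{2}}{2}-2,
\]
and verify the base cases $f_{0,j},f_{2,j},f_{4,j}$ directly. On the module side, specialising Corollary \ref{c4.7} with second factor $\S_{n^{2}-2,0}$ (so the $\k\hat{\k}^{n/2}$-parities become $\overline{j}=j$ and $\overline{j+1}$) and reindexing $2i=n^{2}-2m$ gives exactly
\[
[\S_{n^{2}-2m,j}]\cdot[\S_{n^{2}-2,0}]=[\S_{n^{2}-2(m-1),j}]+[\S_{n^{2}-2m,\overline{j+1}}]+[\S_{n^{2}-2(m+1),j}]
\]
for $1\le m\le\frac{n^{2}}{2}-2$. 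Feeding the inductive hypothesis $\Phi(f_{2k,j})=[\S_{n^{2}-2k,j}]$ for $k\le m$ into the three-term recursion yields $\Phi(f_{2(m+1),j})=[\S_{n^{2}-2(m+1),j}]$, completing the induction and showing $\Phi$ is surjective.

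With the dictionary established, the two relations generating $I$ follow at once. For $g^{2}-1$, Theorem \ref{t4.2}(a) gives $\S_{n^{2},1}\otimes\S_{n^{2},1}\cong\S_{n^{2},0}$, hence $\Phi(g)^{2}=1$. For the second generator, the boundary rule Lemma \ref{l4.5} gives $\S_{2,0}\otimes\S_{n^{2}-2,0}\cong\S_{4,0}\oplus\P_{2,1}$, and the Loewy data in the proof of Corollary \ref{Cor2.14} give $[\P_{2,1}]=2[\S_{2,1}]+2[\S_{n^{2},0}]$; since $\Phi(f_{n^{2}-2,0})=[\S_{2,0}]$, $\Phi(f_{n^{2}-2,1})=[\S_{2,1}]$ and $\Phi(f_{n^{2}-4,0})=[\S_{4,0}]$, this reads $\Phi\big(f_{n^{2}-2,0}\,x-2f_{n^{2}-2,1}-f_{n^{2}-4,0}-2\big)=0$. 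Thus $\Phi(I)=0$ and $\Phi$ descends to $\overline{\Phi}\colon\mathbbm{Z}[g,x]/I\to K_{0}$.

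Finally I would conclude by a rank count. Each $f_{2m,j}$ is monic of degree $m$ in $x$ with leading coefficient $g^{j}$, so the relation $f_{n^{2}-2,0}\,x-2f_{n^{2}-2,1}-f_{n^{2}-4,0}-2$ is monic of degree $\frac{n^{2}}{2}$ in $x$ with leading coefficient $1$; together with $g^{2}-1$ it reduces every monomial, whence $\mathbbm{Z}[g,x]/I$ is generated as a $\mathbbm{Z}$-module by the $n^{2}$ elements $x^{a}g^{b}$ with $0\le a\le\frac{n^{2}}{2}-1$, $b=0,1$. As $K_{0}$ is $\mathbbm{Z}$-free of rank $n^{2}$ on the simple classes and $\overline{\Phi}$ is onto, the composite $\mathbbm{Z}^{n^{2}}\twoheadrightarrow\mathbbm{Z}[g,x]/I\xrightarrow{\overline{\Phi}}K_{0}\cong\mathbbm{Z}^{n^{2}}$ is a surjective endomorphism of $\mathbbm{Z}^{n^{2}}$, hence an isomorphism, forcing $\overline{\Phi}$ to be an isomorphism. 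The main obstacle is the inductive identification $\Phi(f_{2m,j})=[\S_{n^{2}-2m,j}]$: one must verify that the coefficient bookkeeping with the $a_{l}$ is genuinely equivalent to the clean three-term recursion, and that Corollary \ref{c4.7} governs every interior index while the projective correction intervenes only at the boundary product $\S_{2}\otimes\S_{n^{2}-2}$ (which is exactly what produces the second relation in $I$); tracking the parities $\overline{j_{1}+j_{2}+l}$ consistently is the step most prone to sign errors.
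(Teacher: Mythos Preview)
Your argument is correct and follows essentially the same route as the paper: define the ring map sending $g\mapsto[\S_{n^{2},1}]$ and $x\mapsto[\S_{n^{2}-2,0}]$, establish the dictionary $f_{2m,j}\mapsto[\S_{n^{2}-2m,j}]$ by induction using the Clebsch--Gordan rule of Corollary~\ref{c4.7}, verify that both generators of $I$ lie in the kernel, and then conclude injectivity. The only genuine difference is the final step: the paper constructs an explicit left inverse $\Psi$ (using that the monomials $[\S_{n^{2},1}]^{j}[\S_{n^{2}-2,0}]^{i}$ form a $\mathbbm{Z}$-basis of $K_{0}$, which follows from the unitriangularity of the $f_{2m,j}$), whereas you use the rank-count argument that a surjection $\mathbbm{Z}^{n^{2}}\to\mathbbm{Z}^{n^{2}}$ is automatically bijective. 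Both are valid; yours avoids writing down the inverse explicitly, while the paper's makes the basis of $K_{0}$ visible. Your explicit three-term rewriting of the recursion for $f_{2m,j}$ is in fact more transparent than the paper's terse ``using the first isomorphism in Lemma~\ref{l4.5} repeatedly'' (which really needs Corollary~\ref{c4.7} for $m\ge 2$).
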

\begin{proof} Define a ring map
\begin{eqnarray*}\Upsilon:\;\mathbbm{Z}[g,x]\To K_{0},
&&g\mapsto [\S_{n^{2},1}],\;\; x\mapsto [\S_{n^{2}-2,0}].
\end{eqnarray*}
The map $\Upsilon$ is well-defined since $K_{0}$ is commutative by Theorem \ref{thm4.8}. Using
the first isomorphism in Lemma \ref{l4.5} repeatedly, we obtain:
$$\Upsilon(f_{2m,j})=[\S_{n^{2}-2m,j}],$$
for $0\leq m\leq \frac{n^{2}}{2}-1,\;j=0,1$. It follows that $\Upsilon$ is surjective. Now $\S_{n^{2},1}\otimes \S_{n^{2},1}=\S_{n^{2},0}$ implies that
$\Upsilon(g^{2}-1)=0$. Applying the second isomorphism in Lemma \ref{l4.5}, we obtain $\Upsilon(f_{n^{2}-2,0}x-2f_{n^{2}-2,1}-f_{n^{2}-4,0}-2)=0$.
Therefore, $\Upsilon$ induces an ephimorphism of rings:
$$\bar{\Upsilon}:\;\;\mathbbm{Z}[g,x]/I\twoheadrightarrow  K_{0}.$$
It remains to show that $\Upsilon$ is injective. For convenience, we denote the generators of $\mathbbm{Z}[g,x]/I$  still by $g$ and $x$. Observe that
 $\{[\S_{n^{2},1}]^{j}[\S_{n^{2}-2,0}]^{i}|0\leq j\leq 1, 0\leq i\leq \frac{n^{2}}{2}-1\}$ is a $\mathbbm{Z}$-basis
of $K_{0}$. From this, we can define a $\mathbbm{Z}$-linear map:
\begin{eqnarray*}\Psi:\;K_0\To \mathbbm{Z}[g,x]/I,
&&[\S_{n^{2},1}]^{j}[\S_{n^{2}-2},0]^{i}\mapsto g^{j}x^{i}.
\end{eqnarray*}
It is not hard to check that $\Psi\bar{\Upsilon}=\id$. Hence, $\bar{\Upsilon}$ is injective.
\end{proof}

\section*{Acknowledgments}
The first author would like thank the Department of Mathematics, the University
 of Antwerp for its hospitality during his visiting in 2013.
 The work is supported by the NSF of China (No. 11371186) and a grant in the framework of an FWO project.

\end{document}